\numberwithin{equation}{section}
\theoremstyle{plain}
\newtheorem{theorem}{Theorem}[section]
\newtheorem{lemma}[theorem]{Lemma}
\newtheorem{predl}[theorem]{Proposition}
\newtheorem{corollary}[theorem]{Corollary}
\newtheorem{conjecture}[theorem]{Conjecture}
\theoremstyle{definition}
\newtheorem{definition}[theorem]{Definition}
\newtheorem{remark}[theorem]{Remark}
\newtheorem{example}[theorem]{Example}
\newcommand{\R}{\mathbb R}
\newcommand{\Z}{\mathbb Z}
\renewcommand{\P}{\mathbb P}
\newcommand{\D}{\mathcal D}
\newcommand{\EE}{\mathcal E}
\renewcommand{\O}{\mathcal O}
\renewcommand{\k}{\mathsf k}
\newcommand{\coh}{\mathrm{coh}}
\newcommand{\xra}{\xrightarrow}
\renewcommand{\le}{\leqslant}
\renewcommand{\ge}{\geqslant}
\DeclareMathOperator{\Hom}{\textup{Hom}}
\DeclareMathOperator{\Pic}{\mathrm{Pic}}
\DeclareMathOperator{\rank}{\mathrm{rank}}
\begin{document}

\title{Note on Applications of Toric Systems on surfaces}

\author{shizhuo ZHANG}
\thanks{Indiana University}
\address{Department of Mathematics, Indiana University, 831 E. Third St., Bloomington, IN 47405, USA}
\email{zhang398@umail.iu.edu}
\maketitle

\begin{abstract}
  In this note we apply the techniques of the toric systems introduced by Hille-Perling to several problems on smooth projective surfaces: We showed that the existence of full exceptional collection of line bundles implies the rationality for small Picard rank surfaces; we proved equivalences of several notions of cyclic strong exceptional collection of line bundles; we also proposed a partial solution to a conjecture on exceptional sheaves on weak del Pezzo surfaces.
\end{abstract}  

\tableofcontents
  
\section{Introduction}
In this work, we study exceptional collection of line bundles on surfaces. Let $X$ be a smooth projective surface over an algebraically closed field $k$ of characteristic $0$. An object $\mathcal{E}$ in the derived category $D^b(X)$ is called exceptional if 
$$Hom_{D^b(X)}(\mathcal{E},\mathcal{E}[i])=
\begin{cases}
k\quad\text{for}\quad i=0,\\
0\quad\text{for}\quad i\neq 0.
\end{cases}
$$

A collection of line bundles $(E_1,E_2,\ldots, E_n)$ in the derived category $D^b(X)$ form a numerically exceptional collection of line bundles of maximal length if:\begin{enumerate}
    \item Each line bundle $E_i$ is numerically exceptional: $\chi(E_i,E_i)=1$
    \item $\chi(E_i,E_j)=0$ for all $i>j$.
    
The numerically exceptional collection of line bundles is of maximal length if 
\item $n=rank(K_0^{num}(X))=rank(N^1(X))+2$, where $K_0^{num}(X)$ is the numerical Grothendieck group and $N^1(X)$ is numerical Picard group of $X$. 
\end{enumerate}

An exceptional collection of line bundles on $X$ is a numerically exceptional collection of line bundles $(E_1,\ldots,E_n)$ such that \begin{enumerate}
    \item Each line bundle $E_i$ is exceptional: $Ext^m(E_i,E_i)=k$ if $m=0$ and zero otherwise.
    \item $Ext^m(E_i,E_j)=0$ for all $i>j$.
    
    The collection is called strong if in addition:
    \item $Ext^m(E_i,E_j)=0$ for $m\neq 0$ and all $i,j$.
\end{enumerate}

An exceptional collection of line bundles $(E_1,\ldots,E_n)$ is called full if $D^b(X)$ is the smallest full strict triangulated subcategory of $D^b(X)$ which contains $E_1,\ldots,E_n$.

The paper \cite{HP} by Hille and Perling contains the first systematic study of full exceptional collection of line bundles on smooth rational surfaces. In particular, they introduced a technical notion called toric system as an equivalent notion of (numerically) exceptional collection of line bundles. It is defined as follows. Let $(O_X(D_1),\ldots,O_X(D_n))$ be a numerically exceptional collection of line bundles on a smooth projective surface with $\chi(O_X)=1$. Consider differences between the $D_i$ and put:

$$A_i=
\begin{cases}
D_{i+1}-D_i\quad\text{for}\quad 1\le i\le n-1,\\
(D_1-K_X)-D_n=-K_X-(A_1+\ldots+A_{n-1})\quad\text{for}\quad i=n.
\end{cases}
$$

It was shown in \cite{HP} that the system of divisors $(A_1,\ldots,A_n)$ satisfies the following properties:\begin{enumerate}
\item $A_i\cdot A_{i+1}=A_n\cdot A_1=1$ for $i=1,\ldots,n-1$;
\item $A_i\cdot A_j=0$ for $i+1<j$ unless $i=1,j=n$;
\item $A_1+\ldots+A_n=-K_X$.
\end{enumerate}

A system of divisors $(A_1,\ldots,A_n)$ satisfying the three properties above is called the toric system associated with the numerically exceptional collection of line bundles $(O(D_1),\ldots, O(D_n))$ on $X$. Among other theorems, L.Hille and M.Perling proved the following remarkable statement, which is starting point of our work:
\begin{theorem}\cite[Proposition 2.7]{HP} or \cite[Theorem 3.5]{V}
Let $X$ be a smooth projective surface with $\chi(O_X)=1$ with a toric system $(A_1,\ldots,A_n)$ of maximal length. Then there is a smooth projective toric surface $Y$ with torus-invariant irreducible divisors $D_1,\ldots,D_n$ (listed in the cyclic order), such that $D_i^2=A_i^2$ for all $i$. 
\end{theorem}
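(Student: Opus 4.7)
The plan is to reformulate the problem combinatorially, construct a candidate fan of $Y$ from the numerical data $a_i:=-A_i^2$, and then verify completeness by induction on $n$. Recall that specifying a smooth complete toric surface with $n$ torus-invariant prime divisors $D_1,\ldots,D_n$ in cyclic order is equivalent to specifying primitive lattice vectors $v_1,\ldots,v_n\in\Z^2$ (with indices read cyclically) such that every consecutive pair $(v_i,v_{i+1})$ is a $\Z$-basis, and under this correspondence the self-intersections are determined by the relation $v_{i-1}+v_{i+1}=-D_i^2\cdot v_i$. So the theorem reduces to constructing $v_i$ satisfying the recursion $v_{i+1}=a_i v_i-v_{i-1}$ cyclically.

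First I would pick any basis $v_1,v_2$ of $\Z^2$ and define $v_3,v_4,\ldots$ by the recursion. In matrix form $\binom{v_{i+1}}{v_i}=M_i\binom{v_i}{v_{i-1}}$ with $M_i=\bigl(\begin{smallmatrix}a_i&-1\\ 1&0\end{smallmatrix}\bigr)\in\mathrm{SL}_2(\Z)$, so $\det(v_i,v_{i+1})=\det(v_1,v_2)=1$ throughout; each $v_i$ is therefore primitive and the smoothness of the would-be fan is free. What remains is the closure condition $v_{n+1}=v_1$, $v_{n+2}=v_2$---equivalently the single identity $M_n M_{n-1}\cdots M_2=I$ in $\mathrm{SL}_2(\Z)$---together with a check that the rays are enumerated in cyclic order. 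Once closure and ordering are in hand, the $v_i$ span a complete smooth fan in $\R^2$ whose associated toric surface is the desired $Y$.

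To establish closure I would induct on $n$, mirroring toric blow-down of $(-1)$-curves. For $n=3$ the axioms together with $\chi(\O_X)=1$ pin down $(a_1,a_2,a_3)=(-1,-1,-1)$, recovering the fan of $\P^2$. For the inductive step one invokes the key lemma that when $n\geq 4$ some $A_i^2=-1$. Granting it, the length-$(n-1)$ sequence
\[
(A_1,\ldots,A_{i-2},\,A_{i-1}+A_i,\,A_i+A_{i+1},\,A_{i+2},\ldots,A_n)
\]
is checked to be a toric system on the blow-down $X'$ of $X$ along $A_i$: the cyclic relation $(A_{i-1}+A_i)(A_i+A_{i+1})=1+A_i^2+1=1$ exploits $A_i^2=-1$, and the anticanonical sum descends via $\pi_{*}$ because $\pi_{*}A_i=0$. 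The inductive hypothesis then produces a toric $Y'$, and the toric blow-up of $Y'$ at the fixed point shared by its $(i-1)$-st and $i$-th invariant curves produces $Y$.

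The hard part will be the existence lemma itself: for $n\geq 4$, every toric system of maximal length contains some $i$ with $A_i^2=-1$. Geometrically this is the statement that every smooth projective toric surface of Picard rank $\geq 2$ admits a torus-invariant $(-1)$-curve, and here we must extract it from the toric-system axioms alone, on an $X$ that is not assumed toric. The intuition is a convexity argument---if no $a_i$ equals $1$, then the iterates of the recursion fail to wind once around the origin---and making this rigorous will combine a continued-fraction-style analysis of products of the matrices $M_i$ with the numerical identity $\sum A_i^2=K_X^2-2n$ and Hodge-index signature constraints on the span of the $A_i$ in $N^1(X)$.
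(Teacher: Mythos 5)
The paper does not actually prove this statement --- it is imported verbatim from \cite[Proposition 2.7]{HP} and \cite[Theorem 3.5]{V} --- so your proposal can only be compared with those arguments; in outline (translate to fan combinatorics, then induct by contracting where $A_i^2=-1$) it follows the same strategy as Hille--Perling and as the ``admissible sequence'' formalism of Section 5. The concrete problem is that your key lemma is false as stated: for $n=4$ the toric system $(H_1,H_2,H_1,H_2)$ on $\P^1\times\P^1$ has squares $(0,0,0,0)$, and the Hirzebruch systems have squares $(k,0,-k,0)$, which contain no $-1$ unless $k=\pm1$. The geometric heuristic you invoke (``every smooth projective toric surface of Picard rank $\ge 2$ has a torus-invariant $(-1)$-curve'') likewise fails for $\P^1\times\P^1$ and $\mathbb{F}_2$; the correct threshold is $n\ge 5$. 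Your induction must therefore bottom out at $n=4$ with a separate classification of length-$4$ toric systems (showing the squares are $(k,0,-k,0)$ up to cyclic shift, realized by $\mathbb{F}_{|k|}$), which you have not supplied. Two further slips: the closure identity must involve all $n$ matrices, e.g.\ $M_1M_nM_{n-1}\cdots M_2=I$, not $M_n\cdots M_2=I$; and the inductive step speaks of ``the blow-down $X'$ of $X$ along $A_i$,'' but $A_i$ is only a divisor class with $A_i^2=A_i\cdot K_X=-1$ on a surface not yet known to be rational, so no such contraction morphism exists --- the induction has to be reformulated purely on the lattice data $(N^1(X),\{A_i\})$.

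The deeper gap is that the entire content of the theorem sits in the $n\ge 5$ existence lemma, which you defer, and the plan you sketch for it lacks a necessary input. The toric-system axioms give only $\sum A_i^2=K_X^2-2n$; for any winding-number or matrix-product argument to close up one needs the sharper identity $\sum A_i^2=12-3n$, equivalently $K_X^2=12-n$, equivalently (by Noether's formula, since $\chi(\O_X)=1$) the topological equality $e(X)=n=\operatorname{rank}N^1(X)+2$. For a surface only assumed to satisfy $\chi(\O_X)=1$ this is not free: it amounts to controlling $p_g$ and $q$ (in effect showing $p_g=q=0$, so that $b_2=\operatorname{rank}N^1$), and establishing this from the existence of a maximal-length toric system is precisely the substantive preliminary step in Vial's proof for non-rational $X$. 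Without it, the Hodge-index signature $(1,n-3)$ and the Gram matrix of the $A_i$ do not by themselves yield an index with $A_i^2=-1$, nor do they pin the winding number to $1$. So the proposal is a sensible strategy outline, but its two load-bearing steps --- the $n=4$ base case and the $n\ge 5$ existence lemma with the required Noether-formula input --- are respectively wrong and absent.
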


Having a (full, maximal length) exceptional collection of line bundles in derived category of coherent sheaves on surfaces is a nice but rare property. It is interesting to understand the obstruction of the existence of such collections on a smooth projective surface. It is known that every smooth rational surface has a full exceptional collection of line bundles by \cite{HP} or Orlov's blow-up formula for derived categories. But it is still an open question whether rational surface is the only one having such collections. The Orlov's folklore conjecture states that a smooth projective surface admitting a full exceptional collection of line bundles must be a rational surface. In \cite{V}, C.Vial classified all complex surfaces with $p_g=q=0$ with numerically exceptional collection of line bundles of maximal length. M.Brown and I.Shipman proved Orlov's folklore conjecture in the case of full strong exceptional collection of line bundles, see~\cite[Theorem 4.4]{BS}. In \cite{EXZ}, we showed that every smooth projective surface admitting a cyclic strong exceptional collection of line bundles (See Definition $1.5$) of maximal length must be a rational surface, moreover, a weak del Pezzo surface. The main results of this article are to prove more statements in this direction:
\begin{theorem}
Let $X$ be a smooth projective surface admitting a strong exceptional collection of line bundles of maximal length. Then $X$ must be a rational surface.
\end{theorem}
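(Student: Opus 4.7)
The approach is to combine the cohomological vanishings imposed by strongness with Vial's classification of surfaces admitting a numerically exceptional collection of line bundles of maximal length, and then to eliminate each non-rational candidate on the resulting list.

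First I would observe that exceptionality of the individual $E_i$ already forces $p_g=q=0$: each $E_i$ satisfies $0=\Ext^m(E_i,E_i)=H^m(X,\O_X)$ for $m\ne 0$, so $h^{0,1}=h^{0,2}=0$ and $\chi(\O_X)=1$. This is exactly the hypothesis of the Hille-Perling toric system construction and of Vial's classification theorem \cite{V}. Vial's theorem then reduces the claim to showing that $X$ cannot be one of a short list of non-rational surfaces with $p_g=q=0$: Enriques surfaces, certain properly elliptic (Dolgachev-type) surfaces, and certain surfaces of general type (Godeaux, Burniat, fake projective plane, and their variants).

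Next I would extract from strongness the full family of cohomological vanishings it provides. Writing $E_i=\O_X(D_i)$, strongness means $H^k(X,\O_X(D_j-D_i))=0$ for all $k>0$ and $i\ne j$, while exceptionality for $i>j$ forces the analogous vanishing in every degree. Expressing each $D_j-D_i$ as a consecutive sum of the $A_\ell$'s of the associated toric system turns this into a rigid family of vanishings for every proper consecutive partial sum $S=A_i+A_{i+1}+\cdots+A_{j-1}$, together with the dual vanishings for $-S$. The companion toric surface $Y$ produced by Theorem 1.1 is a reference model on which these vanishings manifestly hold for the torus-invariant subdivisors of the anticanonical cycle. Transporting the corresponding Riemann-Roch computations from $Y$ to $X$ via the matching self-intersections $A_i^2=(D_i^Y)^2$ yields the relations
\[
S^2+K_X\cdot S=-2,\qquad \chi(S)=1+\tfrac{1}{2}(S^2-K_X\cdot S)\ge 0
\]
for every such partial sum. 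Combined with the identity $A_1+\cdots+A_n=-K_X$ and the pseudo-effectivity or torsion behavior of $K_X$ on each non-rational candidate, these constraints should be incompatible with the positivity properties of $K_X$ on any surface of non-negative Kodaira dimension, ruling out that surface.

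The main obstacle lies in the general type stratum, where the Picard lattice and effective cone vary subtly from family to family (fake projective planes, Burniat, Godeaux, etc.) and a single clean contradiction is not obvious. A preferable uniform argument would be to upgrade the hypotheses of maximal length together with strongness into finite global dimension of the endomorphism algebra $\End(\bigoplus E_i)$ even without full generation, after which the method of Brown-Shipman \cite{BS} could be invoked directly. For Enriques surfaces, the $2$-torsion of $K_X$ together with Serre duality should produce a cleaner obstruction through some consecutive partial sum that becomes effective after twisting by the torsion canonical class, violating the required vanishing. The elliptic and general type cases should follow from the numerical argument above once one controls the intersection numbers $A_i\cdot K_X$ using the matching toric model and the pseudo-effectivity of $K_X$.
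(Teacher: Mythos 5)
There is a genuine gap: your plan never actually eliminates the non-rational candidates, and the numerical relations you propose to use cannot do the job in principle. The constraints you extract ($S^2+K_X\cdot S=-2$ and $\chi(S)\ge 0$ for consecutive partial sums $S$ of the toric system, plus $\sum A_i=-K_X$) are exactly the conditions defining a \emph{numerically} exceptional collection of maximal length, and Vial's classification shows that these are satisfied on genuinely non-rational surfaces (Enriques, Dolgachev, and various general-type surfaces with $p_g=q=0$). So no purely numerical argument of the kind you sketch can produce a contradiction; you must use the actual cohomological content of strongness, and your proposal defers precisely this step (``should be incompatible,'' ``a single clean contradiction is not obvious,'' ``should follow''). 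Your fallback of upgrading to finite global dimension and citing Brown--Shipman also does not close the gap, since their theorem uses fullness, which is exactly the hypothesis this theorem removes.

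The missing idea, which is how the paper proceeds, is combinatorial-plus-geometric rather than case-by-case. Since each $A_i$ with $1\le i\le n-1$ is strong left-orthogonal, one has $A_i^2=\chi(A_i)-2\ge -2$ there, so the sequence $(A_1^2,\dots,A_n^2)$ is a \emph{strong admissible} sequence; these are completely classified (Propositions 5.1 and 5.2, via the Hille--Perling/Vial companion toric surface). Inspection of that classification shows there is always a consecutive partial sum $S=A_i+\cdots+A_j$ with $1\le i\le j\le n-1$ and $S^2\ge 0$. Because the collection is strong, $S$ is strong left-orthogonal, so $h^0(S)=\chi(S)=S^2+2\ge 2$: the linear system $|S|$ moves. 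An effective left-orthogonal divisor is a non-negative combination of smooth rational curves, so the moving pencil covers $X$ by rational curves, forcing $\mathrm{kod}(X)=-\infty$; then $h^1(\mathcal O_X)=0$ (which you correctly derived) forces the base curve of the ruling to be rational, hence $X$ is rational. Without the existence of this moving, genuinely effective divisor---which is where strongness is used beyond numerics---your outline does not constitute a proof.
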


An immediate corollary of this theorem is that a surface admitting a cyclic strong exceptional collection of line bundles of maximal length must be a rational surface. An alternating proof of this fact is given in \cite[Lemma 15.1]{EXZ}

Motivated by the desire to describe geometric phantom categories: subcategories of $D^b(X)$ with trivial Grothendieck group $K_0$ and Hochschild homology $HH_0$, a large amount of work was carried out to establish the exceptional collection of line bundles of maximal length on a surface of general type with $p_g=q=0$, see~\cite{AO},\cite{BGS},\cite{GS},\cite{GO}. It immediately follows from Theorem $1.2$ that:

\begin{corollary}
Let $X$ be a surface of general type with $p_g=q=0$. Then a strong exceptional collection of line bundles of maximal length does not exist.
\end{corollary}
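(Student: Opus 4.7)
The plan is to reduce the corollary to Theorem 1.2 by a direct contradiction argument, so there is essentially nothing to prove beyond observing that ``rational'' and ``general type'' are mutually exclusive classes of smooth projective surfaces. First I would assume, for the sake of contradiction, that $X$ is a smooth projective surface of general type with $p_g = q = 0$ and that $X$ admits a strong exceptional collection of line bundles of maximal length $n = \operatorname{rank}(K_0^{\mathrm{num}}(X)) = \operatorname{rank}(N^1(X)) + 2$.

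Next I would invoke Theorem 1.2 verbatim: the hypothesis on $X$ places us squarely in its statement, so $X$ must be a rational surface. This is the only nontrivial ingredient of the argument, and it does the entire job; the role of the hypothesis $p_g = q = 0$ in the corollary is merely to frame the natural class of surfaces where the question is interesting (since for surfaces of general type with $p_g > 0$ or $q > 0$ one has classical obstructions to the existence of exceptional line bundles already at the level of cohomology).

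The final step is to note that a surface of general type has Kodaira dimension $\kappa(X) = 2$, while a rational surface has $\kappa(X) = -\infty$; in particular $p_g(X) = q(X) = 0$ does not force general type to coincide with rational. Thus the two conclusions ``$X$ is of general type'' and ``$X$ is rational'' are incompatible, giving the desired contradiction.

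There is no real obstacle here: the entire content of the corollary is transported through Theorem 1.2. The only small point worth flagging is that one should verify the numerical conventions in the hypothesis of Theorem 1.2 are indeed satisfied for any surface with $p_g = q = 0$, namely $\chi(\mathcal{O}_X) = 1 - q + p_g = 1$, so that the Hille--Perling machinery producing the toric system (and, through it, Theorem 1.2) applies without change.
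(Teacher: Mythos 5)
Your proposal is correct and matches the paper's (implicit) argument exactly: the corollary is stated as an immediate consequence of Theorem 1.2, obtained by noting that a rational surface has Kodaira dimension $-\infty$ while a surface of general type has Kodaira dimension $2$, so the two classes are disjoint. Your additional check that $\chi(\mathcal{O}_X)=1-q+p_g=1$ (so the toric-system machinery underlying Theorem 1.2 applies) is a sensible verification and consistent with the paper's standing hypotheses.
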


Although it seems that Orlov's folklore conjecture is out of reach at present, we are able to prove the conjecture for small Picard rank surfaces:

\begin{theorem}
Let $X$ be a smooth projective surface with $rk N^1(X)\leq 3$ admitting a full exceptional collection of line bundles. Then $X$ must be a rational surface.
\end{theorem}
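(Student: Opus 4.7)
The plan is to combine the Hochschild-homology constraints coming from the full exceptional collection with the Hille--Perling numerical reconstruction, and then rule out the finite list of non-rational candidates in each small Picard-rank case. Throughout, $n := \mathrm{rk}\,N^1(X)+2 \in \{3,4,5\}$.

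First I would extract two consequences of the fullness hypothesis. The collection gives $HH_i(X)=0$ for $i \ne 0$ and $\dim HH_0(X)=n$; by Hochschild--Kostant--Rosenberg, $HH_i(X) \cong \bigoplus_{q-p=i} H^{p,q}(X)$, so the vanishings in degrees $-1$ and $-2$ force $q(X)=p_g(X)=0$ and hence $\chi(\O_X)=1$, placing us in the Hille--Perling setting. Moreover $K_0(X) \cong \Z^n$ is torsion-free; combined with the rational isomorphism $\mathrm{ch}: K_0(X)_{\Q} \xra{\sim} \mathrm{CH}^*(X)_{\Q}$ and the determinant homomorphism $\det: K_0(X) \to \Pic(X)$ sending $[L]$ to $L$, any torsion $L \ne \O_X$ in $\Pic(X)$ would give a nonzero class $[L]-[\O_X] \in K_0(X)$ with rationally vanishing Chern character, producing torsion in $K_0(X)$, a contradiction. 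Hence $\Pic(X)$ is torsion-free.

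Next, I would translate the collection into a toric system $(A_1,\ldots,A_n)$ and apply Hille--Perling to obtain a smooth projective toric surface $Y$ whose boundary divisors have the same intersection matrix as the $A_i$ (the toric axioms lift $D_i^2=A_i^2$ to the full intersection matrix). Since $\mathrm{rk}\,N^1(Y)=n-2$, $Y$ is $\P^2$, a Hirzebruch surface $\mathbb F_a$, or a toric one-point blow-up of $\mathbb F_a$ for $n=3,4,5$ respectively. Expanding $K_X^2 = (A_1+\cdots+A_n)^2$ as a polynomial in the $A_i\cdot A_j$ gives $K_X^2 = K_Y^2 = 12-n \in \{7,8,9\}$, and Noether yields $c_2(X)=n$. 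With $p_g=q=0$ and $K_X^2>0$, the Enriques--Kodaira classification forces $X$ to be rational or of general type (Enriques and properly elliptic surfaces have $K^2=0$). Suppose for contradiction $X$ is of general type. For $n=3$, Picard rank $1$ makes $X$ minimal, and $K_X^2=9$ with $p_g=q=0$ identifies $X$ as a fake projective plane. Every fake projective plane has nontrivial finite fundamental group, hence nontrivial torsion in $H_1(X,\Z)$, and via the exponential sequence (using $p_g=q=0$) nontrivial torsion in $\Pic(X)$, contradicting the first step. For $n=4,5$ the candidates are minimal or near-minimal surfaces of general type with $p_g=q=0$ and $K^2\in\{7,8\}$; simply-connected examples occur, so the $\Pic$-torsion obstruction alone does not suffice. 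The linear dependencies among $A_1,\ldots,A_n$ forced by $\mathrm{rk}\,N^1(X)=n-2$, together with the intersection matrix from Hille--Perling, pin down each $A_i$ as an explicit integer combination of a fixed basis of $N^1(X)$. The cohomological vanishings $H^\bullet(X,-(A_i+\cdots+A_{j-1}))=0$ required by exceptionality would then, via Serre duality $H^2(X,-D)\cong H^0(X,K_X+D)^\vee$ and the bigness and nefness of $K_X$ on the minimal model, force at least one $H^0(X,K_X+D)\ne 0$, contradicting the vanishing of the corresponding $\Ext^2$.

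The principal obstacle is the case analysis for $n=4,5$: systematically enumerating the finite list of toric systems on general-type surfaces with the prescribed numerical invariants, and checking in each that Kodaira--Serre obstructs the required cohomological vanishings. For $n=3$ the obstruction is clean and purely arithmetic (torsion in $\Pic$ coming from $\pi_1$), but for higher Picard rank it becomes cohomological and requires more delicate positivity arguments on the minimal model.
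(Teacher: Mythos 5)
Your proposal has a genuine gap: the cases $n=4$ and $n=5$ (Picard rank $2$ and $3$) are not actually proved, and these are precisely the cases that carry the content of the theorem. For $n=3$ you give a complete (if heavy) argument, reducing to fake projective planes and invoking the nontriviality of their $H_1$ to produce torsion in $\Pic(X)$. But for $n=4,5$ you only assert that the exceptionality vanishings would, ``via Serre duality and the bigness and nefness of $K_X$ on the minimal model,'' force some $H^0(X,K_X+D)\ne 0$. You never exhibit which segment sum $D=A_k+\cdots+A_l$ does the job, and no standard nonvanishing theorem applies: $D$ is merely a class with prescribed intersection numbers, not known to be effective or nef, so bigness of $K_X$ gives no section of $K_X+D$ for free. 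Your own closing paragraph concedes that the enumeration and the cohomological exclusion remain to be done; as written this is a plan for a proof, not a proof. (The preliminary steps --- HKR giving $p_g=q=0$, torsion-freeness of $\Pic$, and $K_X^2=12-n$ from the toric system --- are correct but only set up the case analysis.)

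The paper closes this gap by a different and uniform mechanism, which is the ingredient your sketch is missing. It first classifies admissible sequences of length $\le 5$, namely $(1,1,1)$, $(0,m,0,-m)$ and $(-1,s,0,-s-1,-1)$, and then uses Kuznetsov's anticanonical pseudoheight: fullness forces $ph_{ac}(\mathbb{E})\le -2$, and a combinatorial count over partitions $1\le a_0<\cdots<a_p\le n$ shows this bound is violated unless at least three of the divisors $A_{a_0}+\cdots+A_{a_1-1},\ \ldots,\ -K_X-A_{a_0}-\cdots-A_{a_p-1}$ are effective, which in turn produces an effective left-orthogonal $D$ with $D^2\ge 0$. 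For such $D$ one has $h^2(D)=0$ and $h^0(D)\ge D^2+2\ge 2$, so $|D|$ moves, and by Elagin's criterion $D$ is a nonnegative combination of smooth rational curves; hence $X$ is covered by rational curves, $\mathrm{kod}(X)=-\infty$, and $h^1(\O_X)=0$ forces the base of the ruling to be rational. To repair your argument you would need either this effectivity statement or a genuine surface-by-surface exclusion of minimal general type surfaces with $p_g=q=0$ and $K^2\in\{7,8\}$, which is substantially harder than the $K^2=9$ case.
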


This means that if the length of a maximal length exceptional collection of line bundles on surface of general type with $p_g=q=0$ is $3, 4$ or $5$, it cannot be full. 

Another important aspect of exceptional collections of line bundles are cyclic strong exceptional collections of line bundles. We recall its definition as follows:
\begin{definition}
Let $(E_1,\ldots,E_n)$ be a strong exceptional collection of line bundles on a smooth projective surface $X$. It is called cyclic strong if $$Ext^k(E_j,E_i\otimes\omega_X^{-1})=0$$ for $k\geq 1$ and $j>i$. In this case, a tilting bundle(See Definition $8.1$) formed by such collection of line bundles is called cyclic strong.
\end{definition}

Cylic strong exceptional collections of line bundles are investigated in many areas of mathematics. There are mainly three other versions of such collection under different names. We recall them as follows:

\begin{definition}[\cite{B}, \cite{BF}]
Let $X$ be a smooth projective surface and $\pi: Tot(\omega_X)\rightarrow X$ be the natural projection from the total space of the canonical bundle. We say that a tilting bundle, $T$, is pull back if $\pi^*(T)$ is tilting. If $T$ is the sum of an exceptional collection of line bundles, we say the corresponding collection of line bundles is a pull back exceptional collection of line bundles.
\end{definition}

\begin{definition}[\cite{CHAN}]
Let $X$ be a smooth projective surface. A tilting bundle $T$ on $X$ is $2$-hereditary if $Ext^k(T,T\otimes\omega_X^{-i})=0$ for all $i\geq 0$ and $k\geq 1$.
If in addition, $T$ is a direct sum of exceptional collection of line bundles, then such collection is called a $2$-hereditary exceptional collection of line bundles
\end{definition}

\begin{definition}[\cite{BF}]
Let $X$ be a smooth projective surface and $T$ is a tilting bundle which is a direct sum of a strong exceptional collection of line bundles:$(E_1,\ldots,E_n)$. Such collection is called almost pull back if the following condition holds:
$$Ext^k(E_i,E_j\otimes\omega_X^{-1})=0$$ for all $i,j$ and $k\geq 1$. In this case, $T$ is called an almost pull back tilting bundle.
\end{definition}

In \cite[Remark 3.25]{BF}, Ballard and Favero asked the question on the precise relations of those four notions. We are able to answer this question by showing the following statement.

\begin{theorem}
Let $\mathbb{E}:=(E_0,\ldots,E_n)$ be an exceptional collection of line bundles on a smooth projective surface $X$. Then $\mathbb{E}$ is cyclic strong iff it is pull back iff it is $2$-hereditary iff it is almost pull back.
\end{theorem}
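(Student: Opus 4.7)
The trivial direction uses the projection formula. Since $\pi : \mathrm{Tot}(\omega_X) \to X$ is affine with $\pi_*\O_{\mathrm{Tot}(\omega_X)} = \bigoplus_{m \ge 0} \omega_X^{-m}$, one has
\[
\Ext^k_{\mathrm{Tot}(\omega_X)}(\pi^* T, \pi^* T) \;=\; \bigoplus_{m \ge 0} \Ext^k_X(T, T \otimes \omega_X^{-m}),
\]
so $\pi^* T$ is tilting (in the appropriate non-proper sense) if and only if the 2-hereditary condition holds; hence pull back $\Leftrightarrow$ 2-hereditary. The implications 2-hereditary $\Rightarrow$ almost pull back $\Rightarrow$ cyclic strong are immediate: the former is the special case $m = 1$, and the latter is the restriction of the almost pull back condition to pairs in which the first index exceeds the second.

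The substance of the theorem therefore lies in the implication cyclic strong $\Rightarrow$ 2-hereditary, for which I would combine two ingredients. First, the author's earlier paper \cite{EXZ} ensures that a surface carrying a cyclic strong exceptional collection of line bundles of maximal length is weak del Pezzo; hence $-K_X$ is nef and big, and Kawamata--Viehweg vanishing gives $H^k(X, \omega_X^{-m}) = 0$ for all $k \ge 1$ and $m \ge 1$. This handles the diagonal vanishings $\Ext^k(E_i, E_i \otimes \omega_X^{-m}) = 0$ uniformly. Second, a direct check using Serre duality shows that cyclic strongness is preserved under the Serre twist $(E_1, \ldots, E_n) \mapsto (E_2, \ldots, E_n, E_1 \otimes \omega_X^{-1})$. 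Iterating in both directions produces the helix $\{E_i \otimes \omega_X^{-p}\}_{1 \le i \le n,\, p \in \Z}$ every consecutive length-$n$ window of which is a strong exceptional cyclic strong collection, and each such window yields extra $\Ext$-vanishings between its members.

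To complete 2-hereditary, I would dispatch the pairs $(i,j,m)$ that lie outside a common helix window---in particular most configurations with $m \ge 2$---by rewriting via Serre duality
\[
\Ext^k(E_i, E_j \otimes \omega_X^{-m}) \;\cong\; \Ext^{2-k}(E_j, E_i \otimes \omega_X^{m+1})^*,
\]
and reducing to the vanishing of $H^0$ and $H^1$ of the line bundle $E_i \otimes E_j^{-1} \otimes \omega_X^{m+1}$. The Hille--Perling toric system $(A_1, \ldots, A_n)$ supplied by Theorem 1.1 expresses the difference $E_i - E_j$ as an explicit combination of the $A_k$ together with the identity $-K_X = A_1 + \cdots + A_n$, so the numerical class of the relevant line bundle is controlled combinatorially, and Kawamata--Viehweg on the weak del Pezzo $X$ then forces the required cohomological vanishing.

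The hardest step is precisely this last one. Rotation alone only produces vanishings inside length-$n$ windows, so closing the gap for $m \ge 2$ and for the off-diagonal $m = 1$ cases with $i < j$ forces a coordinated use of the weak del Pezzo geometry (via Kawamata--Viehweg) and the toric-system combinatorics (via Theorem 1.1), uniformly in $m$, $i$, and $j$. This is exactly the point at which the paper's toric-systems theme becomes essential, and it is where most of the technical work of the proof should be expected to concentrate.
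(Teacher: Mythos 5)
Your overall architecture matches the paper's: the projection formula $\pi_*\O_{Tot(\omega_X)}=\bigoplus_{m\ge 0}\omega_X^{-m}$ gives pull back $\Leftrightarrow$ $2$-hereditary, the implication chain $2$-hereditary $\Rightarrow$ almost pull back $\Rightarrow$ cyclic strong is formal, and the remaining direction reduces to the vanishing $H^k(X,\O_X(D-rK_X))=0$ for $k\ge 1$, $r\ge 0$, where $D$ runs over differences $E_j-E_i$ (so $D$, $-D$ or $0$ is a strong left-orthogonal sum of consecutive $A_i$'s). The gap is in your final step. To get $H^k(D-rK_X)=0$ from Kawamata--Viehweg you would need $D-(r+1)K_X$ to be nef and big, and on a weak del Pezzo surface carrying $(-2)$-curves it generally is not: $-K_X$ is orthogonal to every $(-2)$-curve $C$, so $(D-(r+1)K_X)\cdot C=D\cdot C$ for all $r$, and $D\cdot C$ is negative for instance when $D=E+C$ with $E$ a $(-1)$-curve meeting $C$ once (an effective slo $(-1)$-class, so such $D$ do arise as differences in these collections; the surfaces in the paper's Table 6 of types $A_1$, $A_2$, etc.\ all have $(-2)$-curves). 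Since the toric-system combinatorics only controls $D^2$ and $D\cdot K_X$, not the intersection of $D$ with individual negative curves, "numerical control plus Kawamata--Viehweg" cannot close the argument as stated.

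The paper circumvents this by restricting to a smooth elliptic curve $C\in|-K_X|$, available because \cite{EXZ} forces $X$ to be weak del Pezzo of degree $K_X^2\ge 3$: from $0\to\O(D-(r-1)K_X)\to\O(D-rK_X)\to\O_C(D-rK_X)\to 0$ and the degree computation $\deg_C\O_C(D-rK_X)=-K_X\cdot D+rK_X^2\ge (r-1)K_X^2>0$ (using $-2\le D^2\le K_X^2-2$ from Theorem 4.7(5)), Serre duality on the elliptic curve kills $H^1$ of the restriction, and induction on $r$ starting from the slo hypothesis at $r=0$ (or, when $-D$ is slo, from the observation that $D-K_X$ is again a consecutive sum of the $A_i$'s and hence slo) completes the proof. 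Your helix/Serre-twist remark is correct but, as you yourself note, only reproduces the cyclic strong vanishings; the $i\le j$, $m=1$ cases and all $m\ge 2$ cases require the elliptic-curve induction or some genuine replacement for the Kawamata--Viehweg step.
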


The counterpart of exceptional vector bundles in derived category of coherent sheaves $D^b(X)$ on a surface $X$ is an exceptional sheaf with torsion subsheaf. Sometimes, it is more convienient to study exceptional sheaf with torsion via corresponding exceptional vector bundles. The exceptional objects on a del Pezzo surface are well understood thanks to the work of Kuleshov and Orlov. In \cite{KO}, they showed that any exceptional object on a del Pezzo surface is isomorphic to a shift of an exceptional vector bundle or a line bundle on a $(-1)$-curve. Thus, it is natural to consider the classification of exceptional objects on a weak del Pezzo surface  (The anti-canonical divisor $-K_X$ is big and nef). In this case, the exceptional objects are not as simple as those on del Pezzo surfaces due to the existence of $(-2)$-curves on weak del Pezzo surfaces. Instead, one can expect to classify the exceptional sheaves on $X$. In \cite{KU}, S.Kuleshov proved the following theorem:
\begin{theorem}
Suppose $\mathcal{E}$ is an exceptional sheaf on a weak del Pezzo surface $X$. We have the following cases:\begin{enumerate}
    \item $\mathcal{E}$ is locally free
    \item $\mathcal{E}$ has torsion subsheaf supported on $(-2)$-curves.
    \item $\mathcal{E}\cong O_C(d)$ for some rational curve $C$ and $C^2=-1$.
    \item $\mathcal{E}$ is a torsion exceptional sheaf whose support contains exactly one $(-1)$-curve and some $(-2)$-curves.
\end{enumerate}
\end{theorem}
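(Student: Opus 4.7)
The plan is to filter $\mathcal{E}$ by its torsion subsheaf $T = T(\mathcal{E})$, giving the short exact sequence
$$0 \to T \to \mathcal{E} \to \mathcal{F} \to 0$$
with $\mathcal{F} = \mathcal{E}/T$ torsion-free, and to analyze the regimes $T=0$, $\mathcal{F}=0$, and the mixed case in turn.

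First I would show that whenever $\mathcal{F}$ is nonzero it is locally free. On the smooth surface $X$ the reflexive hull $\mathcal{F}^{**}$ is automatically locally free, and the cokernel $Q$ of $\mathcal{F} \hookrightarrow \mathcal{F}^{**}$ has $0$-dimensional support; a nonzero $Q$ would produce nontrivial first-order deformations of $\mathcal{F}$ by sliding the points of $\Supp(Q)$, contributing a nonzero class to $\Ext^1(\mathcal{E},\mathcal{E})$, so exceptionality forces $Q=0$. When $T=0$ this already yields case (1). In the mixed case $T \ne 0 \ne \mathcal{F}$, I would apply $\Hom(-,\mathcal{E})$ and $\Hom(\mathcal{E},-)$ to the torsion filtration and use the vanishing of higher self-$\Ext$'s of $\mathcal{E}$ to constrain the support of $T$: a component $C$ of $\Supp(T)$ with $C^2 \ge -1$ carries nontrivial moduli of torsion sheaves on $C$, producing a nonzero $\Ext^1$ contribution that cannot be killed by the extension; hence every component of $\Supp(T)$ must be a $(-2)$-curve, yielding case (2).

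For the pure torsion case $\mathcal{F}=0$, the support $Z$ of $\mathcal{E}$ is $1$-dimensional (the $0$-dimensional case is excluded by $\chi(\mathcal{O}_p,\mathcal{O}_p) = 0 \ne 1$), and indecomposability of an exceptional sheaf forces $Z$ to be connected. On a weak del Pezzo surface, adjunction restricts negative self-intersection smooth irreducible curves to rational $(-1)$- and $(-2)$-curves. Using the identities $\chi(\mathcal{O}_{C_i}(d_i), \mathcal{O}_{C_i}(d_i)) = -C_i^2$ on the diagonal and $\chi(\mathcal{O}_{C_i}, \mathcal{O}_{C_j}) = -C_i \cdot C_j$ for distinct transversely meeting components, one sees that support entirely on $(-2)$-curves yields $\chi(\mathcal{E},\mathcal{E}) \ge 2$ (the negative-definite $(-2)$-lattice is even with minimum $2$), and that support containing two or more $(-1)$-curves is incompatible with $\chi(\mathcal{E},\mathcal{E}) = 1$. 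Hence $Z$ contains exactly one $(-1)$-curve, possibly attached to $(-2)$-curves; this gives case (3) when $Z$ is a single $(-1)$-curve ($\mathcal{E} \cong \mathcal{O}_C(d)$) and case (4) otherwise.

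The main obstacle is the detailed analysis within cases (2) and (4): one must construct $\mathcal{E}$ as an iterated extension of the building blocks $\mathcal{O}_{C_i}(d_i)$ on the curves of the configuration and verify by explicit computation that $\End(\mathcal{E}) = k$ and $\Ext^i(\mathcal{E},\mathcal{E}) = 0$ for $i \ge 1$. This amounts to a case-by-case study of the ADE intersection graph of the $(-2)$-configurations, together with precise choices of the twists $d_i$ compatible with exceptionality, which is the bulk of Kuleshov's argument in \cite{KU}.
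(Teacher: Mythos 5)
Note first that the paper contains no proof of this statement to compare against: Theorem~1.10 is quoted from Kuleshov \cite{KU} purely as background for Conjecture~1.11 and Theorem~1.12. Judged on its own terms, your sketch has the right architecture --- the torsion filtration $0\to T\to\mathcal{E}\to\mathcal{F}\to 0$ and the three regimes $T=0$, $\mathcal{F}=0$, and mixed are indeed how such classifications are organized --- but two of your key steps fail as stated.

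The mechanism you propose for case (2) is wrong. You argue that a component $C$ of $\Supp(T)$ with $C^2\ge -1$ ``carries nontrivial moduli of torsion sheaves on $C$,'' hence contributes to $\Ext^1(\mathcal{E},\mathcal{E})$. But for a $(-1)$-curve $C$ the sheaf $\O_C(d)$ is itself exceptional: $\EExt^1(\O_C,\O_C)\cong N_{C/X}\cong\O_C(-1)$ has neither sections nor $H^1$, so $\Ext^1_X(\O_C(d),\O_C(d))=0$ and there are no moduli at all. Your argument therefore cannot distinguish $(-1)$-curves from $(-2)$-curves, which is exactly the distinction the theorem turns on. The correct obstruction comes from the interaction of $T$ with the positive-rank quotient $\mathcal{F}$: Riemann--Roch gives $\chi(T,\mathcal{F})-\chi(\mathcal{F},T)=r\,c_1(T)\cdot K_X$ with $r=\rank\mathcal{F}>0$, and combining this asymmetry with $\Hom(T,\mathcal{F})=0$, with $\Ext^2(\mathcal{F},T)\cong\Hom(T,\mathcal{F}\otimes\omega_X)^{*}=0$, and with the rigidity of $\mathcal{E}$ (via a Mukai-type lemma for the torsion filtration) forces $c_1(T)\cdot K_X=0$; since $-K_X$ is nef and $c_1(T)$ is effective, every component of $\Supp(T)$ is then a $(-2)$-curve. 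A second gap occurs in case (4): exceptionality gives only $c_1(\mathcal{E})^2=-1$, and this does \emph{not} exclude two $(-1)$-curves in the support --- for example $c_1=2E_1+E_2$ with $E_1\cdot E_2=1$ (realized on any weak del Pezzo of degree $\le 7$ by $E_1$ and $L-E_1-E_2$) has square $-1$. So the uniqueness of the $(-1)$-curve cannot be read off from the Euler form; it requires the Hom-level analysis along the dual graph of the support that you defer to ``Kuleshov's case-by-case study.'' By contrast, your even-lattice argument excluding pure $(-2)$-support is correct, and the reduction of the torsion-free case to local freeness is sound in outline, though the ``sliding points'' heuristic should be replaced by the standard count $\chi(\mathcal{F},\mathcal{F})=\chi(\mathcal{F}^{**},\mathcal{F}^{**})-2r\ell$ (where $\ell$ is the length of $\mathcal{F}^{**}/\mathcal{F}$), again together with the Mukai-type lemma needed to transport $\Ext^1(\mathcal{F},\mathcal{F})\ne 0$ into $\Ext^1(\mathcal{E},\mathcal{E})\ne 0$ in the mixed case.
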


In the \cite{OU}, Okawa and Uehara proposed the following conjecture:
\begin{conjecture}
Let $X$ be a weak del Pezzo surface. For any exceptional sheaf $\mathcal{E}\in D(X)$, there exists an auto-equivalence $\Phi\in Auteq(D(X))$ such that $\Phi(\mathcal{E})$ is an exceptional vector bundle, or a line bundle on a $(-1)$-curve on $X$.
\end{conjecture}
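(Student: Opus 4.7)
The plan is to start from Kuleshov's classification (Theorem 1.10). Cases (1) and (3) already satisfy the conclusion with $\Phi=\Id$, so the problem reduces to producing, in cases (2) and (4), an auto-equivalence that transforms $\mathcal{E}$ into either a locally free exceptional sheaf (case (1)) or a line bundle on a $(-1)$-curve (case (3)). The natural candidates come from the $(-2)$-curves themselves: if $C\subset X$ is a $(-2)$-curve, then $C\cong\P^1$ and adjunction gives $K_X\cdot C=0$, so $\mathcal{O}_C(-1)$ is a spherical object of $D^b(X)$ in the sense of Seidel--Thomas. The associated spherical twist $T_{\mathcal{O}_C(-1)}$, together with tensor products by line bundles and shifts, supplies the group of auto-equivalences used below.

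The main step is an induction on the ``complexity'' of the torsion part of $\mathcal{E}$. In case (2), consider the torsion subsheaf $\mathcal{T}\subset\mathcal{E}$, supported on a union of $(-2)$-curves. Choose an irreducible component $C$ of $\Supp(\mathcal{T})$, compute $\Hom^\bullet(\mathcal{O}_C(-1),\mathcal{E})$ from the short exact sequence $0\to\mathcal{T}\to\mathcal{E}\to\mathcal{E}/\mathcal{T}\to 0$, and apply $T_{\mathcal{O}_C(-1)}$ or its inverse; the resulting object is again exceptional, and one expects its torsion length to strictly decrease. Iterating terminates at a locally free exceptional sheaf. In case (4) the same procedure is applied to peel off the $(-2)$-curve components of the support until only the unique $(-1)$-curve $E$ remains, at which point case (3) identifies the result, after a shift, with $\mathcal{O}_E(d)$.

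The principal obstacle is controlling these spherical twists when the $(-2)$-curves meet in a non-trivial ADE configuration. A single twist $T_{\mathcal{O}_C(-1)}$ acts on $N^1(X)$ by the reflection in the class of $C$, so iterated twists realise a braid-group action on the associated root lattice and may enlarge or reshuffle the support in a way that does not obviously decrease any naive counting function. Guaranteeing termination therefore requires a monovariant combining torsion length with a weight on the root lattice of the $(-2)$-configuration, whose existence is not automatic beyond low-rank or type $A$. I would first settle the type $A_n$ case and the case of small Picard rank (equivalently, weak del Pezzo surfaces of large degree) by explicit braid computations, and expect the general case to demand a more global mutation-theoretic argument; this is presumably why the present note can only offer a partial rather than a complete solution to the conjecture.
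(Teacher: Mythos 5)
You are attempting to prove the full Okawa--Uehara conjecture, but the paper itself does not prove this statement: it is stated as a conjecture, and the note only establishes the special case recorded as Theorem 1.12/9.1, namely torsion exceptional sheaves of the form $\mathcal{O}_D$ for a codimension-one subscheme $D$ on a weak del Pezzo surface with $K_X^2\geq 2$. Your reduction via Kuleshov's classification and your choice of spherical twists $T_{\mathcal{O}_C(-1)}$ at $(-2)$-curves are the right starting points, and they match the paper's toolkit. But your sketch has two concrete gaps beyond the termination issue you already flag. First, applying $T_{\mathcal{O}_C(-1)}$ to a general exceptional sheaf $\mathcal{E}$ produces a priori a two-term complex: the triangle $\mathrm{RHom}(\mathcal{O}_C(-1),\mathcal{E})\otimes \mathcal{O}_C(-1)\to\mathcal{E}\to T_{\mathcal{O}_C(-1)}(\mathcal{E})$ only yields a sheaf if the $\mathrm{Ext}$-groups are concentrated in a single degree, and you give no argument for this in case (2) or (4); "the resulting object is again exceptional" is automatic (auto-equivalences preserve exceptionality) but "is again a sheaf of smaller torsion length" is precisely what needs proof. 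Second, case (2) of Kuleshov's classification (positive rank with torsion supported on $(-2)$-curves) is not touched by any argument in the paper, and your proposed peeling there is entirely speculative.

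By contrast, the paper's proof of its special case sidesteps both difficulties by never twisting the sheaf $\mathcal{O}_D$ directly as an abstract exceptional object: it twists the structure sequence $0\to\mathcal{O}_X(-D)\to\mathcal{O}_X\to\mathcal{O}_D\to 0$ and invokes Hochenegger's lemma (Lemma 9.3), which says $T_{\mathcal{O}_C(-1)}$ sends the line bundle $\mathcal{O}(-D)$ to $\mathcal{O}(-D+C)$ when $(-D)\cdot C=1$ and fixes $\mathcal{O}_X$. Each twist therefore implements the purely divisorial operation $D\mapsto D-C_i$ on the effective divisor $D=E+\sum k_iC_i$, the output is visibly the structure sheaf $\mathcal{O}_{D-C_i}$, and termination is immediate because the effective divisor loses a component at each step. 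If you want to push your strategy further, you would need an analogue of Lemma 9.3 for the higher-rank and non-reduced-torsion situations, together with the monovariant you describe; absent that, your proposal establishes strictly less than the paper's Theorem 9.1 and does not prove the conjecture.
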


In paper \cite{CJ}, P.Cao and C.Jiang proved this conjecture in the case of torsion exceptional sheaves on weak del pezzo surface of type $A$ with $K_X^2\geq 3$  (i.e. those $X$ with at most $A_n$-singularities on its canonical model). Precisely, they proved the following theorem:

\begin{theorem}
Let $X$ be a weak del Pezzo surface of type $A$ with $K_X^2\geq 3$ and $\mathcal{E}\in D^b(X)$ a torsion exceptional sheaf on $X$. Then there exists a $(-1)$-curve $D$, an integer $d$ and a sequence of auto-equivalences (called spherical twist functors): $\Phi_1,\ldots,\Phi_n$ associated to line bundles on chain of $(-2)$-curves such that:
$$\mathcal{E}\cong\Phi_1\circ\Phi_2\circ\ldots\circ\Phi_n(O_D(d))$$
\end{theorem}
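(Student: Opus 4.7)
The plan is to induct on the number of $(-2)$-curves appearing in $\Supp(\mathcal{E})$, using Kuleshov's classification (Theorem 1.10) as the backbone. If no $(-2)$-curves appear, then case (3) of Theorem 1.10 immediately yields $\mathcal{E}\cong\O_D(d)$ for some $(-1)$-curve $D$, which handles the base case with $n=0$. Otherwise, we must be in case (4): there is a unique $(-1)$-curve $D$ in the support, together with at least one $(-2)$-curve. Because $X$ is of type $A$, the $(-2)$-curves that occur in $\Supp(\mathcal{E})$ form a chain (a connected subgraph of one of the type-$A$ chains on $X$), and the assumption $K_X^2\geq 3$ bounds the total number of $(-2)$-curves on $X$ by $9-K_X^2\leq 6$.

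For the inductive step, I would single out a $(-2)$-curve $C$ at an extremal position of the support chain, specifically the end farthest from $D$. To such an extremal sub-chain $C_{i_1}+\cdots+C$ terminating at $C$, one can attach a line bundle $\mathcal{L}$ whose underlying sheaf is spherical in $D^b(X)$, producing a spherical twist autoequivalence $\Phi=T_\mathcal{L}$. The central claim to prove is that for an appropriate numerical choice of $\mathcal{L}$, the object $\Phi^{-1}(\mathcal{E})$ is again a torsion exceptional \emph{sheaf} (not merely a complex), whose support is the strictly smaller subconfiguration obtained by removing $C$. Exceptionality is automatic because $\Phi^{-1}$ is an autoequivalence; the content is showing that the distinguished triangle defining $\Phi^{-1}(\mathcal{E})$ collapses to a short exact sequence of sheaves whose quotient no longer involves $C$.

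The hard part will be selecting the spherical object at each stage. One needs to read off, from the numerical invariants of $\mathcal{E}$ restricted to a formal neighborhood of $C$, exactly which line bundle on the sub-chain terminating at $C$ forces the $\Ext$-groups in the twist triangle to concentrate in a single degree. A case-by-case analysis of how the chain of $(-2)$-curves attaches to $D$, together with the multiplicities with which each component appears in $\mathcal{E}$, seems unavoidable. The bound on chain length imposed by $K_X^2\geq 3$ restricts these configurations to a finite and relatively short list, which is exactly what makes the verification tractable; this bound also pinpoints why the method does not extend verbatim to $K_X^2\leq 2$ or to type $D,E$ configurations with longer sub-chains and branching.

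Iterating the inductive step produces spherical twists $\Phi_1,\ldots,\Phi_n$ such that $\Phi_n^{-1}\circ\cdots\circ\Phi_1^{-1}(\mathcal{E})\cong\O_D(d)$, which rearranges to the formula stated in the theorem.
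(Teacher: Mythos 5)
You should first be aware that the paper does not prove this statement at all: it is quoted verbatim from Cao--Jiang \cite{CJ} as motivation, and the paper's own contribution in this direction is the weaker Theorem 1.12 (proved as Theorem 9.1), which treats only torsion exceptional sheaves of the special form $O_D$ for a codimension-one subscheme $D$. The fair comparison is therefore with the proof of Theorem 9.1 and with the strategy of \cite{CJ}; both do follow your overall plan of reducing, via Kuleshov's classification, to a support consisting of one $(-1)$-curve plus a chain of $(-2)$-curves and then peeling off $(-2)$-curves by spherical twists. So your architecture is the right one.

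The genuine gap is that your inductive step is announced rather than carried out. You write that ``the central claim to prove'' is that $\Phi^{-1}(\mathcal{E})$ is again a torsion exceptional sheaf with smaller support, and that ``the hard part will be selecting the spherical object,'' but neither the choice of the line bundle $\mathcal{L}$ on the sub-chain nor the collapse of the twist triangle $\mathbb{R}\Hom(\mathcal{L},\mathcal{E})\otimes\mathcal{L}\to\mathcal{E}\to T_{\mathcal{L}}(\mathcal{E})$ to a short exact sequence of sheaves is established; controlling $\Hom(\mathcal{L},\mathcal{E})$ and $\Ext^1(\mathcal{L},\mathcal{E})$ and the multiplicities of the components is precisely the technical content of \cite{CJ}, and exceptionality of $\Phi^{-1}(\mathcal{E})$ is automatic while sheafness is not --- that is the whole point. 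Note how the special case in the paper closes this step concretely: by Lemma 9.2 one has $D=E+\sum k_iC_i$ with $D$ strong left-orthogonal and $D^2=-1$; choosing $C_1$ with $C_1\cdot D=-1$, one twists the structure sequence $0\to O_X(-D)\to O_X\to O_D\to 0$ by $T_{O_{C_1}(-1)}$ and invokes Hochenegger's Lemma 9.3 (the twist fixes $O_X$ and sends $O_X(-D)$ to $O_X(-D+C_1)$ because $(-D)\cdot C_1=1$) to get $T_{O_{C_1}(-1)}(O_D)\cong O_{D-C_1}$. For a general torsion exceptional sheaf there is no such structure sequence and the sheaf may have rank greater than one along components of its support, so a genuinely new criterion for when the evaluation map is a surjection of sheaves is required; without it the induction does not close. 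Your remarks on why $K_X^2\geq 3$ and type $A$ enter are plausible but likewise unsubstantiated at the level of detail given.
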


We are able to prove the conjecture in a more general situtation if torsion exceptional sheaves $\mathcal{E}\in D^b(X)$ are of the form $O_D$, where $D$ is a codimension $1$ subscheme, not necessarily reduced.

\begin{theorem}
Let $X$ be a weak del Pezzo surface such that $K_X^2\geq 2$, and let $\mathcal{E}$ be a torsion exceptional sheaf of the form $O_D$, where $D$ is subscheme of codimension $1$, not neccesarily reduced. Then there exists a series of spherical twist functors associated to line bundles on $(-2)$-curves: $T_i\in Auteq(D(X))$ and an $(-1)$-curve $D$ on $X$, such that $T_n\circ T_{n-1}\circ\ldots\circ T_1 (O_D)=O_E$.
\end{theorem}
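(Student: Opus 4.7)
The plan is to follow the strategy of Cao and Jiang used in Theorem 1.10, now adapted to allow a non-reduced scheme structure on $D$. By Kuleshov's classification (Theorem 1.9, case (4)), an exceptional torsion sheaf of the form $O_D$, other than the trivial case of $D$ being a $(-1)$-curve, has reduced support consisting of exactly one $(-1)$-curve $E_0$ together with finitely many $(-2)$-curves $C_1,\ldots,C_m$. Since $O_D$ is torsion on a smooth surface and is rigid, purity forces $D$ to be an effective Cartier divisor, so I would write $D=aE_0+\sum_ib_iC_i$ with positive integer multiplicities. A short computation using the resolution $0\to O_X(-D)\to O_X\to O_D\to 0$ (and Riemann--Roch on $X$) gives $\chi(O_D,O_D)=-D^2$, so the numerical exceptional condition collapses to $D^2=-1$; combined with the vanishing of $\mathrm{Ext}^1(O_D,O_D)$, this strongly constrains the admissible tuples $(a,b_i)$, because the $C_i$'s span a sub-root-system of the ADE system attached to the weak del Pezzo surface $X$, which has rank at most $9-K_X^2\leq 7$.

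The inductive step is the heart of the argument: pick a $(-2)$-curve $C_{i_0}$ appearing in $D$, located at a suitable extremity of the dual graph of the configuration, and apply the spherical twist $T_{O_{C_{i_0}}(k)}$ for a carefully chosen integer $k$. I claim the result is again the structure sheaf $O_{D'}$ of an effective Cartier divisor whose complexity (measured by, e.g., total multiplicity $a+\sum b_i$, or number of irreducible $(-2)$-components in the support) is strictly smaller than that of $D$. To verify this, I would carry out the cone computation
\[
T_{O_{C_{i_0}}(k)}(O_D)=\mathrm{cone}\bigl(\mathrm{RHom}(O_{C_{i_0}}(k),O_D)\otimes O_{C_{i_0}}(k)\to O_D\bigr),
\]
restricting to a formal neighborhood of $C_{i_0}$ and matching the outcome with the structure sheaf of $D-C_{i_0}$ (up to an appropriate twist and shift). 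Iterating this procedure terminates when no $(-2)$-curves remain in the support, at which point $D=E_0$ and $O_D=O_{E_0}$, giving the $(-1)$-curve $E$ required in the statement.

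The principal obstacle will be the cone computation in the inductive step. In the reduced setting treated in \cite{CJ}, the relevant $\mathrm{Ext}$ groups and evaluation maps can be read directly off the incidence graph of $\mathbb{P}^1$'s, and the type-$A$ assumption aligns the curves into a chain, which makes the analysis transparent. With arbitrary (possibly non-reduced) $D$, however, the sheaf $O_D$ restricted to a thickening of $C_{i_0}$ has substantially richer cohomological structure, and one must prove that the spherical twist still collapses to a pure sheaf concentrated in a single cohomological degree, rather than producing a genuine two-term complex. The hypothesis $K_X^2\geq 2$ is used crucially here because it limits the possible dual graphs of $(-2)$-curves to a finite, tractable list (allowing all ADE types rather than just type $A$), keeping the required case analysis effective; the same bound on the ADE rank also ensures that the choice of extremal $C_{i_0}$ can always be made and that the inductive measure is well-defined.
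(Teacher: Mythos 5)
Your overall strategy --- peel off $(-2)$-curves one at a time by spherical twists, using total multiplicity as the inductive measure --- is the same as the paper's. But the step you yourself flag as ``the principal obstacle,'' namely showing that $T_{O_{C_{i_0}}(k)}(O_D)$ is again a structure sheaf $O_{D-C_{i_0}}$ rather than a two-term complex, is precisely the content of the proof, and you have not supplied it: the direct cone computation on a formal neighborhood of a thickened $C_{i_0}$ is never carried out, and it is also not how the paper proceeds. The paper sidesteps the cone computation entirely by applying the twist to the two-term resolution $0\to O_X(-D)\to O_X\to O_D\to 0$. By Hochenegger's lemma (Lemma 9.3), the twist $T_{O_C(-1)}$ along a $(-2)$-curve $C$ sends a line bundle $O(B)$ to $O(B)$ when $B\cdot C=0$ and to $O(B+C)$ when $B\cdot C=1$. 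Choosing $C_1$ with $C_1\cdot D=-1$ --- the relevant condition is this intersection number against the full, multiplicity-laden divisor $D$, not the position of $C_1$ as an extremity of the dual graph --- one gets $(-D)\cdot C_1=1$ and $0\cdot C_1=0$, so since the twist is an exact equivalence the resolution transforms into $0\to O_X(-D+C_1)\to O_X\to T_{O_{C_1}(-1)}(O_D)\to 0$, whence $T_{O_{C_1}(-1)}(O_D)\cong O_{D-C_1}$ with no local analysis at all. This also pins down the twist as $T_{O_C(-1)}$ rather than an unspecified $T_{O_C(k)}$.

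Two further gaps. First, the existence of a usable $C_1$ when $D$ is not yet irreducible needs an argument: the paper records that $-1\le C_i\cdot D\le 1$ for every $(-2)$-curve $C_i$ in the support and that if $C_i\cdot D\ge 0$ for all $i$ then $D$ is already a $(-1)$-curve; your extremal-vertex heuristic does not guarantee the intersection number $-1$ that the line-bundle computation requires. Second, to continue the induction you must know that $D_1=D-C_1$ is again a strong left-orthogonal divisor with $D_1^2=-1$; this follows from Proposition 3.4 of the paper (since $C_1\cdot D_1=1$, the cohomologies of $O(\pm D_1)$ and $O(\pm D)$ agree), and it is this preservation statement --- not the bound on the rank of the $(-2)$-root system coming from $K_X^2\ge 2$ --- that keeps the inductive hypothesis alive at each step.
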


The outline of the note is organized as follows: we devote section $2$ to the basic definitions and properties of (numerically, strong) left-orthogonal divisors on surfaces. In section $3$, we give an overview of definitions of weak del Pezzo surfaces and complete classification of (numerically, strong) left-orthogonal divisors on weak del Pezzo surfaces. In section $4$, we give a review of definitions and important properties of toric systems on weak del Pezzo surfaces. In section $5$, we review admssible sequences, whose classification will be the key to prove Theorem $1.2$ and $1.4$. Most part of those sections can be found in \cite{HP} and \cite{EXZ}. 

In section $6$, we overview the definition of anticanonical pseudo height of an exceptional collections and related properties. In section $7$ we prove Theorem $1.2$ and $1.4$. In section $8$, we prove Theorem $1.8$ and give a classification of surfaces which admit $2$-hereditary tilting bundles given by a direct sum of exceptional collections of line bundles. In section $9$, we prove Theorem $1.12$. 

\medskip
{\bf Acknowledgements.} I would like to thank Alexey Elagin, Junyan Xu and Xuqiang Qin for useful discussions, Dylan Spence for valuable comments. I am very grateful to my advisor, Valery Lunts, for his support.

\section{Divisors on surfaces and $r$-classes}

Let $X$ be a smooth projective surface with $p_g=q=0$ over an algebraically closed field $\k$ of characteristic zero.  Let $K_X$ be a canonical divisor on $X$.
Let $d=K_X^2$ be the degree of $X$.
The numerical Picard group $N^1(X)$ of $X$ is a finitely generated abelian group of rank $10-d$.
It is equipped with the intersection form $(D_1,D_2)\mapsto D_1\cdot D_2$. If $X$ is a rational surface, then $rk Pic(X)=10-d$ and the intersection form has signature $(1,9-d)$.
For a divisor $D$ on $X$, we will use the following shorthand notations:
$$H^i(D):=H^i(X,\O_X(D)),\quad h^i(D)=\dim H^i(D),\quad \chi(D)=h^0(D)-h^1(D)+h^2(D).$$
By the Riemann-Roch formula, one has
$$\chi(D)=1+\frac{D\cdot (D-K_X)}2.$$
 
The following notions are introduced in \cite[Definition 3.1]{HP}. 
\begin{definition}
A divisor $D$ on $X$ is \emph{numerically left-orthogonal} if $\chi(-D)=0$ (or equivalently $D^2+D\cdot K_X=-2$).
A divisor $D$ on $X$ is \emph{left-orthogonal} (or briefly \emph{lo}) if $h^i(-D)=0$ for all $i$. A divisor $D$ on $X$ is \emph{strong left-orthogonal} (or briefly \emph{slo}) if $h^i(-D)=0$ for all $i$ and $h^i(D)=0$ for $i\ne 0$.
\end{definition} 

\begin{definition}
We call $D$ an \emph{$r$-class} if $D$ is numerically left-orthogonal and $D^2=r$. \end{definition}

Motivation: if $C\subset X$ is a smooth rational irreducible curve, then the class of $C$ in $\Pic X$ is an $r$-class where $r=C^2$. 

If $C$ is an irreducible reduced curve on $X$ and $r=C^2$, it is said that $C$ is an \emph{$r$-curve}. An $r$-curve is \emph{negative} if $r<0$.

The next propositions are easy consequences of the Riemann-Roch formula, see~\cite[Lemma 3.3, ]{HP} or \cite[Lemma 2.10, Lemma 2.11]{EL}.
\begin{predl}
Let $D$ be a numerically left-orthogonal divisor on $X$. Then 
$$\chi(D)=D^2+2=-D\cdot K_X.$$
\end{predl}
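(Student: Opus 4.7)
The plan is to derive both equalities directly from the Riemann--Roch formula together with the defining condition of numerical left-orthogonality. Since the paper has already recorded that $\chi(D) = 1 + \tfrac12 D \cdot (D - K_X)$, and since the definition gives two equivalent formulations of numerical left-orthogonality (namely $\chi(-D) = 0$, or $D^2 + D \cdot K_X = -2$), the statement is essentially a one-line substitution.

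First I would apply Riemann--Roch to $-D$ to confirm the equivalence stated in Definition 2.1:
\[
\chi(-D) = 1 + \frac{(-D)\cdot(-D - K_X)}{2} = 1 + \frac{D^2 + D\cdot K_X}{2},
\]
so $\chi(-D) = 0$ is indeed equivalent to $D \cdot K_X = -2 - D^2$. Next I would apply Riemann--Roch to $D$:
\[
\chi(D) = 1 + \frac{D^2 - D\cdot K_X}{2}.
\]
Substituting the expression $D \cdot K_X = -2 - D^2$ into this yields
\[
\chi(D) = 1 + \frac{D^2 - (-2 - D^2)}{2} = 1 + \frac{2D^2 + 2}{2} = D^2 + 2,
\]
which gives the first equality. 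The second equality $D^2 + 2 = -D\cdot K_X$ is then immediate from the same relation $D \cdot K_X = -2 - D^2$.

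There is no genuine obstacle here; the statement is a pure consequence of Riemann--Roch and the definition of numerical left-orthogonality. The only thing to be careful about is consistency of signs when passing between $\chi(D)$ and $\chi(-D)$, but this is purely bookkeeping. I would write the proof in two lines, presenting the Riemann--Roch computations for $\chi(D)$ and $\chi(-D)$ side by side and then reading off both equalities.
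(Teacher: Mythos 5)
Your proof is correct and is exactly the argument the paper intends: the paper itself gives no written proof, stating only that the proposition is an easy consequence of the Riemann--Roch formula (citing \cite{HP} and \cite{EL}), and your two applications of Riemann--Roch to $-D$ and $D$ together with the substitution $D\cdot K_X=-2-D^2$ are precisely that computation.
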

\begin{predl}
\label{prop_classes0}
Suppose $D_1,D_2$ are numerically left-orthogonal divisors on $X$. 
Then $D_1+D_2$ is numerically left-orthogonal if and only if $D_1D_2=1$. If that is the case, then 
$$\chi(D_1+D_2)=\chi(D_1)+\chi(D_2)\qquad \text{and}\qquad (D_1+D_2)^2=D_1^2+D_2^2+2.$$
\end{predl}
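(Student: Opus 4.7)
The plan is to reduce everything to the numerical characterization from Definition 2.1, namely that a divisor $D$ is numerically left-orthogonal if and only if $D^2 + D \cdot K_X = -2$, and then to do a straightforward bilinear expansion.

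First I would write down the hypotheses as
\begin{equation*}
D_1^2 + D_1 \cdot K_X = -2, \qquad D_2^2 + D_2 \cdot K_X = -2,
\end{equation*}
and then compute
\begin{equation*}
(D_1+D_2)^2 + (D_1+D_2)\cdot K_X = (D_1^2 + D_1\cdot K_X) + (D_2^2 + D_2\cdot K_X) + 2\,D_1\cdot D_2 = -4 + 2\,D_1\cdot D_2.
\end{equation*}
For $D_1+D_2$ to be numerically left-orthogonal this quantity must equal $-2$, which happens precisely when $D_1\cdot D_2 = 1$. This gives both directions of the ``if and only if.''

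Next, assuming $D_1 \cdot D_2 = 1$, the identity $(D_1+D_2)^2 = D_1^2 + D_2^2 + 2\,D_1\cdot D_2 = D_1^2 + D_2^2 + 2$ is immediate from bilinearity. For the additivity of $\chi$ I would simply invoke the previous proposition, which says $\chi(D) = D^2 + 2$ whenever $D$ is numerically left-orthogonal. Applied to $D_1$, $D_2$, and $D_1 + D_2$, it gives
\begin{equation*}
\chi(D_1+D_2) = (D_1+D_2)^2 + 2 = (D_1^2 + 2) + (D_2^2 + 2) = \chi(D_1) + \chi(D_2).
\end{equation*}

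There is no real obstacle here: the statement is essentially an exercise in Riemann--Roch together with the bilinearity of the intersection form. The only conceptual point worth emphasizing in the write-up is that both the criterion and the additivity of $\chi$ follow from the same identity $\chi(D) = D^2 + 2$ for numerically left-orthogonal divisors, so one does not need to touch higher cohomology at all.
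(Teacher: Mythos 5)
Your proof is correct and follows exactly the route the paper intends: the paper gives no written proof, merely noting that the statement is an easy consequence of the Riemann--Roch formula (citing Lemma 3.3 of Hille--Perling and Lemmas 2.10--2.11 of Elagin--Lunts), and your bilinear expansion of $(D_1+D_2)^2+(D_1+D_2)\cdot K_X$ together with the identity $\chi(D)=D^2+2$ from the preceding proposition is precisely that argument.
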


Let $X$ be a rational surface of degree $d=K_X^2\geq 1$. Denote the set of $(-2)$-classes on $X$ by $R(X)$. It is a root system in some subspace in $N_X=(K_X)^{\perp}\subset \Pic(X)\otimes\R$ (see Manin's book \cite{Ma}) and depends only on $\deg(X)$. If $\deg(X)\le 6$ then $R(X)$ spans $N_X$. 

\begin{table}[h]
\caption{Root systems $R(X)$}
\begin{center}
\begin{tabular}{|c|c|c|c|c|c|c|c|}
		 \hline
			degree & 7 & 6& 5&4&3&2&1\\
		 \hline
			type & $A_1$ & $A_1+A_2$ & $A_4$ & $D_5$ & $E_6$ & $E_7$ & $E_8$\\
		 \hline	
			$|R(X)|$ & $2$ & $8$ & $20$ & $40$ & $72$ & $126$ & $240$\\
		 \hline	
			$|I(X)|$ & $3$ & $6$ & $10$ & $16$ & $27$ & $56$ & $240$\\
		 \hline	
\end{tabular}
\end{center}

\label{table_root}
\end{table}

Denote by $R^{eff}(X)\subset R(X)$ the subset of effective $(-2)$-classes. Denote by $R^{irr}(X)\subset R^{eff}(X)$ the subset of irreducible $(-2)$-curves. 
Denote by $R^{slo}(X)\subset R^{lo}(X)\subset R(X)$ the subsets of strong left-orhogonal and left-orthogonal divisors respectively. Again, $R^{eff}(X),R^{irr}(X),R^{lo}(X)$ and $R^{slo}(X)$ depend on the surface $X$. 

\section{Weak del Pezzo surfaces}
\label{section_wdp}
By definition, a weak del Pezzo surface is a smooth connected projective rational surface~$X$ such that $K_X^2>0$ and $-K_X$ is nef. A del Pezzo surface is a  smooth connected projective rational surface $X$ such that $-K_X$ is ample. We refer to Dolgachev~\cite[Chapter 8]{Do} or Derenthal~\cite{DE} for the main properties of weak del Pezzo surfaces. Every weak del Pezzo surface $X$ except for Hirzebruch surfaces $\mathbb{F}_0$ and $\mathbb{F}_2$ is a blow-up of $\P^2$ at several (maybe infinitesimal) points. That is, there exists a sequence 
$$X=X_n\xra{p_n} X_{n-1}\xra\ldots X_1\xra{p_1} X_0=\P^2$$
of $n$ blow-ups, where $p_k$ is the blow-up of point $P_k\in X_{k-1}$. 
Moreover, the surface $X_n$ as above is a weak del Pezzo surface if and only if $n\le 8$ and for any $k$ the point $P_k$ does not belong to a $(-2)$-curve on $X_{k-1}$.

Let $X$ be a blow-up of $\P^2$ at $n$ (maybe infinitesimal) points. The Picard group $\Pic X$ of $X$ has the standard basis $L,E_1,\ldots,E_n$. The  intersection form is given by 
$$L^2=1,E_i^2=-1,L\cdot E_i=0, E_i\cdot E_j=0 \quad \text{for}\quad i\ne j.$$
We will use the following shorthand notation:
$$E_{i_1\ldots i_k}=E_{i_1}+\ldots+E_{i_k},\quad L_{i_1\ldots i_k}=L-E_{i_1\ldots i_k}.$$ The next lemma is well known, check \cite[Lemma 3.1]{EXZ} for details.

\begin{lemma}\label{neg curve}
Let $C\subset X$ be a negative curve on a weak del Pezzo surface $X$.  Then $C$ is a smooth rational curve and $C^2=-2$ or $-1$. If $X$ is a del Pezzo surface then $C^2=-1$.
\end{lemma}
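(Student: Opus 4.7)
The plan is to prove Lemma 3.1 by a direct application of the adjunction formula combined with the hypothesis that $-K_X$ is nef.

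First, I would write down the adjunction formula for an irreducible reduced curve $C$ on the smooth surface $X$, namely
$$2p_a(C)-2 = C^2 + C\cdot K_X,$$
where $p_a(C)\ge 0$ is the arithmetic genus. Since $X$ is a weak del Pezzo surface, $-K_X$ is nef, so $C\cdot K_X \le 0$. Combined with $p_a(C)\ge 0$, the adjunction formula yields
$$C^2 \;\ge\; -2 - C\cdot K_X \;\ge\; -2.$$
Together with the assumption $C^2<0$, this immediately forces $C^2\in\{-2,-1\}$, which is the bulk of the claim.

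Next I would separate the two cases to pin down $p_a(C)$. If $C^2=-2$, then $-2-C\cdot K_X\le -2$, hence $C\cdot K_X\ge 0$; combined with $C\cdot K_X\le 0$ this gives $C\cdot K_X=0$, and adjunction then gives $p_a(C)=0$. If $C^2=-1$, then $-2-C\cdot K_X\le -1$, i.e.\ $C\cdot K_X\ge -1$, and combined with $-K_X\cdot C\ge 0$ we get $C\cdot K_X\in\{0,-1\}$; but $C^2+C\cdot K_X=-2p_a(C)+(-2)$ is even and equals $-1+C\cdot K_X$, forcing $C\cdot K_X=-1$ and $p_a(C)=0$. In both cases $p_a(C)=0$, so $C$ is smooth rational (an integral curve of arithmetic genus $0$ is automatically $\P^1$).

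Finally, for the del Pezzo case the hypothesis strengthens to $-K_X$ ample, so $-K_X\cdot C>0$ strictly for every irreducible curve, i.e.\ $C\cdot K_X\le -1$. The argument above then excludes $C^2=-2$ (which required $C\cdot K_X=0$), leaving only $C^2=-1$.

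No step looks like a genuine obstacle; the only subtlety is being careful about the parity argument in the $C^2=-1$ subcase, or equivalently just observing that $C^2+C\cdot K_X$ must be even so that $p_a(C)$ is an integer. Everything else is immediate from the nefness (resp.\ ampleness) of $-K_X$ and adjunction.
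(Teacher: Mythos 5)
Your proof is correct: the adjunction formula together with nefness of $-K_X$ gives $C^2\ge -2$, the case analysis correctly pins down $p_a(C)=0$ (and the parity observation handling $C^2=-1$ is sound, since an integral curve with $p_a=0$ is indeed a smooth $\P^1$), and ampleness rules out $C\cdot K_X=0$ in the del Pezzo case. The paper itself gives no proof, deferring to \cite[Lemma 3.1]{EXZ}, and your argument is exactly the standard one that reference uses.
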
 

A weak del Pezzo surface is a del Pezzo surface if and only if it has no $(-2)$-curves. 
Irreducible $(-2)$-curves on $X$ form a subset in $R(X)$ which is a set of simple roots of some root subsystem in $R(X)$. Corresponding positive roots are effective $(-2)$-classes. Two weak del Pezzo surfaces $X$ and $Y$ are said to have the same \emph{type} if there exists an isomorphism $\Pic (X)\to \Pic(Y)$ preserving the intersection form, the canonical class and identifying the sets of negative curves. Thus, weak del Pezzo surfaces are distinguished by the configuration of irreducible $(-2)$-curves. In most cases, the configuration of irreducible $(-2)$-curves determines the type uniquely. Surfaces of different types and with the same configuration of irreducible $(-2)$-curves are distinguished by the number of irreducible $(-1)$-curves. 
A surface of degree $d$ with configuration of $(-2)$-curves $\Gamma$ and with $m$ irreducible $(-1)$-curves is denoted by $X_{d,\Gamma,m}$. Number $m$ is omitted if $d$ and $\Gamma$ determine the type uniquely.
For example, $X_{4,A_2}$ denotes a weak del Pezzo surface of degree $4$ with irreducible $(-2)$-curves forming the diagram $A_2$ and $X_{4,A_3,5}$ denotes a weak del Pezzo surface of degree $4$ with irreducible $(-2)$-curves forming the diagram $A_3$ and with $5$ irreducible $(-1)$-curves. 

\begin{predl}
\label{prop_loslo}
Let $D$ be an $r$-class on a weak del Pezzo surface~$X$ of degree $d$. Then Table~\ref{table_loslo} gives necessary and sufficient condition for $D$ being left-orthogonal and strong left-orthogonal. 
\begin{table}[h]
\label{table_loslo}
\caption{Criteria of left-orthogonality and strong left-orthogonality}
\begin{center}
\begin{tabular}{|c|c|c|}
		 \hline
			$r$ & $D$ is lo? & $D$ is slo?\\
		 \hline
			$\le -3$ & iff $h^0(-D)=0$ & no \\
		 \hline
			$-2$ & iff $h^0(-D)=0$ & iff $h^0(D)=h^0(-D)=0$ \\
		 \hline	
			$-1\le r \le d-3$ & yes & yes \\
		 \hline	
			$\ge d-2$ & iff $h^0(K_X+D)=0$ & iff $h^0(K_X+D)=0$ \\
		 \hline	
\end{tabular}
\end{center}
\end{table}
\end{predl}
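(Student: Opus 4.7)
The plan is a case analysis on $r$, using three standard tools: Riemann-Roch (in the form $\chi(D)=r+2$ and $\chi(-D)=0$ for an $r$-class), Serre duality $h^i(-D)=h^{2-i}(K_X+D)$ and $h^i(D)=h^{2-i}(K_X-D)$, and the nefness of $-K_X$ on a weak del Pezzo surface, which forces $h^0(F)=0$ whenever $F\cdot(-K_X)<0$. The three intersections
\[
(-D)\cdot(-K_X)=-r-2,\qquad (K_X+D)\cdot(-K_X)=r+2-d,\qquad (K_X-D)\cdot(-K_X)=-r-2-d
\]
have signs that dictate, in each range of $r$, which of $h^0(-D)$, $h^2(-D)=h^0(K_X+D)$, and $h^2(D)=h^0(K_X-D)$ are automatically zero.

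For $r\le -3$ the second intersection is negative, forcing $h^2(-D)=0$; then $\chi(-D)=0$ reduces lo to $h^0(-D)=0$, while slo is impossible because $\chi(D)=r+2<0$ clashes with $h^0(D)\ge 0$. For $r=-2$ both the second and third intersections equal $-d<0$, killing $h^2(\pm D)$, and $\chi(\pm D)=0$ yields the stated criteria. For $-1\le r\le d-3$ all three intersections are negative, so $h^0(-D)=h^2(-D)=h^2(D)=0$ automatically, which immediately gives lo; slo then reduces to proving $h^1(D)=0$. For $r\ge d-2$ only the first and third intersections are negative, so $h^0(-D)=h^2(D)=0$ automatically, leaving $h^0(K_X+D)$ (for lo) and, in addition, $h^1(D)$ (for slo) as the only remaining obstructions.

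The main obstacle is therefore establishing $h^1(D)=0$ in the slo cases with $r\ge -1$. The plan is to apply Kawamata-Viehweg vanishing to $D-K_X$: bigness follows from $(D-K_X)^2=3r+4+d>0$ together with $(D-K_X)\cdot(-K_X)=r+2+d>0$, while nefness must be checked against irreducible $(-1)$- and $(-2)$-curves using Lemma~\ref{neg curve}; for a $(-2)$-curve $C$ this reduces to $D\cdot C\ge 0$, and for a $(-1)$-curve to $D\cdot C\ge -1$. In the regime $r\ge d-2$ the extra hypothesis $h^0(K_X+D)=0$ must be brought in, via Serre duality and the Riemann-Roch identities, to close the remaining vanishing. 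I expect this nefness verification to be the technically delicate step, since generic $r$-classes are not automatically nef, and the argument must exploit the explicit configuration of negative curves on a weak del Pezzo surface.
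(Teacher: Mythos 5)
Your reductions via Riemann--Roch, Serre duality and the nefness of $-K_X$ are correct and are exactly the standard first half of the argument: they dispose of the rows $r\le -3$ and $r=-2$ completely and correctly isolate the one remaining obstruction, namely $h^1(D)=0$, in the two rows with $r\ge -1$. (Note that the paper itself gives no proof here beyond citing \cite[Proposition 3.2]{EXZ}, so the comparison is really with that external argument.) The problem is that your proposed engine for the remaining vanishing --- Kawamata--Viehweg applied to $D-K_X$ --- cannot work, because $D-K_X$ is in general \emph{not} nef for the classes in question, even though the conclusion of the proposition still holds for them. Concretely, let $X$ be the degree-$7$ weak del Pezzo surface obtained by blowing up $P_1\in\P^2$ and then $P_2$ infinitely near $P_1$, so that $C=E_1-E_2$ is an irreducible $(-2)$-curve. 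The class $D=E_1$ is a $(-1)$-class, hence falls in the row $-1\le r\le d-3$, but $D\cdot C=-1$, so $(D-K_X)\cdot C=-1<0$ and $D-K_X$ is not nef. Nevertheless $E_1$ \emph{is} slo: $E_2$ is an irreducible $(-1)$-curve (hence slo by the standard sequence $0\to\O_X(-E_2)\to\O_X\to\O_{E_2}\to 0$), $E_2\cdot C=1$, and Proposition~\ref{lemma_removeR} then gives that $E_2+C=E_1$ is slo. So the failure is of the method, not of the statement, and it cannot be repaired by ``checking nefness more carefully'': no nefness holds to check.

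What is actually needed at this point is a mechanism that bypasses vanishing theorems for non-nef classes. The cited proof in \cite{EXZ} (and the structure of this paper) supplies it: Proposition~\ref{lemma_removeR} lets one add or strip irreducible $(-2)$-curves $C$ with $D\cdot C=\pm1$ without changing (strong) left-orthogonality, which reduces a general $r$-class to one meeting all irreducible $(-2)$-curves non-negatively, after which one can argue directly (e.g.\ by exhibiting $D$ as a suitable chain of smooth rational curves via Lemma~\ref{neg curve}, or by a vanishing argument that now applies). Your writeup acknowledges that the nefness verification is ``technically delicate'' but offers no substitute when it fails, and the same unresolved issue recurs in the row $r\ge d-2$, where you do not explain how the hypothesis $h^0(K_X+D)=0$ actually forces $h^1(D)=0$. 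As it stands the proof is therefore incomplete precisely at its central step.
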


\begin{proof}
Check \cite[Proposition 3.2]{EXZ} for details.
\end{proof}

The following consequences of Proposition~\ref{prop_loslo} are very important.
\begin{corollary}
\label{cor_effslo}
On a weak Del Pezzo surface $X$ we have 
$$R=R^{eff}\sqcup(-R^{eff})\sqcup R^{slo};\qquad R^{lo}=R^{eff}\sqcup R^{slo}.$$
\end{corollary}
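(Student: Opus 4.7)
The plan is to reduce both identities to the $r=-2$ row of Table~\ref{table_loslo} in Proposition~\ref{prop_loslo} and then perform a small case analysis on the pair $(h^0(D),h^0(-D))$ for a given $(-2)$-class $D$. That row of the table tells us that $D\in R^{lo}$ iff $h^0(-D)=0$, and that $D\in R^{slo}$ iff both $h^0(D)=0$ and $h^0(-D)=0$. By definition of $R^{eff}$, and using that on the rational surface $X$ we have $q=0$ so numerical and linear equivalence coincide, one has $D\in R^{eff}$ iff $h^0(D)>0$ and similarly $D\in -R^{eff}$ iff $h^0(-D)>0$.

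Hence each $(-2)$-class $D$ falls into one of four mutually exclusive possibilities according to whether each of $h^0(D), h^0(-D)$ is zero or positive. Three of these correspond precisely to $R^{slo}$ (when both vanish), to $R^{eff}\cap R^{lo}$ (when $h^0(D)>0$ but $h^0(-D)=0$), and to $(-R^{eff})\setminus R^{lo}$ (when $h^0(D)=0$ but $h^0(-D)>0$). Granting for the moment that the fourth case $h^0(D)>0$ and $h^0(-D)>0$ is vacuous, the disjoint exhaustion of the three remaining cases immediately yields $R=R^{eff}\sqcup(-R^{eff})\sqcup R^{slo}$, and reading off which of these three cases sit inside $R^{lo}$ gives $R^{lo}=R^{eff}\sqcup R^{slo}$. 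The disjointness in each $\sqcup$ is automatic because the defining conditions on $h^0$ are pairwise incompatible.

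The only substantive step, and the main (if mild) obstacle, is to rule out the fourth case. Suppose, for contradiction, that $D_1\in|D|$ and $D_2\in|{-D}|$ are both effective divisors. Then $D_1+D_2\geq 0$ is linearly equivalent to $0$; since $h^0(\O_X)=1$ on the connected projective $X$, this forces $D_1=D_2=0$, so $D$ is linearly (hence numerically) trivial. But then $D^2=0$, contradicting $D\in R$, i.e.\ $D^2=-2$. The argument uses the rationality assumption in an essential way: the passage from ``$-D$ lies in $R^{eff}$'' to ``$-D$ is represented by an honest effective divisor'' relies on $\Pic^0(X)=0$, which is exactly what is available on a weak del Pezzo surface.
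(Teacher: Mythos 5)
Your proof is correct and follows the route the paper intends: the corollary is stated as an immediate consequence of Proposition~\ref{prop_loslo}, and your case analysis on $(h^0(D),h^0(-D))$ using the $r=-2$ row of Table~2, together with the standard observation that $D$ and $-D$ cannot both be effective for a nonzero class, is exactly the argument being left to the reader. The paper gives no further details, so nothing is missing from your write-up relative to it.
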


\begin{predl}
\label{lemma_removeR}
Let $C\subset X$ be a smooth rational $-2$-curve on a weak del Pezzo surface $X$ and $D$ be a divisor on $X$ such that $C\cdot D=1$. Then 
\begin{enumerate}
\item $D$ is left-orthogonal $\Longleftrightarrow$ $D+C$ is left-orthogonal;
\item $D$ is strong left-orthogonal $\Longleftrightarrow$ $D+C$ is strong left-orthogonal.
\end{enumerate}
\end{predl}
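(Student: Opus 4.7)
The plan is to deduce both equivalences from two short exact sequences obtained by twisting the structure sequence
$$0 \to \O_X(-C) \to \O_X \to \O_C \to 0$$
with $\O_X(-D)$ and with $\O_X(D+C)$, combined with the elementary fact that a line bundle of degree $-1$ on the smooth rational curve $C\cong \P^1$ (smoothness coming from Lemma~\ref{neg curve}) has vanishing cohomology.

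For part (1), I would tensor the structure sequence by $\O_X(-D)$ to obtain
$$0 \to \O_X(-D-C) \to \O_X(-D) \to \O_C(-D) \to 0.$$
The restriction $\O_C(-D) = \O_X(-D)|_C$ is a line bundle of degree $-D\cdot C = -1$ on $C \cong \P^1$, hence isomorphic to $\O_{\P^1}(-1)$, whose cohomology vanishes in every degree. The long exact sequence then yields $H^i(-D-C) \cong H^i(-D)$ for all $i$, which is precisely the equivalence: $D$ is \emph{lo} $\Longleftrightarrow$ $D+C$ is \emph{lo}.

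For part (2), in view of (1) it remains to compare $H^i(D)$ with $H^i(D+C)$. I would tensor the structure sequence by $\O_X(D+C)$ to get
$$0 \to \O_X(D) \to \O_X(D+C) \to \O_C(D+C) \to 0.$$
This time the restriction $\O_C(D+C)$ has degree $(D+C)\cdot C = 1 + (-2) = -1$, so again the right-hand term is $\O_{\P^1}(-1)$ with vanishing cohomology. Hence $H^i(D) \cong H^i(D+C)$ for all $i$, and combined with (1) this gives the \emph{slo} equivalence.

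There is no serious obstacle: the argument is essentially mechanical and rests on the single numerical coincidence that both $-D\cdot C = -1$ and $(D+C)\cdot C = -1$ follow from the two hypotheses $D\cdot C = 1$ and $C^2 = -2$. The weak del Pezzo assumption enters only to guarantee that a $(-2)$-class $C$ supported on an irreducible curve is automatically a smooth rational curve (Lemma~\ref{neg curve}), so that $\P^1$-acyclicity is applicable.
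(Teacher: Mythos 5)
Your argument is correct and is essentially identical to the paper's own proof: both twist the structure sequence of $C$ by $\O_X(-D)$ and by $\O_X(D+C)$, observe that the restricted line bundle has degree $-1$ on $C\cong\P^1$ and is therefore acyclic, and conclude the cohomology isomorphisms $H^i(-D-C)\cong H^i(-D)$ and $H^i(D+C)\cong H^i(D)$. Nothing is missing.
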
  
 
\begin{proof}
\begin{enumerate}
\item Twist the short exact sequence 
\begin{equation}
\label{eq_C}
0\to \O_X(-C)\to \O_X\to \O_C\to 0
\end{equation}
by $\O_X(-D)$. Since $C\cdot (-D)=-1$, one gets
$$0\to \O_X(-C-D)\to \O_X(-D)\to \O_C(-1)\to 0.$$
Hence $H^i(O_X(-C-D))\cong H^i(O_X(-D))$ and the statement follows.  
\item Now twist (\ref{eq_C}) by $\O_X(C+D)$. Since $C\cdot(C+D)=-1$, one gets 
$$0\to \O_X(D)\to \O_X(C+D)\to \O_C(-1)\to 0.$$
Thus $H^i(O_X(C+D))\cong H^i(O_X(D))$ and the statement follows.  
\end{enumerate}
\end{proof}

\section{Exceptional collections and toric systems}

Let $X$ be a numerical rational surface, i.e: $h^1(O_X)=h^2(O_X)=0$. Recall that an object $\EE$  in the derived category $\D^b(\coh(X))$ of coherent sheaves on $X$ is said to be \emph{exceptional} if $\Hom^i(\EE,\EE)=0$ for
 all $i\ne 0$ and  $\Hom(\EE,\EE)=\k$. A sequence $(\EE_1,\ldots,\EE_n)$ of exceptional objects is said to be \emph{exceptional} if $\Hom^i(\EE_l,\EE_k)=0$ for
 all $i$ and $k<l$. An exceptional sequence $(\EE_1,\ldots,\EE_n)$  is said to be \emph{strong exceptional} if $\Hom^i(\EE_l,\EE_k)=0$ for
 all $i\ne 0$ and all $k,l$.

Any line bundle on a numerical rational surface is exceptional. Therefore an ordered collection of line bundles $(\O_X(D_1),\ldots,\O_X(D_n))$ on a numerical rational surface $X$ is exceptional (resp. strong exceptional) if and only if the divisor $D_l-D_k$ is left-orthogonal (resp. strong left-orthogonal) for all $k<l$.

Next we recall the important notion of a \emph{toric system}, introduced by Hille and Perling in \cite{HP}.

For a sequence $(\O_X(D_1),\ldots,\O_X(D_n))$ of line bundles one can consider 
the infinite sequence (called a \emph{helix}) $(\O_X(D_i)), i\in \Z$, defined by the rule
$D_{k+n}=D_k-K_X$. From Serre duality it follows that the collection $(\O_X(D_1),\ldots,\O_X(D_n))$ is exceptional (resp. numerically exceptional) if and only if any collection of the form $(\O_X(D_{k+1}),\ldots,\O_X(D_{k+n}))$ is exceptional (resp. numerically exceptional). One can consider the $n$-periodic sequence $A_k=D_{k+1}-D_k$ of divisors on $X$. Following Hille and Perling, we will consider the finite sequence $(A_1,\ldots,A_n)$ with the cyclic order and will treat the index $k$ in $A_k$ as a residue modulo $n$.  Vice versa, for any sequence $(A_1,\ldots,A_n)$ one can construct the infinite sequence $(\O_X(D_i)), i\in \Z$, with the property $D_{k+1}-D_k=A_{k\mod n}$.

\begin{definition}[See {\cite[Definitions 3.4 and 2.6]{HP}}]
A sequence $(A_1,\ldots,A_n)$ in $\Pic(X)$ is called a \emph{toric system} if $n=\rank K_0^{num}(X)$ and the following conditions are satisfied (where indexes are treated modulo $n$):
\begin{itemize}
\item $A_iA_{i+1}=1$;
\item $A_iA_{j}=0$ if $j\ne i,i\pm 1$;
\item $A_1+\ldots+A_n=-K_X$.
\end{itemize}
\end{definition}

Note that a cyclic shift $(A_k,A_{k+1},\ldots,A_n,A_1,\ldots,A_{k-1})$ of a toric system $(A_1,\ldots,A_n)$  is also a toric system.

\begin{example}
Let $Y$ be a smooth projective surface obtained by blowing up three points $P_1,P_2,P_3$ on $\mathbb{P}^2$, where $P_1,P_2,P_3$ are on the same line $L\subset \mathbb{P}^2$. Then $(L-E_{12},E_2,L-E_{23},E_3,L-E_{13},E_1)$ is a toric system on $Y$. In fact, it is a cyclic strong exceptional toric system on $Y$ by Corollary $4.8$.

\end{example}

The following is proved in \cite[Lemma 3.3]{HP}, see also \cite[Propositions 2.8 and 2.15]{EL}.
\begin{predl}
A sequence $(A_1,\ldots,A_n)$ in $\Pic(X)$ is a toric system if and only if $n=\rank K_0^{num}(X)$ and the corresponding collection $(\O_X(D_1),\ldots,\O_X(D_n))$ is numerically exceptional.
\end{predl}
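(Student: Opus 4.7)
The plan is to prove both implications by reducing to Riemann--Roch together with the criterion that $D$ is numerically left-orthogonal iff $D^{2}+D\cdot K_{X}=-2$. Since $A_{n}=(D_{1}-K_{X})-D_{n}$ by construction, the sum $\sum_{i=1}^{n}A_{i}=-K_{X}$ is automatic, so the third toric-system axiom is free in both directions, and only the intersection relations need to be matched against numerical left-orthogonality of every difference $D_{i}-D_{j}$ with $i>j$.

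For the forward direction, I write $D_{i}-D_{j}=A_{j}+A_{j+1}+\cdots+A_{i-1}$ and compute $(D_{i}-D_{j})^{2}+(D_{i}-D_{j})\cdot K_{X}$ using the toric relations. The cross terms in the square collapse to $2(i-j-1)$, because only the consecutive products $A_{k}A_{k+1}=1$ survive. For the $K_{X}$-intersection I dot $A_{k}$ with the identity $-K_{X}=\sum_{\ell}A_{\ell}$ and use the toric relations to get $A_{k}\cdot K_{X}=-A_{k}^{2}-2$; summed over $k=j,\ldots,i-1$ this cancels the $\sum A_{k}^{2}$ piece and leaves $-2$, as required.

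For the converse, assume every $D_{i}-D_{j}$ with $i>j$ is numerically left-orthogonal. Each $A_{k}=D_{k+1}-D_{k}$ is such a difference, hence itself numerically left-orthogonal. Since $A_{k}+A_{k+1}=D_{k+2}-D_{k}$ is another such difference, Proposition~\ref{prop_classes0} forces $A_{k}A_{k+1}=1$ for $k=1,\ldots,n-1$. For non-adjacent indices $k<j$ with $k+2\le j\le n-1$, both $D_{j}-D_{k}$ and $D_{j+1}-D_{k}$ are numerically left-orthogonal, so applying Proposition~\ref{prop_classes0} to the decomposition $(D_{j}-D_{k})+A_{j}$ gives $(D_{j}-D_{k})\cdot A_{j}=1$; repeating the argument with $k$ replaced by $k+1$ and subtracting yields $A_{k}A_{j}=0$.

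The remaining (and only subtle) point is the wrap-around involving $A_{n}$: the adjacency $A_{n}A_{1}=1$ and the vanishings $A_{k}A_{n}=0$ for $2\le k\le n-2$, none of which arise from a difference $D_{i}-D_{j}$ inside the original collection. I would handle these by extending the previous arguments to one further partial sum, namely $D_{n+1}-D_{k}=-K_{X}-(D_{k}-D_{1})$, whose numerical left-orthogonality for $k\ge 2$ follows from the Riemann--Roch symmetry $\chi(E)=\chi(K_{X}-E)$ applied to the numerically left-orthogonal divisor $D_{k}-D_{1}$. With this extra input Proposition~\ref{prop_classes0} supplies the missing intersection numbers exactly as before. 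I expect this wrap-around step to be the main technical point, since every other relation reduces to the telescoping built into Proposition~\ref{prop_classes0}.
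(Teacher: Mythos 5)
Your argument is correct: the forward direction is the standard Riemann--Roch telescoping computation, and the converse, including the wrap-around terms via $\chi(E)=\chi(K_X-E)$ and Proposition~\ref{prop_classes0}, closes all the cases (note that for $k=1$ the extended difference $D_{n+1}-D_1=-K_X$ fails to be numerically left-orthogonal, but that index is exactly the adjacency $A_nA_1=1$, which you handle separately, so nothing is missing). The paper itself gives no proof and simply cites \cite{HP} and \cite{EL}; the argument in those references is essentially the one you give, so this is the same approach.
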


For the future use we make the following remark on rational surface $X$, see the proof of Proposition 2.7 in~\cite{HP}.
\begin{remark}
For any toric system $(A_1,\ldots,A_n)$ the elements $A_1,\ldots,A_n$ generate $\Pic X$ as abelian group.
\end{remark}

A toric system $(A_1,\ldots,A_n)$ is called \emph{exceptional} (resp. \emph{strong exceptional}) if the corresponding  collection $(\O_X(D_1),\ldots,\O_X(D_n))$ is  exceptional (resp. strong exceptional). Note that exceptional toric systems are stable under cyclic shifts while strong exceptional toric systems are not in general.

A toric system $(A_1,\ldots,A_n)$ is called \emph{cyclic strong exceptional} if the collection 
$$(\O_X(D_{k+1}),\ldots,\O_X(D_{k+n}))$$ 
is strong exceptional for any $k\in \Z$. Equivalently: if all cyclic shifts $$(A_k,A_{k+1},\ldots,A_n,A_1,\ldots,A_{k-1})$$
are strong exceptional.

Notation: for a toric system $(A_1,\ldots,A_n)$ denote 
$$A_{k,k+1,\ldots,l}=A_k+A_{k+1}+\ldots+A_l.$$
We allow $k>l$ and treat 
$[k,k+1,\ldots,l]\subset [1,\ldots, n]$ as a cyclic segment.
Note that $A_{k\ldots l}$ is a numerically left-orthogonal divisor with 
\begin{equation}
\label{eq_Akl2}
A_{k\ldots l}^2+2=\sum_{i=k}^l(A_i^2+2).
\end{equation}

\begin{remark}
If for a toric system $A$ one has $A_i^2\ge -2$ for all $i$, then one has $A_{k,\ldots,l}^2\ge -2$ for any cyclic segment $[k,k+1,\ldots,l]\subset [1,\ldots, n]$. 
\end{remark}

The next proposition is a straightforward consequence of definitions.
\begin{predl}
\label{prop_straightforward}
\begin{enumerate}
\item
A toric system $(A_1,\ldots,A_n)$ is  exceptional if and only if the divisor $A_{k\ldots l}$ is left-orthogonal for all $1\le k<l\le n-1$ and if and only if the divisor $A_{k\ldots l}$ is left-orthogonal for all $k,l\in [1\ldots n], l\ne k-1$.
\item
A toric system $(A_1,\ldots,A_n)$ is strong  exceptional if and only if the divisor $A_{k\ldots l}$ is strong left-orthogonal for all $1\le k<l\le n-1$.
\item
A toric system $(A_1,\ldots,A_n)$ is  cyclic strong exceptional if and only if the divisor $A_{k\ldots l}$ is strong  left-orthogonal for all $k,l\in [1\ldots n], l\ne k-1$.
\end{enumerate}
\end{predl}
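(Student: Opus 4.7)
The plan is to unwind the definitions directly: express each $\Hom^i(\O_X(D_l),\O_X(D_k))$ as $H^i(\O_X(D_k-D_l))$, rewrite $D_k-D_l$ as a signed partial sum of the $A_i$, and match the resulting cohomology vanishing conditions against the definitions of left-orthogonal and strong left-orthogonal. The essential combinatorial identity is
$$D_{l+1}-D_k = A_{k\ldots l},$$
so that $D_k-D_l = -A_{k\ldots l-1}$ when $k<l$ and $D_k-D_l = A_{l\ldots k-1}$ when $k>l$; both the individual $A_i$ and every proper cyclic partial sum are already known to be numerically left-orthogonal.

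For part (1), exceptionality of $(\O_X(D_1),\ldots,\O_X(D_n))$ says $h^i(-A_{k\ldots m})=0$ for every $i$ and every $1\le k\le m\le n-1$, i.e.\ left-orthogonality of every forward segment. The second equivalence is obtained by invoking the observation noted in the text that exceptional toric systems are stable under cyclic shifts: applying the first equivalence to every cyclic shift of $(A_1,\ldots,A_n)$ promotes the lo condition to every proper cyclic segment $A_{k\ldots l}$ with $l\ne k-1$, and the reverse implication is obvious.

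For part (2), strong exceptionality adds the requirement $h^i(A_{l\ldots k-1})=0$ for $i\ne 0$ coming from the pairs $k>l$, which when combined with the lo condition from part (1) packages exactly into strong left-orthogonality of each forward segment $A_{k\ldots m}$. Part (3) then follows by applying part (2) to every cyclic shift, precisely because cyclic strong exceptionality was \emph{defined} to mean that every cyclic shift is strong exceptional.

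I expect no real obstacle here: the authors themselves call this a straightforward consequence of definitions. The only care needed is tracking signs when splitting $D_k-D_l$ into the cases $k<l$ and $k>l$, and being honest about the fact that cyclic-shift stability is used in parts (1) and (3) but is unavailable for strong exceptionality in part (2) — this asymmetry is exactly why part (3) is genuinely stronger than part (2) rather than a formal corollary of it.
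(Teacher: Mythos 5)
Your proposal is correct and follows exactly the route the paper intends: the paper offers no written proof beyond calling the proposition ``a straightforward consequence of definitions,'' and your definitional unwinding via $D_l-D_k=A_{k\ldots l-1}$, the helix/Serre-duality stability of exceptionality under cyclic shifts for (1), and the \emph{definition} of cyclic strong exceptionality for (3) is precisely that intended argument, including the correct observation that (3) does not formally follow from (2) because strong exceptional toric systems are not shift-stable. (You also implicitly and correctly read the condition $1\le k<l\le n-1$ in the statement as $1\le k\le l\le n-1$, since single terms $A_k$ must be included.)
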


An exceptional collection in  $\D^b(\coh(X))$ is said to be \emph{full} if it generates the category  $\D^b(\coh(X))$ as triangulated category. An exceptional collection of $n$ elements is said to be \emph{of maximal length} if $n$ equals to the rank of the numerical Grothendieck group $K_0^{num}(X)$ of $X$. Any full exceptional collection has maximal length, the converse in general is not true. It follows from \cite{KU} that on a weak del Pezzo surface of degree $\ge 2$ any exceptional collection of maximal length is full. 
By our definition, we consider only toric systems that correspond to exceptional collections of maximal length. Recall that for a rational surface $X$ one has
$$\rank K_0(X)=\rank\Pic(X)+2=12-\deg(X).$$

\begin{theorem}
\label{th_checkonlyminustwo}
Let $A=(A_1,\ldots,A_n)$ be a toric system on a weak del Pezzo surface $X$. Suppose $A^2_i\ge -2$ for  $1\le i\le n-1$. 
Then: 
\begin{enumerate}
\item
$A$ is exceptional if and only if the following holds: 
for any cyclic segment $[k\ldots l]\subset [1\ldots n]$ such that one of the next conditions holds:
\begin{enumerate}
\item $1\le k\le l\le n-1$ and  $A_{k\ldots l}^2=-2$,
\item $k>l$ and $A_{k\ldots n\ldots l}^2=A_n^2\le -2$
\end{enumerate}
the divisor $A_{k\ldots l}$ is left-orthogonal (or equivalently: the divisor $A_{k\ldots l}$ is not anti-effective).
\item
$A$ is strong exceptional if and only if $A$ is exceptional and the following holds: for any $1\le k\le l\le n-1$ such that $A_{k\ldots l}^2=-2$, the divisor $A_{k\ldots l}$ is strong left-orthogonal (or equivalently: the divisor $A_{k\ldots l}$ is not effective nor anti-effective).
\end{enumerate}

Suppose moreover that $A^2_i\ge -2$ for  all $i$. Then
\begin{enumerate}
\item[(3)]
$A$ is exceptional if and only if the following holds: for any cyclic segment $[k\ldots l]\subset [1\ldots n]$ such that $A_{k\ldots l}^2=-2$, the divisor $A_{k\ldots l}$ is left-orthogonal (or equivalently: the divisor $A_{k\ldots l}$ is not anti-effective).
\item[(4)]
$A$ is cyclic strong exceptional if and only if the following holds: for any cyclic segment $[k\ldots l]\subset [1\ldots n]$ such that $A_{k\ldots l}^2=-2$, the divisor $A_{k\ldots l}$ is strong left-orthogonal (or equivalently: the divisor $A_{k\ldots l}$ is not effective nor anti-effective).
\item[(5)]
For any cyclic segment $[k\ldots l]\subset [1\ldots n]$, $-2\leq A_{k\ldots l}^2\leq d-2$, where $d=K_X^2$
\end{enumerate}
\end{theorem}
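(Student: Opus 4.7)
The plan is to combine Proposition~\ref{prop_straightforward}, which translates (cyclic) strong exceptionality into lo/slo conditions on the partial sums $A_{k\ldots l}$, with Proposition~\ref{prop_loslo} (Table~\ref{table_loslo}), which decides lo and slo of a numerically left-orthogonal divisor from its self-intersection. The main computational bridge between the two is the additivity formula \eqref{eq_Akl2}, $A_{k\ldots l}^2 + 2 = \sum_{i=k}^l (A_i^2+2)$.

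I would begin with part~(5). The lower bound $A_{k\ldots l}^2 \ge -2$ is immediate from \eqref{eq_Akl2} and the hypothesis $A_i^2 \ge -2$. For the upper bound, I would expand $K_X^2 = d$ via $-K_X = \sum_i A_i$ and the toric-system intersection relations to obtain the global identity $\sum_{i=1}^n (A_i^2 + 2) = d$. Splitting $[1,n]$ cyclically into a segment $[k,l]$ and its complement $\mathrm{comp}$, and computing that they meet with intersection product equal to $2$, produces the key complementarity identity
$$A_{k\ldots l}^2 + A_{\mathrm{comp}}^2 = d - 4.$$
Combined with the lower bound applied to the complement, this gives $A_{k\ldots l}^2 \le d - 2$.

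For parts (3) and (4), I would use Proposition~\ref{prop_straightforward} in its cyclic form. Every cyclic partial sum has square in $[-2, d-2]$ by (5), so the three rows of Table~\ref{table_loslo} split cleanly: segments in the middle range $-1 \le A^2 \le d-3$ are automatically both lo and slo; segments with $A^2 = -2$ reduce to the ``not anti-effective'' check for lo and to ``neither effective nor anti-effective'' for slo; segments with $A^2 = d-2$ reduce, via $K_X + A_{k\ldots l} = -A_{\mathrm{comp}}$, to ``$A_{\mathrm{comp}}$ not anti-effective'', and the complementarity identity forces $A_{\mathrm{comp}}^2 = -2$ so that this test is of exactly the same type already covered. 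By cyclic symmetry, the last case is subsumed by the $(-2)$-segment condition, yielding (3) and (4).

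For parts (1) and (2), only the partial hypothesis $A_i^2 \ge -2$ for $i \le n-1$ is available, so I would work with the non-cyclic form of Proposition~\ref{prop_straightforward} and restrict attention to non-wrapping segments $[k,l] \subset [1,n-1]$. These still satisfy $A^2 \ge -2$ by \eqref{eq_Akl2}. Repeating the Table~\ref{table_loslo} case analysis produces condition (a) from the $A^2 = -2$ regime; the middle range is automatic; the regime $A^2 \ge d-2$ is converted through the complement identity into a ``not anti-effective'' check on the wrapping complement, which under the partial hypothesis on the $A_i$ takes the specific form described in (b). Part (2) follows by the same scheme with the slo criterion replacing the lo criterion at $A^2 = -2$. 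The main delicate step is the wrap-complement bookkeeping underlying (b): one must verify that every non-wrapping segment in $[1,n-1]$ with $A_{k\ldots l}^2 \ge d-2$ produces a complement of the claimed form $A_{\mathrm{comp}}^2 = A_n^2$, by showing that the extra terms $A_i^2+2$ appearing in \eqref{eq_Akl2} for the wrap must vanish whenever $A_{\mathrm{comp}}^2$ hits $-2$ from above, so no further cases slip through.
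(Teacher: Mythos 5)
The paper itself gives no argument here (it defers entirely to \cite[Theorem 4.7]{EXZ}), so I can only assess your proposal on its own terms. Your treatment of (5), (3), (4), and of (2) modulo (1) is sound and is surely close to the intended argument: the identity $\sum_i(A_i^2+2)=d$ and the complementarity relation $A_{k\ldots l}^2+A_{\mathrm{comp}}^2=d-4$ do follow from the toric-system axioms, and combining them with Proposition~\ref{prop_straightforward}, the additivity formula \eqref{eq_Akl2}, and the case table of Proposition~\ref{prop_loslo} correctly reduces everything in the cyclic setting to $(-2)$-segments, since the top row $r\ge d-2$ always converts into the bottom row $r=-2$ for the complementary cyclic segment. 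Note also that in (2) the regime $A_{k\ldots l}^2\ge d-2$ needs no separate work at all, because there lo and slo are the same condition and lo-ness is already part of the hypothesis ``$A$ is exceptional.''

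The genuine gap is in the ``if'' direction of (1), exactly at the step you flag as delicate. For a non-wrapping segment $S\subset[1,n-1]$ with $A_S^2\ge d-2$, left-orthogonality is equivalent to $h^0(-A_{\mathrm{comp}})=0$ where $\mathrm{comp}$ is the wrapping complement, and all you know is $A_n^2\le A_{\mathrm{comp}}^2\le -2$. Your assertion that ``the extra terms $A_i^2+2$ must vanish,'' i.e.\ that $A_{\mathrm{comp}}^2=A_n^2$ automatically, is false: if, say, $A_n^2=-4$ and the complement also contains an index $j\ne n$ with $A_j^2=-1$, then $A_{\mathrm{comp}}^2=-3$, the corresponding $S$ has $A_S^2=d-1\ge d-2$, and neither condition (a) nor condition (b) applies to $\mathrm{comp}$. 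So conditions (a) and (b) do not, by the table alone, cover all the checks your reduction generates; this is precisely where the theorem has real content when $A_n^2\le -3$. What is missing is an argument that anti-effectiveness of a wrapping segment $C$ with $A_n^2<A_C^2\le -2$ forces anti-effectiveness of some segment already covered by (a) or (b) --- e.g.\ a peeling lemma saying that if $-A_C$ is effective and $j\ne n$ is an endpoint of $C$ with $A_j^2\ge -1$, then $-A_{C\setminus\{j\}}$ is still effective (using that $-K_X$ is nef, so a nonzero anti-effective numerically left-orthogonal divisor must have self-intersection $\le -2$, which rules out $-A_j$ itself being effective), together with an induction on the number of such indices. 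Without some such reduction the proof of (1), and hence the precise form of condition (b), is not established.
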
 
\begin{proof}

See \cite[Theorem 4.7]{EXZ} for details.

\end{proof}

\begin{corollary}
Let $A=(A_1,\ldots,A_n)$ be a strong exceptional toric system with $A_n^2\ge -1$. Then $A$ is cyclic strong exceptional.
\end{corollary}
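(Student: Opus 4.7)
The plan is to verify the hypothesis of Theorem~\ref{th_checkonlyminustwo}(4): every cyclic segment $[k,\ldots,l]$ with $A_{k\ldots l}^2=-2$ is strong left-orthogonal. Once this is done together with $A_i^2\ge -2$ for all $i$, the theorem directly yields cyclic strong exceptionality.

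First I would check $A_i^2\ge -2$ for every $i$. For $i<n$, apply Proposition~\ref{prop_straightforward}(2) to the one-element straight segment $\{i\}$ to see that $A_i$ is strong left-orthogonal; the table in Proposition~\ref{prop_loslo} then forces $A_i^2\ge -2$. For $i=n$ this is the hypothesis $A_n^2\ge -1\ge -2$.

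Next I would split the cyclic segments $[k,\ldots,l]$ into two types. For a straight segment with $1\le k\le l\le n-1$, strong exceptionality of $A$ together with Proposition~\ref{prop_straightforward}(2) already gives $A_{k\ldots l}$ strong left-orthogonal whenever $A_{k\ldots l}^2=-2$. The substantive case is the wrap-around case $k>l$, where I would show the stronger statement that the self-intersection can never equal $-2$ at all.

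For the wrap-around case, note that the underlying index set $S=[k,\ldots,l]$ always contains $n$. Applying equation~\eqref{eq_Akl2} (valid for arbitrary cyclic segments, including wrap-around ones, thanks to the relation $A_n\cdot A_1=1$ built into the toric-system axioms) gives
\[
A_{k\ldots l}^2+2\;=\;\sum_{i\in S}(A_i^2+2)\;\ge\;A_n^2+2\;\ge\;1,
\]
because every summand $A_i^2+2$ is non-negative by the first step and the summand indexed by $n$ is at least $1$ by the hypothesis $A_n^2\ge -1$. Hence $A_{k\ldots l}^2\ge -1>-2$, so the condition of Theorem~\ref{th_checkonlyminustwo}(4) is vacuously satisfied for wrap-around segments. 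The only subtlety worth pointing out is the correct use of~\eqref{eq_Akl2} on wrap-around segments; beyond that the argument is a one-line numerical check.
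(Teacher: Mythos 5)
Your proof is correct and is essentially the paper's own argument: both reduce the claim to Theorem~\ref{th_checkonlyminustwo}(4) by observing, via \eqref{eq_Akl2} together with $A_i^2\ge -2$ for $i\le n-1$ and $A_n^2\ge -1$, that no cyclic segment containing the index $n$ can have self-intersection $-2$, so the only segments that matter lie in $[1,\ldots,n-1]$ and are strong left-orthogonal by the strong exceptionality hypothesis (the paper compresses this to ``clearly,'' while you supply the computation). One cosmetic remark: your case split nominally omits the straight segments $[k,\ldots,n]$ ending at $n$, but your computation for segments containing $n$ applies to them verbatim, so nothing is lost.
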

\begin{proof}
Use Theorem~\ref{th_checkonlyminustwo}. Clearly, any cyclic segment $[k,\ldots,l]\subset [1,\ldots,n]$ such that $A_{k,\ldots,l}=-2$ lies in $[1,\ldots,n-1]$. Therefore $A_{k,\ldots,l}$ is slo by assumptions.
\end{proof}

\begin{corollary}
Any toric system $(A_1,\ldots,A_n)$  with $A_i^2\ge -2$ for all $i$ on a del Pezzo surface is cyclic strong exceptional. 
\end{corollary}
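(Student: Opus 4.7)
The plan is to reduce everything to Theorem~\ref{th_checkonlyminustwo}(4), which under the standing hypothesis $A_i^2\ge -2$ for all $i$ asserts that $A$ is cyclic strong exceptional if and only if every cyclic subsum $A_{k\ldots l}$ of self-intersection $-2$ is strong left-orthogonal. Thus the entire task is to verify that on a del Pezzo surface every $(-2)$-class is automatically slo; once this is known the corollary is immediate.

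To establish this, I would use ampleness of $-K_X$. For any $(-2)$-class $D$ on $X$, the Riemann--Roch computation in Proposition 2.3 gives $-D\cdot K_X=D^2+2=0$, i.e.\ $D\cdot K_X=0$. If $D$ were a nonzero effective class, ampleness of $-K_X$ would force $D\cdot(-K_X)>0$, contradicting $D\cdot K_X=0$. Hence $R^{eff}(X)=\emptyset$, and the dichotomy $R=R^{eff}\sqcup(-R^{eff})\sqcup R^{slo}$ of Corollary~\ref{cor_effslo} collapses to $R(X)=R^{slo}(X)$.

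Finally, for any cyclic segment $[k\ldots l]\subset[1\ldots n]$, equation~(\ref{eq_Akl2}) shows that $A_{k\ldots l}$ is numerically left-orthogonal; so whenever $A_{k\ldots l}^2=-2$ the class $A_{k\ldots l}$ lies in $R(X)=R^{slo}(X)$ and is thus strong left-orthogonal. This verifies the criterion of Theorem~\ref{th_checkonlyminustwo}(4), and cyclic strong exceptionality follows. The argument is entirely formal given the preceding machinery; the only conceptual input is the ampleness of $-K_X$ ruling out effective $(-2)$-classes, and there is no real obstacle beyond assembling these ingredients in the right order.
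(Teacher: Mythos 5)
Your proposal is correct and follows essentially the same route as the paper: the paper's proof is the one-line observation that there are no effective or anti-effective $(-2)$-classes on a del Pezzo surface, which combined with Theorem~\ref{th_checkonlyminustwo}(4) gives the claim. You simply spell out why that observation holds (ampleness of $-K_X$ forces $D\cdot(-K_X)>0$ for nonzero effective $D$, while a $(-2)$-class has $D\cdot K_X=0$) and pass through Corollary~\ref{cor_effslo} to conclude $R=R^{slo}$, which is a harmless elaboration of the same argument.
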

\begin{proof}
There are no effective and anti-effective $(-2)$-classes on a del Pezzo surface.
\end{proof}

We finish this section with a very important theorem which is due to Hille and Perling \cite[Theorem 3.5]{HP} for rational surfaces  and to C.\,Vial  \cite[theorem 3.5]{V} for the general case.
\begin{theorem}
\label{theorem_HP}
Let $A=(A_1,\ldots,A_n)$ be a toric system of maximal length on a smooth projective surface $X$ such that $\chi(\O_X)=1$ ($X$ is not necessarily rational). Then there exists a smooth projective toric surface $Y$ with torus-invariant divisors $T_1,\ldots,T_n$ such that $A_i^2=T_i^2$ for all $i$.
\end{theorem}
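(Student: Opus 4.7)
The plan is to construct the fan of $Y$ directly from the numerical data $a_i := -A_i^2 \in \Z$, which should play the role of $-T_i^2$ on the sought-for toric surface. On any smooth complete toric surface, the primitive ray generators $v_1,\ldots,v_n \in N := \Z^2$ satisfy the wall relations
$$v_{i-1} + a_i v_i + v_{i+1} = 0$$
(cyclic indices). Accordingly, I would fix $v_1,v_2 \in \Z^2$ with $\det(v_1,v_2) = 1$ and define $v_i$ for $i \ge 3$ by the recursion $v_{i+1} := -a_i v_i - v_{i-1}$; then $\det(v_i,v_{i+1}) = 1$ for all $i$, so smoothness of the cones is automatic. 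The statement follows once I verify (i) \emph{closure}: $v_{n+1} = v_1$ and $v_{n+2} = v_2$; and (ii) \emph{completeness}: the rays $v_1,\ldots,v_n$ wind exactly once around the origin in cyclic order. Granted (i) and (ii), the $v_i$'s are the primitive generators of a smooth complete fan $\Sigma$, and $Y := Y_\Sigma$ is the desired toric surface with invariant divisors $T_i$ satisfying $T_i^2 = -a_i = A_i^2$.

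For closure, the key observation is that $\Z$-linear relations in $\Pic(X)$ among the $A_i$'s translate into cyclic solutions of the recurrence. Any relation $\sum c_iA_i = 0$ in $\Pic(X)$, intersected with each $A_j$, produces the identity $c_{j-1} - a_j c_j + c_{j+1} = 0$ (cyclically), which is exactly the second-order recurrence underlying the $v_i$'s. By the maximal-length hypothesis, $n = \rank K_0^{num}(X) = \rank\Pic(X) + 2$, so the space of $\Q$-linear relations among the $A_i$'s has rank at least $2$; picking two $\Z$-linearly independent integer cyclic solutions $c^{(1)},c^{(2)}$ and setting $v_i := (c_i^{(1)},c_i^{(2)}) \in \Z^2$ produces vectors which, by construction, satisfy both the recursion and the closing-up condition. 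A parallel consistency check comes from squaring $\sum A_i = -K_X$ (using $A_i A_{i+1} = 1$ cyclically), which gives $\sum a_i = 2n - K_X^2$; combined with Noether's formula this matches the universal identity $\sum a_i = 3n - 12$ on any smooth complete toric surface with $n$ rays.

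For completeness, I would induct on $n$ using the Hille--Perling \emph{normalization}: if some $a_i = 1$ (equivalently $A_i^2 = -1$), the wall relation reads $v_{i+1} = -v_i - v_{i-1}$, so $v_{i-1},v_{i+1}$ already form a basis of $\Z^2$ and removing $v_i$ yields a smooth complete fan with $n-1$ rays and numerical data $(a_1,\ldots,a_{i-1}-1,a_{i+1}-1,\ldots,a_n)$; reconstructing $Y$ from the smaller surface amounts to blowing up the torus-fixed point corresponding to the wall $(v_{i-1},v_{i+1})$. The base cases $n = 3$ and $n = 4$ yield $\P^2$ and a Hirzebruch surface, respectively, directly from the numerical constraints. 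The existence of an index with $a_i = 1$ for $n \ge 5$ is forced by the identity $\sum a_i = 3n - 12$ together with the winding-number-one hypothesis, which together bound the $a_i$'s from above.

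The main obstacle, especially for non-rational $X$, is the closure argument: one must guarantee that the integer cyclic solution lattice has rank exactly $2$ and admits a $\Z$-basis with $\det(v_1,v_2) = \pm 1$, so that the $v_i$'s are \emph{primitive} in $\Z^2$. This requires carefully separating numerical equivalence from $\Z$-linear equivalence in $\Pic(X)$, since when $X$ is not rational the $A_i$'s may fail to generate $\Pic(X)$ integrally. A further subtlety is the winding-number part of (ii) when some $a_i < 0$ (i.e.\ $A_i^2 > 0$), where the piecewise-linear path traced by the $v_i$'s can bend in both directions; the inductive normalization finesses this, but a direct geometric verification would be considerably more delicate.
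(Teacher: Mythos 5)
First, a point of reference: the paper does not prove this statement at all --- it is quoted verbatim from \cite{HP} (Proposition 2.7) and \cite{V} (Theorem 3.5) --- so your proposal can only be compared with those arguments, whose skeleton you have correctly identified: realize the $A_i^2$ as wall data of a would-be fan, deduce closure from the rank of the relation lattice, and deduce completeness by contracting $(-1)$-rays. Within that skeleton, however, there are real gaps. A small one: the wall relation on a smooth complete toric surface is $v_{i-1}+T_i^2v_i+v_{i+1}=0$, so your recursion $v_{i+1}=-a_iv_i-v_{i-1}$ with $a_i=-A_i^2$ produces $T_i^2=-A_i^2$; the sign must be flipped. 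More seriously, your closure argument defines the $v_i$ from two \emph{integral} relations $c^{(1)},c^{(2)}$, and, as you yourself concede, nothing then guarantees $\det(v_i,v_{i+1})=\pm1$ or primitivity --- and for non-rational $X$ the $A_i$ need not generate $\Pic(X)$ or $N^1(X)$ integrally, so the integral relation lattice really is uncontrolled. This gap is fixable, and the fix reconciles your two competing constructions: since the relation space has rank $\ge 2$ over $\Q$ (this only uses $n=\operatorname{rk}N^1(X)+2$ and the intersection identities $c_{j-1}+A_j^2c_j+c_{j+1}=0$), the monodromy matrix of the order-two recurrence fixes two independent vectors of $\Q^2$ and hence is the identity; therefore the sequence generated from the unimodular seed $v_1=(1,0)$, $v_2=(0,1)$ --- which automatically has all consecutive determinants equal to $1$ --- also closes up. You should run the argument in that order rather than building the $v_i$ out of the relations themselves.

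The serious gap is completeness. You derive the existence of an index with $A_i^2=-1$ for $n\ge 5$ from ``the identity $\sum a_i=3n-12$ together with the winding-number-one hypothesis,'' but winding number one is precisely the statement to be proved, so the induction is circular as written; and the contraction step also silently assumes the truncated sequence again satisfies the inductive hypotheses. Moreover, even the identity $\sum A_i^2=12-3n$ is not free: it amounts to $K_X^2=12-n$, i.e.\ $c_2(X)=\operatorname{rk}N^1(X)+2$ via Noether's formula, which holds for rational surfaces but for a general surface with $\chi(\O_X)=1$ is exactly the nontrivial numerical input of \cite{V} (one must control $q=p_g$ and the gap between $\rho(X)$ and $b_2(X)$). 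A non-circular route to completeness, once closure and unimodularity are in hand, is the classical ``twelve'' identity: for any closed cycle of primitive vectors with $\det(v_i,v_{i+1})=1$ one has $\sum_iT_i^2=12k-3n$ where $k\ge1$ is the winding number, and comparing with $\sum_iA_i^2=12-3n$ forces $k=1$. Alternatively one can salvage your induction, but then the existence of a $(-1)$-index for $n\ge5$ must be extracted combinatorially from the closed unimodular cycle itself, not from the completeness you are trying to establish. As it stands, the proposal records the correct strategy and honestly flags its own weak points, but neither of the two essential verifications (unimodular closure, winding number one) is actually proved.
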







\section{Admissible sequences}

For a sequence $(a_1,\ldots,a_n)$ of integers we define the \emph{$m$-th elementary augmentation} as follows:

\begin{itemize}
\item $augm_1(a_1,\ldots,a_n) = (-1,a_1-1,a_2,\ldots,a_{n-1},a_n-1)$;
\item $augm_m(a_1,\ldots,a_n)=(a_1,\ldots,a_{m-2},a_{m-1}-1,-1,a_m-1,a_{m+1},\ldots,a_n)$ for $2\le m\le n$; 
\item $augm_{n+1}(a_1,\ldots,a_n)=(a_1-1,a_2,a_3,\ldots,a_n-1,-1)$.
\end{itemize}

We call a sequence  \emph{admissible} if it can be obtained from the sequence $(0,k,0,-k)$ or $(k,0,-k,0)$ by applying several elementary augmentation.
It is not hard to see that admissible sequences are stable under cyclic shifts:
$$sh(a_1,\ldots a_n)=(a_2,\ldots,a_m,a_1)$$
and symmetries:
$$sym(a_1,\ldots a_n)=(a_{n-1},a_{n-2},\ldots,a_1,a_n).$$
Also note that for an admissible sequence $(a_1,\ldots,a_n)$ one has 
$$\sum_{i=1}^na_i=12-3n.$$
We call a sequence $a_1,\ldots,a_n$ of integers \emph{strong admissible} if it is admissible and $a_i\ge -2$ for $1\le i\le n-1$. 
We call a sequence $a_1,\ldots,a_n$ of integers \emph{cyclic strong admissible} if it is admissible and $a_i\ge -2$ for $1\le i\le n$. 
Note that cyclic strong admissible sequences are stable under cyclic shifts while strong admissible are not. Strong admissible and cyclic strong admissible sequences are stable under symmetries.

For a toric system $A=(A_1,\ldots,A_n)$ on $X$, we denote 
$$A^2=(A_1^2,\ldots,A_n^2).$$

The motivation for considering admissible sequences is the following. Let $Y$ be a toric surface with torus invariant divisors $T_1,\ldots,T_n$. Then the sequence $T^2=(T_1^2,\ldots,T_n^2)$ is admissible. Indeed, for $Y$ a Hirzebruch surface one has $n=4$ and the statement is clear. Otherwise $Y$ has a torus-invariant $(-1)$-curve $E$. Let $E=T_k$, and consider the blow-down $Y'$ of $E$. The torus-invariant divisors on $Y'$ are $T'_1,\ldots,T'_{k-1},T'_{k+1},\ldots,T'_n$, and $(T'_{k\pm 1})^2=T_{k\pm 1}^2+1$, $(T'_{i})^2=T_{i}^2$ otherwise. Therefore $T^2=augm_k((T')^2)$, and we proceed by induction.

Further, from Theorem~\ref{theorem_HP} it follows now that for any toric system $A$ the sequence $A^2$ is admissible. Suppose that $A$ is a strong exceptional toric system, then
for $1\le i\le n-1$ the divisor $A_i$ is slo, hence $A_i^2=\chi(A_i)-2\ge -2$ and the sequence $A^2$ is strong admissible. The same holds for cyclic strong. 

The next Proposition can be checked directly or deduced from \cite[Table 1]{HP}.

\begin{predl}
\label{prop_csadm}
All cyclic strong admissible sequences are, up to cyclic shifts and symmetries, in  Table~\ref{table_csadm}.
\begin{table}[h]
\caption{Cyclic strong admissible sequences}
\begin{center}
\begin{tabular}{|c|c|}
		 \hline
			type & sequence\\
		 \hline
			$\P^1\times \P^1$ & $(0,0,0,0)$ \\
		 \hline	
			$F_1$ & $(0,1,0,-1)$ \\
		 \hline	
			$F_2$ & $(0,2,0,-2)$ \\
		 \hline	
			5a & $(0,0,-1,-1,-1)$ \\
		 \hline	
			5b & $(0,-2,-1,-1,1)$ \\
		 \hline	
			6a & $(-1,-1,-1,-1,-1,-1)$ \\
		 \hline	
			6b & $(-1,-1,-2,-1,-1,0)$ \\
		 \hline	
			6c & $(-2,-1,-2,-1,0,0)$ \\
		 \hline	
			6d & $(-2,-1,-2,-2,0,1)$ \\
		 \hline	
			7a & $(-1,-1,-2,-1,-2,-1,-1)$ \\
		 \hline	
			7b & $(-2,-1,-2,-2,-1,-1,0)$ \\
		 \hline	
			8a & $(-2,-1,-2,-1,-2,-1,-2,-1)$ \\
		 \hline	
			8b & $(-2,-1,-1,-2,-1,-2,-2,-1)$ \\
		 \hline	
			8c & $(-2,-1,-2,-2,-2,-1,-2,0)$ \\
		 \hline	
			9 & $(-2,-2,-1,-2,-2,-1,-2,-2,-1)$ \\
		 \hline	
\end{tabular}
\end{center}
\label{table_csadm}
\end{table}

In particular, if a surface $X$ has a toric system $A=(A_1,\ldots,A_n)$ with $A_i^2\ge -2$ for all~$i$ then $n\le 9$ and $\deg(X)\ge 3$.
\end{predl}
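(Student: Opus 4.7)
The plan is to prove the classification by induction on the length $n$, following the recursive structure built into the definition of admissible sequences. For the base case $n=4$, the definition says directly that any admissible sequence is, up to cyclic shift and symmetry, of the form $(0,k,0,-k)$. The cyclic strong requirement $a_i \ge -2$ for all $i$ forces $|k| \le 2$, giving exactly the three entries labelled $\P^1\times\P^1$, $F_1$, $F_2$ in Table~\ref{table_csadm}.

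For the inductive step at length $n \ge 5$, every admissible sequence of length $n$ arises as $\mathrm{augm}_m$ of some admissible sequence $(b_1,\ldots,b_{n-1})$ of length $n-1$. The operation $\mathrm{augm}_m$ inserts a $-1$ at position $m$ and decreases the two cyclically adjacent entries $b_{m-1}$ and $b_m$ each by one. Hence if the length-$n$ sequence is cyclic strong, i.e. all entries $\ge -2$, then necessarily $b_{m-1}, b_m \ge -1$, and the length-$(n-1)$ sequence is itself cyclic strong admissible (after noting that the cyclic strong condition is preserved by the inverse of augmentation in the other direction, since removing a $-1$ only increases neighboring entries). By induction the predecessor appears in the table. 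Thus the classification reduces to the finite task: for each entry of Table~\ref{table_csadm} at length $n-1$, list every cyclically adjacent pair $(b_{m-1},b_m)$ with both entries $\ge -1$, perform the augmentation, and check that the result appears (up to cyclic shift and symmetry) at length $n$ of the table. This is a direct bookkeeping exercise that can be organized as a tree rooted at the three length-$4$ sequences.

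The upper bound $n \le 9$ falls out of this process: the sole length-$9$ sequence $(-2,-2,-1,-2,-2,-1,-2,-2,-1)$ has every $-1$ flanked by $-2$'s, so it contains no cyclically adjacent pair both $\ge -1$, and no further augmentation is admissible without violating cyclic strongness. Since for a (numerical) rational surface $n = \mathrm{rank}\,K_0^{\mathrm{num}}(X) = 12-\deg(X)$, the bound $n\le 9$ translates to $\deg(X) \ge 3$.

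The only genuine difficulty is combinatorial hygiene: when listing augmentations one must systematically quotient by the symmetry $\mathrm{sym}$ and the cyclic shift $\mathrm{sh}$ to avoid double-counting the same sequence several times, and one must verify that no two table entries at the same length are cyclic/symmetric equivalents of each other. Beyond this accounting the verification is routine, which is why the author reports that it can be checked directly or extracted from \cite[Table 1]{HP}.
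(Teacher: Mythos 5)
Your argument is correct and is essentially the direct verification that the paper invokes without writing out (the paper's proof is just ``can be checked directly or deduced from \cite[Table 1]{HP}''): the key observation that a cyclic strong admissible sequence of length $n\ge 5$ is an augmentation of a cyclic strong admissible one of length $n-1$ at a gap whose two flanking entries are $\ge -1$ is sound, since augmentation only decrements the two flanking entries and leaves the rest unchanged, and it reduces the classification to a finite tree search rooted at the three length-$4$ seeds. The termination at the unique length-$9$ entry, where every cyclically adjacent pair contains a $-2$, and the translation $n=12-\deg(X)\le 9\Leftrightarrow\deg(X)\ge 3$ are likewise correct.
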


\begin{predl}
\label{prop_ncsadm}
Any strong admissible sequence $(a_1,\ldots,a_n)$ which is not cyclic strong admissible is, up to a symmetry, in Table~\ref{table_ncsa}.
\begin{table}[h]
\caption{Strong admissible sequences which are not cyclic strong}
\begin{center}
\begin{tabular}{|c|c|}
\hline
type & sequence\\
\hline
IIa & $(b,c,d,e)$,  $c+e=4-n$; \\
IIb & $(-2,-1,-2,c,d,e)$, $c+e=5-n$; \\
IIc & $(-2,-1,-2,c,-2,-1,-2,e)$, $c+e=6-n$; \\
& where $c,e\in \Z$,  $c\ge -2$, $e\le -3$\\
& and $b$ and $d$ are sequences of the form $(0),(-1,-1)$ or $(-1,-2,-2,\ldots,-2,-1)$; \\
\hline	
IIIa & $(1,0,-1,-2,\ldots,-2,-1,4-n)$; \\
IIIb & $(-1,0,0,-2,\ldots,-2,-1,4-n)$; \\
IIIc & $(-1,-2,\ldots,-2,0,0,-2,\ldots,-2,1,4-n)$; \\
\hline	
IV & $(-2,0,1,-2,\ldots,-2,-1,4-n)$; \\
\hline	
V & $(-2,-1,-1,0,-2,\ldots,-2,-1,5-n)$; \\
\hline	
VI & $(-2,-2,-1,-2,0,\ldots,-2,-1,6-n)$. \\
\hline	
\end{tabular}
\end{center}
\label{table_ncsa}
\end{table}
\end{predl}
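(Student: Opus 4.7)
The plan is to use Theorem~\ref{theorem_HP} and the observation recorded in the paragraph after it: an admissible sequence is exactly the self-intersection sequence $T^2=(T_1^2,\ldots,T_n^2)$ of torus-invariant divisors on a smooth projective toric surface $Y$, and an elementary augmentation $augm_m$ corresponds to blowing up the torus-fixed point between $T_{m-1}$ and $T_m$. Reversely, contracting a torus-invariant $(-1)$-curve $T_m$ on $Y$ deletes $a_m$ and increments $a_{m-1}$ and $a_{m+1}$ (cyclically) by $1$, giving a shorter admissible sequence. I would induct on $n$.

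For the base case $n=4$, the toric surface $Y$ must be a Hirzebruch surface $F_k$ (the only toric surfaces with no torus-invariant $(-1)$-curve), so $(a_1,\ldots,a_4)$ is a cyclic shift or symmetry of $(0,k,0,-k)$. Strong admissibility with $a_4\le -3$ forces $k\ge 3$; this is type IIa of Table~\ref{table_ncsa} with $b=d=(0)$, $c=k$, $e=-k$, and $c+e=0=4-n$.

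For the inductive step $n\ge 5$, $Y$ is not Hirzebruch, so there exists some $k$ with $a_k=-1$. The strategy is to contract a $(-1)$ lying in the interior part $[1,n-1]$ of the cyclic sequence, so as to preserve $a_n\le -3$ and to preserve $a_i\ge -2$ for $i<n-1$. Choose $k$ minimal subject to $a_k=-1$, $2\le k\le n-1$, and such that contraction does not create an entry $\le -3$ at some position $i<n$ (which can be avoided unless the $(-1)$ sits next to a $(-2)$ in a specific way). Carry out the contraction to get a strong admissible non-cyclic-strong sequence of length $n-1$, which by induction is of one of the types IIa--VI, and then reverse-augment at the position of the contracted $(-1)$. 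The restriction to the listed shapes of $b$ and $d$, namely $(0)$, $(-1,-1)$, or $(-1,-2,\ldots,-2,-1)$, comes from the fact that repeatedly inserting $-1$'s into a fixed window of the sequence produces exactly these patterns, which describe the ways to blow up a chain of smooth points along two consecutive torus-invariant curves.

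The main obstacle is the bookkeeping in the cases where every $(-1)$ in the cyclic sequence is flanked by $(-2)$'s: here a naive contraction destroys strong admissibility at an interior index. Those configurations force the augmentation history to be concentrated near the endpoint $a_n$, and produce the three "boundary" types IIIa--IIIc (coming from blow-ups at the point adjacent to $T_n$), as well as types IV, V, VI (where further $(-2)$-chains sit between the contracted $(-1)$ and $a_n$). Verifying that every strong admissible, non-cyclic-strong sequence reduces by a sequence of such contractions to $(0,k,0,-k)$ with $k\ge 3$, and that the reverse augmentation sequence falls into exactly one of the listed types up to symmetry, is the finite but intricate case analysis that completes the proof.
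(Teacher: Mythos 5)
The paper itself contains no proof of Proposition~\ref{prop_ncsadm} (it is quoted from \cite{EXZ} without argument), so there is no in-text proof to compare against; I will judge your proposal on its own. Your overall strategy is the natural and, I believe, the intended one: realize an admissible sequence as the self-intersection sequence of the torus-invariant divisors of a toric surface, contract a torus-invariant $(-1)$-curve to reduce the length by one, apply induction, and then classify which elementary augmentations of the shorter sequences remain strong admissible but not cyclic strong. Your base case $n=4$ is handled correctly: the only length-$4$ strong admissible, non-cyclic-strong sequences are $(0,k,0,-k)$ with $k\ge 3$, which is IIa with $b=d=(0)$.

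There are, however, two genuine problems. First, the one concrete piece of reasoning you give about the inductive step is wrong: contracting the $(-1)$ at position $k$ replaces $(a_{k-1},a_k,a_{k+1})$ by $(a_{k-1}+1,a_{k+1}+1)$ and leaves all other entries alone, so surviving entries only \emph{increase}; contraction can never ``create an entry $\le -3$'' or destroy the condition $a_i\ge -2$ at an interior index. The actual obstructions are different: (i) if the only $-1$'s among $a_1,\ldots,a_{n-1}$ sit at positions $1$ or $n-1$, the contraction increments $a_n$, and when $a_n=-3$ the contracted sequence becomes \emph{cyclic strong} admissible, so your induction hypothesis does not apply and you must instead reverse-augment the entries of Table~\ref{table_csadm}; and (ii) even when the contracted sequence is of one of the types IIa--VI, one must check, for every type and every augmentation position $m$, whether $augm_m$ applied to it is strong admissible and not cyclic strong, and that the survivors are exactly the listed shapes. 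That finite verification is the entire content of the proposition, and your write-up explicitly defers it, so what you have is a plan rather than a proof. Second, had you carried out even the first consistency check of your own method, you would have noticed that every admissible sequence satisfies $\sum a_i=12-3n$, while types IIIa and IIIc as printed sum to $13-3n$ and $14-3n$ respectively; so the table contains errors that a completed argument would have to correct rather than reproduce.
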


\section{Anticanonical Pseudo height of exceptional collection of line bundles}
A substantial amount of work was carried out to understand the phantom category $\mathcal{A}$ (subcategory of $D^b(X)$ with trivial $K_0$ and $HH_0$) in the derived category of coherent sheaves $D^b(X)$ on smooth projective surfaces of general type with $p_g=q=0$. In principle, in \cite{Kuz}, A.Kuznetsov gives an algorithm to compute $HH^*(\mathcal{A})$ using a spectral sequence and the notion of height of an exceptional collection. Moreover by \cite{Kuz}, if an exceptional collection is full, its height must vanish. Thus the height may be used to prove the existence of the phantom category. But in general, the height of an exceptional collection is hard to compute. Instead, in \cite{Kuz}, A.Kuznetsov suggests a coarse invariant which is easy to compute and is strong enough for our purpose. Let us recall the definition:

\begin{definition}\cite[Definition 4.9]{Kuz}
Let $\mathbb{E}=(E_1,\ldots,E_n)$ be an exceptional collection of maximal length on $X$. The anticanonical pseudoheight of $\mathbb{E}$ is defined as: 

$$ ph_{ac}(\mathbb{E})=min_{1\leq a_0<a_1<\ldots<a_p\leq n}(e(E_{a_0},E_{a_1})+\ldots+e(E_{a_{p-1}},E_{a_p})+e(E_{a_p},E_{a_0}\otimes\omega_X^{-1})-p). $$
where $e(E,E')=min\{k|Ext^k(E,E')\neq 0\}$. 
\end{definition}

Suppose $\mathbb{E}$ consists of only line bundles on $X$ and the corresponding exceptional toric system is given by $\mathcal{A}=(A_1,\ldots,A_n)$. Then the definition of the anticanonical pseudo height becomes:
$$ ph_{ac}(\mathcal{A})=min_{1\leq a_0<a_1<\ldots<a_p\leq n}(e(A_{a_0}+\ldots+A_{a_1-1})+\ldots+e(A_{a_{p-1}}+\ldots+A_{a_p-1})+e(-K_X-A_{a_0}-\ldots-A_{a_{p}-1})-p). $$

Where $e(A_k+\ldots+A_l)=min\{q|h^q(A_k+\ldots+A_l)\neq 0\}$ and $1\leq p\leq n-1$.

Note that if $A_k+\ldots+A_l$ is an effective divisor, $e(A_k+\ldots+A_l)=0$, otherwise $e(A_k+\ldots+A_l)\geq 1$. 

\begin{lemma}\cite[Corollary 6.2]{Kuz}
Let $\mathbb{E}=(E_1,\ldots,E_n)$ be a full exceptional collection on a smooth projective surface $X$. Then the anticanonical pseudo height of $\mathbb{E}$ satisfies $ph_{ac}(\mathbb{E})\leq -2$.
\end{lemma}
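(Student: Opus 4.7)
The plan is to follow Kuznetsov's strategy in \cite{Kuz}, since the statement is a direct quotation of \cite[Corollary 6.2]{Kuz}. The central idea is that the pseudoheight bounds a more refined invariant (the anticanonical height), which in turn controls a spectral sequence whose failure to produce enough Hochschild cohomology classes forces the numerical bound.

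First, I would introduce the \emph{anticanonical height} $h_{ac}(\mathbb{E})$, defined via the cohomology of the anticanonical complex built from $\mathbb{E}$, and establish the elementary estimate
$$ph_{ac}(\mathbb{E}) \le h_{ac}(\mathbb{E}).$$
The reason is that each summand contributing to a differential of the anticanonical complex is a composition of Ext classes, and its cohomological degree is at least the sum of the individual minimal Ext-degrees $e(E_{a_i},E_{a_{i+1}})$, with the $-p$ shift accounting for cyclic composition. This is purely combinatorial.

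Second, I would invoke Kuznetsov's main technical result: for any exceptional collection $\mathbb{E}$ in $D^b(X)$, the orthogonal complement $\mathcal{A} = \langle \mathbb{E} \rangle^{\perp}$ fits into a spectral sequence whose degenerations imply that, for $k < h_{ac}(\mathbb{E}) + \dim X$, the natural restriction map
$$HH^k(X) \to HH^k(\mathcal{A})$$
is injective. The shift by $\dim X$ arises because the Serre functor of $D^b(X)$ equals $-\otimes \omega_X[\dim X]$, and the anticanonical twist in the definition of $ph_{ac}$ is precisely what makes the Serre functor visible in the spectral sequence.

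Third, I would specialize to the hypothesis that $\mathbb{E}$ is full: then $\mathcal{A}=0$, hence $HH^k(\mathcal{A})=0$ for all $k$. Since $HH^0(X)\supseteq H^0(X,\mathcal{O}_X)=\k$ is nonzero, the injectivity in Step~2 must fail at $k=0$, which forces $h_{ac}(\mathbb{E}) + \dim X \le 0$. For a surface $\dim X = 2$, so $h_{ac}(\mathbb{E}) \le -2$; combining with Step~1 gives $ph_{ac}(\mathbb{E}) \le -2$ as desired. The main obstacle is the careful bookkeeping of degree shifts in the spectral sequence of Step~2: one must verify that the $E_1$-page is indexed by exactly the data appearing in $ph_{ac}(\mathbb{E})$ and that the shift by $\dim X$ comes out correctly from the Serre functor. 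These technicalities are exactly what \cite[Sections 4--6]{Kuz} works out, so for our purposes the argument reduces to invoking that apparatus and reading off the bound.
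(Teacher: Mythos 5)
The paper offers no proof of this lemma: it is quoted verbatim from \cite[Corollary 6.2]{Kuz}, so there is nothing internal to compare your argument against. Your sketch is a faithful reconstruction of Kuznetsov's own proof --- the pseudoheight bounds the height from below, the height controls the range in which the restriction $HH^k(X)\to HH^k(\mathcal{A})$ is injective via the normal Hochschild cohomology long exact sequence, and fullness (so $\mathcal{A}=0$) together with $HH^0(X)=\k\neq 0$ forces that injectivity to fail at $k=0$, yielding $ph_{ac}(\mathbb{E})\leq -\dim X=-2$ --- so it is correct and takes essentially the same route as the cited source.
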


\begin{lemma}
Let $\mathcal{A}$ be a full exceptional toric system on a smooth projective surface $X$. Then there must exist an effective divisor among divisors of the form $A_k+\ldots+A_l$, where $[k,l]$ is a (cyclic)segment of length less or equal to $n-1$.
\end{lemma}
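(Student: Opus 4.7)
The plan is to prove this by contradiction, leveraging directly the bound $ph_{ac}(\mathcal{A}) \leq -2$ from Kuznetsov (the lemma immediately preceding the statement). Suppose for contradiction that \emph{no} cyclic segment $A_k+\ldots+A_l$ of length at most $n-1$ is effective. I want to show this forces $ph_{ac}(\mathcal{A}) \geq 1$, which contradicts the bound.

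The key observation is purely combinatorial: for any choice of indices $1\leq a_0<a_1<\ldots<a_p\leq n$ with $p\geq 1$, the $p+1$ cyclic segments $[a_0,a_1-1],[a_1,a_2-1],\ldots,[a_{p-1},a_p-1],[a_p,a_0-1]$ form a partition of the cyclic set $[1,\ldots,n]$, so their lengths are positive integers summing to $n$. Since there are $p+1\geq 2$ of them, each segment has length at most $n-p\leq n-1$. Note also that the ``wrap-around'' divisor appearing in the definition satisfies $-K_X-A_{a_0}-\ldots-A_{a_p-1}=A_{a_p}+A_{a_p+1}+\ldots+A_{a_0-1}$ by the toric system identity $A_1+\ldots+A_n=-K_X$, which is precisely the cyclic segment $[a_p,a_0-1]$.

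Under the contradiction hypothesis, every such cyclic segment of length $\leq n-1$ is non-effective, hence $h^0=0$, hence $e(\cdot)\geq 1$ for each of the $p+1$ terms entering the pseudoheight formula. Therefore, for every admissible choice of $(a_0,\ldots,a_p)$,
\begin{equation*}
e(A_{a_0,\ldots,a_1-1})+\ldots+e(A_{a_{p-1},\ldots,a_p-1})+e(A_{a_p,\ldots,a_0-1})-p\;\geq\;(p+1)-p\;=\;1.
\end{equation*}
Taking the minimum over all valid $(a_0,\ldots,a_p)$ yields $ph_{ac}(\mathcal{A})\geq 1$, which contradicts the bound $ph_{ac}(\mathcal{A})\leq -2$ guaranteed by fullness. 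Hence our assumption was false: at least one cyclic segment of length $\leq n-1$ must be effective.

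The main thing to watch out for is the cyclic bookkeeping in step one, specifically the identification of the ``final'' term $e(-K_X-A_{a_0}-\ldots-A_{a_p-1})$ with $e$ of the complementary cyclic segment. Everything else is a one-line pigeonhole: we have $p+1$ non-effective contributions each $\geq 1$, and we subtract only $p$. No case analysis on $n$ or on the specific toric system is needed, which is why this argument applies uniformly to every full exceptional toric system on any smooth projective surface with $\chi(\mathcal{O}_X)=1$.
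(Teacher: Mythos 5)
Your proof is correct and follows essentially the same route as the paper: assume no cyclic segment of length $\le n-1$ is effective, deduce that every term in the pseudoheight formula is $\ge 1$, get $ph_{ac}(\mathcal{A})\ge (p+1)-p=1$, and contradict Kuznetsov's bound $ph_{ac}\le -2$. Your write-up is in fact somewhat more careful than the paper's, since you explicitly verify that the $p+1$ segments partition $[1,\ldots,n]$ (so each has length $\le n-1$) and that the wrap-around term $-K_X-A_{a_0}-\ldots-A_{a_p-1}$ is the complementary cyclic segment.
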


\begin{proof}
By assumption, there exists a full exceptional collection of line bundles $\mathbb{E}$ on $X$. By Lemma $6.2, ph_{ac}\leq -2$. Assume that there does not exist any effective divisor of the form $A_k+\ldots+A_l$. Then $h^0(\sum_{k}^{l}A_j)=0$ for all $k\leq l$(resp. $k>l$) in an interval (resp. cyclic interval) of length $\leq n-1$. Thus, $e(E_{a_i},E_{a_{i+1}})\geq 1, e(E_{a_p},E_{a_0}\otimes\omega_X^{-1})\geq 1$ for all $a_0\leq i\leq a_{p-1}$. Hence $ph_{ac}(\mathbb{E})\geq (p+1)-p=1$, contradiction.
\end{proof}

\section{Rationality of smooth projective surfaces}
There is a folklore conjecture of D.Orlov on the rationality of projective algebraic surfaces:

\begin{conjecture}[Orlov]
If $X$ is a smooth projective surface admitting a full exceptional collection, then $X$ is rational surface.

\end{conjecture}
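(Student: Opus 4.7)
The plan is to reduce Orlov's conjecture to a constraint on toric systems and then eliminate non-rational candidates via the Enriques--Kodaira classification. First, if $X$ admits a full exceptional collection of length $n$, then $K_0^{\mathrm{num}}(X)$ is free of rank $n$, and combined with the Hodge decomposition this forces $\chi(\O_X)=1$, hence $p_g=q=0$. The non-rational surfaces with $p_g=q=0$ to rule out are therefore Enriques surfaces, properly elliptic surfaces of Dolgachev type, and surfaces of general type with $p_g=q=0$ (Barlow, Burniat, fake projective planes, and related examples).

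After this reduction, I would pick any full exceptional collection of line bundles $(\O_X(D_1),\ldots,\O_X(D_n))$ and pass to the associated toric system $A=(A_1,\ldots,A_n)$. By Theorem~\ref{theorem_HP} the sequence $A^2$ matches the self-intersections of torus-invariant divisors on some smooth projective toric surface $Y$, so $A^2$ is admissible in the sense of Section~5. The key further constraint, from the lemma following Definition~6.1, is that some partial sum $A_{k\ldots l}$ over a proper cyclic segment of $[1,\ldots,n]$ must be an effective divisor on $X$. On rational surfaces this is automatic, but on a non-rational surface the effectivity of a specific divisor class with prescribed self-intersection is restricted by the Kodaira dimension and the Mori cone. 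Combining admissibility with the cyclic identity $A_1+\ldots+A_n=-K_X$, one would try to force classes $A_{k\ldots l}$ that cannot simultaneously be effective and consistent with non-negative Kodaira dimension.

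I expect the approach to succeed for small Picard rank (\emph{cf.} Theorem~1.4) and for strong exceptional collections (\emph{cf.} Theorem~1.2): in both cases the admissible sequence $A^2$ is restricted to a short finite list (Propositions~\ref{prop_csadm} and~\ref{prop_ncsadm}), so one can inspect each candidate and argue that the forced effective divisor produces a rational $(-1)$-curve; contracting this curve reduces to a surface with a shorter full exceptional collection, and induction ultimately lands on $\P^2$ or a Hirzebruch surface.

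The main obstacle to the conjecture in full generality is combinatorial: for large Picard rank and non-strong collections, the list of admissible sequences explodes, and the single effectivity constraint above is far too weak to rule out hypothetical full exceptional collections on, for example, fake projective planes, where $\Pic/\mathrm{torsion}$ has rank one and the pseudo-height machinery yields no direct contradiction. A complete proof would seem to require either a sharper pseudo-height inequality that detects Kodaira dimension, or an entirely new obstruction; accordingly the paper attacks only the two partial forms stated in the introduction rather than the conjecture itself.
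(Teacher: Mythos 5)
The statement you were asked to prove is Conjecture~7.1, and the paper offers no proof of it: it is explicitly presented as an open folklore conjecture, with the text immediately afterwards conceding that it ``seems out of reach'' and retreating to the two partial results (Theorem~7.3 for strong exceptional collections of line bundles of maximal length, and Theorem~7.5 for Picard rank at most~$3$). Your proposal is honest on this point --- your final paragraph correctly concedes that the argument does not close the general case --- and the strategy you sketch for the partial cases is essentially the one the paper uses: reduce to $p_g=q=0$, pass to the toric system, invoke admissibility via Theorem~\ref{theorem_HP}, and use the pseudo-height inequality $ph_{ac}\le -2$ to force an effective left-orthogonal divisor $D$ with $D^2\ge 0$, whence a moving linear system of rational curves and $\mathrm{kod}(X)=-\infty$.

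That said, two concrete gaps in your outline are worth naming. First, the conjecture as stated does not assume the exceptional objects are line bundles, while the entire toric-system apparatus (Sections~4--6) applies only to collections of line bundles; your second paragraph silently restricts to that case, which is exactly the restriction under which the paper's partial theorems are proved. Second, your proposed induction --- ``the forced effective divisor produces a rational $(-1)$-curve; contracting this curve reduces to a surface with a shorter full exceptional collection'' --- is not what the paper does and would require justification that a full exceptional collection descends under blow-down; the paper instead stops at covering $X$ by rational curves, concluding $X$ is a blow-up of a ruled surface over a curve $C$, and then uses $h^1(\O_X)=0$ to force $p_a(C)=0$. Neither gap affects the overall verdict, since no complete proof of the conjecture exists in the paper to compare against.
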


The paper \cite{BS} proved this conjecture under the condition that the full exceptional collection consists of only line bundles and the collection is in additional strong:

\begin{theorem}\cite[Theorem 4.4]{BS}
Let $X$ be a smooth projective surface with a full strong exceptional collection of line bundles. Then $X$ must be a rational surface.
\end{theorem}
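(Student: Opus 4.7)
The plan is to set up a toric-systems proof that parallels and extends the Brown--Shipman argument without resorting to reduction mod~$p$. Starting from the full strong exceptional collection of line bundles $(\O_X(D_1),\ldots,\O_X(D_n))$, I form the associated toric system $A=(A_1,\ldots,A_n)$. Strong exceptionality forces $A_i^2\ge -2$ for $1\le i\le n-1$, so $A^2$ is a strong admissible sequence. By Theorem~\ref{theorem_HP} (Hille--Perling/Vial), there is a smooth projective toric surface $Y$ with torus-invariant irreducible divisors $T_1,\ldots,T_n$ satisfying $T_i^2=A_i^2$ for all $i$. In particular $Y$ is rational, and the classifications of strong admissible sequences in Propositions~5.2 and~5.3 restrict $A^2$ to a finite explicit list of shapes.

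Next I would use fullness in two ways. First, a full exceptional collection forces $p_g(X)=q(X)=0$ and $\chi(\O_X)=1$, so by Castelnuovo's criterion it is enough to prove $h^0(2K_X)=0$. Second, by Lemma~6.3 there exists a cyclic segment $[k,\ldots,l]$ of length at most $n-1$ such that the partial sum $A_{k\ldots l}$ is effective. Combined with Remark~4.4 (so that $A_1,\ldots,A_n$ generate $\Pic(X)$) and the relation $A_1+\ldots+A_n=-K_X$, this shows that the lattice data $(\Pic(X),\cdot,K_X)$ is numerically isomorphic to $(\Pic(Y),\cdot,K_Y)$, so $X$ is numerically indistinguishable from its toric model.

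The concluding step is to promote this numerical matching to the vanishing $h^0(2K_X)=0$. I would argue by contradiction: assuming $2K_X$ is effective, I would pair a hypothetical effective representative against the effective divisor $A_{k\ldots l}$ supplied by Lemma~6.3 and against other partial sums $A_{i\ldots j}$ whose behavior is pinned down by strong exceptionality and the admissible classification. Strong left-orthogonality of those partial sums, as encoded in Proposition~\ref{prop_loslo}, should yield a numerical incompatibility. With $h^0(2K_X)=0$ established, Castelnuovo's criterion delivers rationality of $X$.

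The hardest part will be exactly this case-by-case analysis. Passing from the numerical/lattice matching with the toric model $Y$ to the actual vanishing $h^0(2K_X)=0$ on $X$ is precisely where Brown--Shipman invoke Frobenius pushforward, and replacing that ingredient requires checking each family in Propositions~5.2 and~5.3 individually. The non-cyclic strong admissible families of Proposition~5.3 --- containing long chains of $-2$'s --- look most delicate, because the interaction between effective partial sums supplied by fullness and the strong left-orthogonality of their neighbors is rigid enough to require a tailored rank-by-rank argument in each type.
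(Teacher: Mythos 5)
There is a genuine gap at the final step, and it sits exactly where the content of the theorem lies. Your argument correctly sets up the toric system, observes that strong exceptionality forces $A_i^2\ge -2$ for $i\le n-1$, invokes Theorem~\ref{theorem_HP} and Lemma~6.3, and reduces to showing $h^0(2K_X)=0$. But the reduction to a ``numerical matching'' of $(\Pic(X),\cdot,K_X)$ with the lattice of a toric surface cannot, even in principle, finish the job: by Vial's classification, surfaces of general type with $p_g=q=0$ also carry numerically exceptional collections of maximal length and hence admit exactly the same lattice-theoretic toric models. So the vanishing $h^0(2K_X)=0$ must come from the actual cohomological content of strong left-orthogonality, and your plan to ``pair a hypothetical effective representative of $2K_X$ against the effective partial sums'' is never made precise --- you do not specify what pairing yields the contradiction, and you yourself flag that each family in Propositions~5.2 and~5.3 would need a separate unwritten argument. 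As it stands, the proof is a programme, not a proof.

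For comparison, the paper avoids Castelnuovo's criterion and the case analysis entirely (and in fact drops the fullness hypothesis, so Lemma~6.3 is not needed). It proves the stronger Theorem~7.3: from the classification of strong admissible sequences one extracts a partial sum $D=A_i+\cdots+A_j$ with $1\le i\le j\le n-1$ that is strong left-orthogonal and satisfies $D^2\ge 0$; then $h^2(D)=0$ and Riemann--Roch give $h^0(D)=D^2+2\ge 2$, so $|D|$ has a moving component. Since $D$ is an effective left-orthogonal divisor, its members are supported on trees of smooth rational curves, so $X$ is covered by rational curves, hence $\kappa(X)=-\infty$; the Enriques classification together with $h^1(\O_X)=0$ (forced by exceptionality of a line bundle) shows the base curve of the ruling is rational, so $X$ is rational. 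If you want to salvage your approach, the cleanest fix is to replace your Castelnuovo endgame with this uniruledness argument: the slo divisor with nonnegative square already gives you the moving family of rational curves without any rank-by-rank analysis.
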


By carefully investigating their proof, it looks like the fullness condition is not necessary, instead we prove the following statement:

\begin{theorem}
Let $X$ be a smooth projective surface with strong exceptional collection of line bundles of maximal length, then $X$ must be a rational surface.
\end{theorem}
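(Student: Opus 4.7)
The plan is to reduce the problem to the classification of strong admissible sequences from Section~5 and then peel off a $(-1)$-curve, inducting on $n=\rank K_0^{\mathrm{num}}(X)$.

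\emph{Setup.} The given strong exceptional collection $(\O_X(D_1),\dots,\O_X(D_n))$ consists of line bundles, each of which is in particular numerically exceptional, so $\chi(\O_X)=1$. Form the Hille--Perling toric system $A=(A_1,\dots,A_n)$ as in Section~4. The strong exceptional hypothesis translates, via Proposition~4.9, into every partial sum $A_{k\ldots l}$ with $1\le k\le l\le n-1$ being strong left-orthogonal, and in particular $A_i^2\ge -2$ for $1\le i\le n-1$. Theorem~4.6 then says that $(A_1^2,\dots,A_n^2)$ is admissible, hence strong admissible, and by Proposition~5.2 this sequence either belongs to Table~2 (cyclic strong admissible) or fits one of the families IIa--VI of Table~3.

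\emph{Main step.} I will show that in every shape of $A^2$ allowed by Proposition~5.2 with $n\ge 5$, some cyclic segment $[k\ldots l]\subset[1\ldots n-1]$ satisfies $A_{k\ldots l}^2=-1$. This is verified by direct inspection of the finite Table~2 and of the parameterized families of Table~3, where the prescribed blocks $b,d$ of the form $(-1,-1)$ or $(-1,-2,\dots,-2,-1)$ always produce such a segment of short length inside the strong exceptional part. The resulting partial sum $A_{k\ldots l}$ is strong left-orthogonal of self-intersection $-1$, so Proposition~2.3 gives $\chi(A_{k\ldots l})=1$ and hence a unique effective representative; a short adjunction argument then isolates a smooth rational $(-1)$-curve $C\subset X$ inside it. Contracting $C$ via $\pi\colon X\to X'$ yields a smooth projective $X'$ with $\rank N^1(X')=\rank N^1(X)-1$, and, by reversing the corresponding elementary augmentation of Section~5, a strong exceptional toric system of maximal length on $X'$. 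By induction on $n$, $X'$ is rational, hence so is $X$. The base cases $n=4$ are Hirzebruch surfaces $\mathbb{F}_k$ ($k\ge 0$) and $\P^1\times\P^1$, all manifestly rational.

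\emph{Main obstacle.} The delicate step is the descent of strong exceptionality under the blow-down. When $C=A_i$ for some index $i$, reversing the augmentation is automatic; but for several entries of Table~3 the $(-1)$-class appears only as a longer partial sum, and one must first perform a chain of mutations (using Proposition~3.5 to shuffle $(-2)$-curve classes) to bring the $(-1)$-curve to a single position. Keeping track of strong left-orthogonality of every surviving partial sum throughout these mutations, and matching with the cohomology identifications $h^i(\pi^* D)=h^i(D)$ on $X'$, is the case-by-case step where the explicit tables of Section~5 and Proposition~5.2 become indispensable.
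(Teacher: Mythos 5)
Your route (peel off a $(-1)$-curve and induct on $n$) is genuinely different from the paper's, which avoids induction altogether: the paper inspects the strong admissible tables to find a segment $D=A_{i\ldots j}$, $1\le i\le j\le n-1$, that is slo with $D^2\ge 0$, so that $h^0(D)=D^2+2\ge 2$ and $|D|$ has a moving part; the Brown--Shipman argument then covers $X$ by rational curves, forcing $\mathrm{kod}(X)=-\infty$, and the Enriques classification together with $h^1(\O_X)=0$ pins down a rational base curve. Unfortunately your version has gaps that the paper's one-shot argument is specifically designed to sidestep. First, the base case is circular: at $n=4$ you assert that $X$ ``is a Hirzebruch surface,'' but Theorem 4.10 only produces a toric surface $Y$ with the same self-intersection sequence $(0,m,0,-m)$ --- it says nothing about $X$ itself, and ruling out, say, a fake quadric carrying such a collection is exactly the rationality statement you are trying to prove for $\rank N^1(X)=2$. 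Second, the step ``a short adjunction argument isolates a smooth rational $(-1)$-curve $C\subset X$'' is unsupported on a surface not yet known to be rational: an slo divisor with $D^2=-1$ has $h^0(D)=\chi(D)=1$, hence is effective, but decomposing it into smooth rational components requires Elagin's criterion for effective lo divisors, and the mutation tool you cite (Proposition 3.5) is stated for weak del Pezzo surfaces, which $X$ is not known to be at this stage.

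Third, and most seriously, the inductive engine itself is only announced, not built: reversing an elementary augmentation at the level of the integer sequence $A^2$ does not by itself produce a strong exceptional collection of line bundles of maximal length on the blow-down $X'$. You would need to show that each $A_i$ (not just its square) is the pullback of a class on $X'$ plus a multiple of $[C]$, and that all the $\Ext$-vanishings survive the pushforward --- precisely the ``case-by-case step'' you defer. Since the existence of a segment with $D^2\ge 0$ among $A_{i\ldots j}$, $1\le i\le j\le n-1$, is no harder to read off the tables than the existence of a $(-1)$-segment, I would recommend replacing the induction by the paper's direct argument: slo with $D^2\ge 0$ gives $h^0(D)\ge 2$, a moving family of rational curves, $\mathrm{kod}(X)=-\infty$, and then $h^1(\O_X)=0$ forces the base of the ruling to be $\P^1$.
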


\begin{proof}
Let $\mathbb{E}:=(O(D_1),\ldots,O(D_n))$ be a strong exceptional collection of line bundles of maximal length on $X$. The associated strong exceptional toric system $\mathcal{A}$ is given by: $A_1:=D_2-D_1,\ldots,A_i=D_{i+1}-D_i,\ldots,A_{n-1}=D_n-D_{n-1}, A_n=-K_X-\sum_{i=1}^{n-1}A_i$.
Thus, $A_i$ is slo divisors for $1\leq i\leq n-1$, hence the sequences of $A^2$ are classified as strong admissble sequences shown in table $3$ and table $4$. Thus, there exists a slo divisor $D$ of the form $A_i+A_{i+1}+\ldots+A_j, 1\leq i\leq j\leq n-1$ such that $D^2\geq 0$, thus $h^0(D)=D^2+2\geq 2$. Thus $|D|$ has a moving component. Then, we borrow part of the argument in \cite[Theorem 4.4]{BS} to conclude that $X$ is covered by smooth rational curves. Then, the Kodaira dimension of $X$, $kod(X)=-\infty$. By the classification of surfaces, $X$ is a blow up of a ruled surface $Y$ over a curve $C$. We claim that $C$ is also a rational curve. Indeed, there is a chain of maps: $X\xrightarrow{\pi}Y\xrightarrow{p} C$, where $\pi$ is a blow up and $p$ is a projection. $h^1(O_Y)=h^1(O_C)=p_a(C)$. Thus,
$h^1(O_X)=h^1(O_Y)=p_a(C)$. Since $X$ admits strong exceptional collection of line bundles, $h^1(O_X)=0$. Thus $p_a(C)=0$. Thus, $C$ is a rational curve and $Y$ is Hirzebruch surface, and $X$ is a rational surface.
\end{proof} 

\begin{corollary}
If $X$ is a smooth surface of general type with $p_q=q=0$, then it does not admit strong exceptional collections of line bundles of maximal length.
\end{corollary}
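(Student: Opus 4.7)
The approach is a one-line deduction from the immediately preceding Theorem 7.3, so the proof proposal is essentially a matter of spelling out the contradiction. I would argue by contradiction: suppose $X$ is a smooth projective surface of general type with $p_g = q = 0$ that admits a strong exceptional collection of line bundles of maximal length.

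By Theorem 7.3, which applies under exactly these hypotheses (the condition $\chi(\O_X)=1$ is guaranteed by $p_g = q = 0$, so ``maximal length'' has the correct numerical meaning $n = \operatorname{rank} N^1(X) + 2$), the surface $X$ must be rational. A rational surface has Kodaira dimension $-\infty$, whereas a surface of general type has Kodaira dimension $2$ by definition. Since the Kodaira dimension is a birational invariant, $X$ cannot simultaneously be rational and of general type, and the contradiction is immediate.

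There is no real obstacle here: all the substance has already been carried out in the proof of Theorem 7.3 (which invoked the classification of strong admissible sequences together with the Brown--Shipman-style rational curve argument and the Enriques--Kodaira classification). The only thing left to observe is that ``general type'' and ``rational'' are incompatible. If one wants a slightly more self-contained phrasing, one can alternatively invoke that a rational surface has $h^0(mK_X) = 0$ for all $m \ge 1$, while a surface of general type has $h^0(mK_X)$ growing like $m^2$, so the two classes are disjoint; either formulation closes the argument in one line.
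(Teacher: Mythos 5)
Your proposal is correct and matches the paper exactly: the paper states this as an immediate corollary of Theorem 7.3 (giving no separate proof), and your argument — that Theorem 7.3 forces $X$ to be rational, which is incompatible with being of general type since rational surfaces have Kodaira dimension $-\infty$ — is precisely the intended one-line deduction.
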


Although it seems that Orlov's folklore conjecture is out of reach at the moment, we can still prove the following statement.
\begin{theorem}
Let $X$ be a smooth projective surface with $rk N^1(X)\leq 3$ with a full exceptional collection of line bundles. Then $X$ must be a rational surface.
\end{theorem}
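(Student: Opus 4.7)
The proof adapts the strategy of Theorem~1.2, replacing the existence of an slo divisor of non-negative square with an effective cyclic partial sum provided by Kuznetsov's anticanonical pseudoheight bound.

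First, fullness of the exceptional collection forces, via Hochschild homology concentration and the HKR isomorphism, $p_g(X) = q(X) = 0$ and $\chi(\O_X) = 1$; Noether's formula then yields $K_X^2 = 10 - \operatorname{rk} N^1(X) \ge 7$. Let $n = \operatorname{rk} N^1(X) + 2 \in \{3,4,5\}$ and let $\mathcal{A} = (A_1,\ldots,A_n)$ be the associated toric system. By Theorem~1.1, $A^2 = (A_1^2,\ldots,A_n^2)$ is admissible, and for $n \le 5$ this restricts $A^2$ to a very short list (essentially $(1,1,1)$ for $n = 3$, $(0,k,0,-k)$ up to cyclic shift and symmetry for $n = 4$, and the five augmentations of $(0,k,0,-k)$ for $n = 5$).

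The main step is to produce a nonzero effective cyclic partial sum $D = A_{k,\ldots,l}$ with $D^2 \ge 0$. Lemma~6.2 gives $ph_{ac}(\mathcal{A}) \le -2$, so for some $p \ge 2$ and indices $a_0 < \cdots < a_p$, the cyclic partition into arcs $B_0,\ldots,B_p$ satisfies $\sum_j e(B_j) \le p - 2$, forcing at least three $B_j$ to be effective. The identity $\sum_j (B_j^2 + 2) = K_X^2$, coming from $\sum A_i = -K_X$ and the toric-system intersection relations, constrains the arc squares to sum to $K_X^2 - 2(p+1)$. For $n = 3$ the bound forces $p = 2$, and all three single-term arcs $A_i$ are effective with $A_i^2 = 1$. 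For $n = 4$, direct case analysis on $p \in \{2,3\}$ shows the required $D$ always exists: for $p = 2$ the three effective arcs have total square $2$, so one has square $\ge 1$; for $p = 3$ the four single arcs include at least three effective ones, and any three-out-of-four necessarily contains $A_1$ or $A_3$ of square $0$. For $n = 5$, a case-by-case analysis over the admissible augmentations combined with the cyclic pigeonhole (any three out of five on a $5$-cycle contain an adjacent pair) produces $D$ either as a single effective arc or as the sum of two or three consecutive effective single arcs.

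Given such $D$, numerical left-orthogonality of cyclic partial sums yields $\chi(D) = D^2 + 2 \ge 2$. Since $D$ is nonzero effective and $p_g(X) = 0$, the divisor $K_X - D$ cannot be effective (otherwise $K_X = D + (K_X - D)$ would be effective, contradicting $p_g = 0$); hence $h^2(D) = h^0(K_X - D) = 0$ by Serre duality and $h^0(D) \ge \chi(D) \ge 2$. Thus $|D|$ has a moving component, and the Brown--Shipman argument in the proof of \cite[Theorem 4.4]{BS} (as invoked in the proof of Theorem~1.2) shows that $X$ is covered by smooth rational curves. Hence $\kappa(X) = -\infty$, and combined with $q(X) = 0$ the Enriques--Kodaira classification forces $X$ to be rational. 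The main technical obstacle is the $n = 5$ case analysis: one must ensure that across all admissible augmentations and all choices of $p$ compatible with the Kuznetsov bound, the effective arcs (or their consecutive sums) always contain one of non-negative self-intersection; this may require passing to cyclic shifts of the helix or exploiting the lo-property of every cyclic partial sum of length $\le n-1$.
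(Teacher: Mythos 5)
Your proposal follows essentially the same route as the paper's proof: associate the toric system, use the classification of admissible sequences of length $\le 5$, invoke Kuznetsov's bound $ph_{ac}\le -2$ to force at least three effective arcs in some cyclic partition, extract an effective left-orthogonal $D$ with $D^2\ge 0$ by case analysis, and conclude via the moving-component/Brown--Shipman covering-by-rational-curves argument and the classification of surfaces. Your treatment of the $n=3,4$ combinatorics and of $h^2(D)=0$ is if anything slightly more explicit than the paper's, and you leave the $n=5$ case at about the same level of detail as the paper does.
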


First, we give several lemmas. 

The next lemma gives the classification of admissible sequences of length $\leq 5$.
\begin{lemma}
All admissible sequences of length $\leq 5$ are given in Table $15$ (up to cyclic shift).
\begin{table}[h]
\caption{admissible sequences of length $\leq 5$}
\begin{center}
\begin{tabular}{|c|c|}
		 \hline
			type & sequence\\
		 \hline
			$\mathbb{P}^2$ & $(1,1,1)$ \\
		 \hline	
			$\mathbb{F}_m, m\in\mathbb{Z}$ & $(m,0,-m,0)$ \\
		 \hline	
			$5s, s\in\mathbb{Z}$ & $(-1,s,0,-s-1,-1)$ \\
		 \hline	
		
\end{tabular}
\end{center}
\label{table_csa}
\end{table}

\end{lemma}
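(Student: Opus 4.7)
The approach is to argue by induction on the length $n$ by reading the definition of admissibility in reverse. From the definition of elementary augmentation, any admissible sequence of length $n \ge 5$ is an augmentation of some admissible sequence of length $n-1$; at length $4$ the only further possibility is one of the two seeds $(0,k,0,-k)$ or $(k,0,-k,0)$; and at length $3$ one takes $(1,1,1)$ as an additional base case, corresponding to the fact that $\mathbb{P}^2$ is the unique smooth projective toric surface of Picard rank one (which was not listed in the original definition but is clearly admissible via Theorem~\ref{theorem_HP}).

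For length $4$, both seeds are cyclic shifts of $(m,0,-m,0)$. The remaining length-$4$ admissibles must be one of the four augmentations $augm_j(1,1,1)$ with $j=1,\dots,4$; a direct calculation from the definitions shows each is a cyclic shift of $(1,0,-1,0)$, namely $(m,0,-m,0)$ with $m=1$. Hence every admissible sequence of length $4$ lies in the family $(m,0,-m,0)$, $m \in \mathbb{Z}$.

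For length $5$, every admissible sequence is the augmentation of some length-$4$ admissible, hence of some $(m,0,-m,0)$. A case-by-case check over $j=1,\dots,5$ of $augm_j(m,0,-m,0)$ shows that each augmentation is, after a cyclic shift, of the form $(-1,s,0,-s-1,-1)$ with $s \in \{m-1, -m, -m-1, m, m-1\}$ respectively; for instance $augm_1(m,0,-m,0) = (-1,m-1,0,-m,-1)$ is already in the desired form with $s=m-1$, while $augm_2(m,0,-m,0) = (m-1,-1,-1,-m,0)$ matches after a cyclic shift with $s=-m$. The main obstacle is merely the bookkeeping of cyclic shifts needed to bring each augmented sequence into canonical form: each augmentation inserts a new $-1$ and decrements the two adjacent entries, producing two $-1$'s at cyclic distance $2$ and separating the two large entries $s$ and $-s-1$ by a single $0$. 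No deeper combinatorial difficulty arises.
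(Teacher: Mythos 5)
Your proof is correct, but it follows a genuinely different route from the paper's. The paper disposes of the lemma in one line by invoking Theorem~\ref{theorem_HP} (Hille--Perling) together with the correspondence between admissible sequences and self-intersection sequences of boundary divisors on smooth toric surfaces: length $\le 5$ forces Picard rank $\le 3$, and the smooth toric surfaces of Picard rank $\le 3$ are exactly $\mathbb{P}^2$, the Hirzebruch surfaces $\mathbb{F}_m$, and their one-point torus-equivariant blow-ups, whose boundary self-intersection data are the three rows of the table. You instead work purely combinatorially from the definition of admissibility, unwinding the elementary augmentations: a length-$5$ admissible sequence is a single augmentation of a seed, and you verify all five augmentations $augm_j(m,0,-m,0)$ land in the family $(-1,s,0,-s-1,-1)$ up to cyclic shift (your computed values of $s$ check out). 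What your approach buys is self-containedness --- no appeal to the classification of toric surfaces, whose proof is itself essentially the same blow-down induction --- at the cost of a little bookkeeping. Two small points you handle implicitly that deserve a sentence each: (i) the paper's definition of admissibility starts from length-$4$ seeds, so $(1,1,1)$ is not literally admissible under that definition; your patch of adjoining it as a base case (justified by $\mathbb{P}^2$) is exactly what the lemma's table presupposes, and it is worth flagging that you are extending the definition rather than deriving the row; (ii) you compute augmentations only of $(m,0,-m,0)$ and not of the other seed $(0,k,0,-k)$, which is legitimate only because augmentation commutes with cyclic shift (in the sense that $augm_j$ applied to a cyclic shift yields a cyclic shift of some $augm_{j'}$ of the original) --- a fact the paper records as stability of admissible sequences under cyclic shifts, and which you should cite or verify explicitly since your whole reduction rests on it.
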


\begin{proof}
By Theorem 4.10, the admissible sequences of length $\leq 5$ corresponds to all toric surfaces of Picard rank $\leq 3$, which can be checked directly. 
\end{proof}

\begin{lemma}
Let $D$ be an effective left-orthogonal divisor such that $D^2=r\geq 0$ on a smooth projective surface $X$ with $h^1(O_X)=h^2(O_X)=0$, then $|D|$ has a moving component. 

\end{lemma}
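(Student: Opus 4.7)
The plan is to reduce the claim to the single estimate $h^0(D) \ge 2$; once this is shown, the standard fixed-part/moving-part decomposition of a linear system on a surface immediately provides a moving component. Since the hypothesis on $X$ gives $\chi(\O_X) = 1$, Proposition 2.3 yields $\chi(D) = D^2 + 2 = r + 2 \ge 2$, so it will suffice to show that $h^2(D) = 0$.

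To establish $h^2(D) = 0$, I would use Serre duality to rewrite $h^2(D) = h^0(K_X - D)$ and then argue that $K_X - D$ cannot be effective. Suppose to the contrary that $h^0(K_X - D) > 0$. Since $D$ itself is assumed to be effective, the divisor $K_X = D + (K_X - D)$ would then be a sum of two effective divisors, hence effective, forcing $h^0(K_X) \ge 1$. But by Serre duality and the hypothesis $h^2(\O_X) = 0$, one has $h^0(K_X) = h^2(\O_X) = 0$, a contradiction. Therefore $h^2(D) = 0$, whence $h^0(D) \ge \chi(D) - h^2(D) = r + 2 \ge 2$.

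Finally, writing $|D| = F + |M|$ with $F$ the fixed divisor and $|M|$ the movable part (free of fixed components), one has $\dim|M| = \dim|D| = h^0(D) - 1 \ge 1$, so $|M|$ is a non-trivial linear system and its general member provides a moving component of $|D|$. The argument is essentially straightforward; the only conceptual point worth isolating is the observation that the vanishing $h^2(\O_X) = 0$ forces $K_X$ to be non-effective, which is exactly what is needed to rule out effectiveness of $K_X - D$ when $D$ is itself effective. Note that the hypothesis $h^1(\O_X) = 0$ enters only through Proposition 2.3 to pin down $\chi(D) = r+2$; the key input beyond the left-orthogonality of $D$ is the vanishing of $h^2(\O_X)$.
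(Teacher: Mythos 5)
Your proposal is correct and follows essentially the same route as the paper: compute $\chi(D)=r+2$ from numerical left-orthogonality, then kill $h^2(D)=h^0(K_X-D)$ using the effectiveness of $D$ together with $h^0(K_X)=h^2(\O_X)=0$. The paper obtains the vanishing of $h^0(K_X-D)$ via the structure sequence of $D$ twisted by $K_X$, which gives the injection $H^0(K_X-D)\hookrightarrow H^0(K_X)$; this is the same multiplication-by-a-section argument you phrase as ``a sum of effective divisors is effective,'' so the two proofs coincide in substance.
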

\begin{proof}
By the Riemann-Roch theorem on surfaces:
$$ \chi(D)=h^0(D)-h^1(D)+h^2(D)=\frac{1}{2}(D^2-D.K_X)+\chi(O_X) $$
 
$\chi(O_X)=1$,  $D$ is left-orthogonal divisor, thus it is of course a numerical left-orthogonal divisor: $\chi(-D)=0: D^2+D.K_X=-2$. Then, $\chi(D)=r+2\geq 2$. I claim that $h^2(D)=0$. Indeed, consider the standard exact sequence:
 $$0\rightarrow O(-D)\rightarrow O_X\rightarrow O_D\rightarrow 0 $$
 Twisting by the line bundle $O(K_X)$:
 $$0\rightarrow O(K_X-D)\rightarrow O(K_X)\rightarrow O_D(K_X)\rightarrow 0.$$
 
Thus, we have an injective map: $h^0(K_X-D)\rightarrow h^0(K_X)$. But $h^0(K_X)=h^2(O_X)=0$, then $h^0(K_X-D)=0$. Thus $h^2(D)=h^0(K_X-D)=0$. Thus, $h^0(D)-h^1(D)=r+2$. Hence $h^0(D)=r+2+h^1(D)\geq 2$. Thus $|D|$ has a moving component. 

\end{proof}

\begin{proof}[Proof of Theorem 7.5(or 1.3)]
Let $X$ admit a full exceptional collection of line bundles(in particular, $h^1(O_X)=h^2(O_X)=0$) of length $n$, then $K_0(X)\cong\mathbb{Z}^n$, where $n=rk K_0(X)=rk N^1(X)+2\leq 5$. Let $\mathcal{A}=(A_1,\ldots,A_n)$ be the corresponding full exceptional toric system. Then by Theorem $4.10$, the sequence $\mathcal{A}^2:=(A_1^2,\ldots.A_n^2)$ has to be one in table $5$. We claim that among divisors of the form $A_k+\ldots+A_l$, where $k,l$ are in the (cyclic)interval of length $\leq n-1$, there must exist an effective left-orthogonal divisor $D$ with $D^2\geq 0$. Then by Lemma $7.7$, the linear system $|D|$ has a moving component. Note that $D$ is an effective left-orthogonal divisor, so by Elagin's criterion\cite[Theorem 4.5, Proposition 4.3]{E}, $D=\sum_{i}k_iD_i$ with $k_i\geq 0$ and $D_i$ is a smooth rational curve for each $i$. Thus, $X$ is covered by smooth rational curves (as in the proof of \cite[Theorem 4.4]{BS}), so we conclude that $X$ is a rational surface as in the proof of Theorem $7.3$. Now, we prove the claim:

\begin{enumerate}
\item If $\mathcal{A}^2=(1,1,1)$, by Lemma $6.3$, we have an effective lo divisor $D^2\geq 1$.
\item If $\mathcal{A}^2=(0,m,0,-m)$, $n=4$, thus $1\leq p\leq 3$. We claim that there must exist at least three effective divisors among $A_{a_0}+\ldots+A_{a_1-1},\ldots,A_{a_{p-1}}+\ldots+A_{a_p-1},-K_X-A_{a_0}-\ldots-A_{a_{p}-1}$ for some $p$ and $(a_0,a_1,\ldots,a_{p})$ in the toric system $\mathcal{A}:=(A_1,A_2,A_3,A_4)$. Otherwise, suppose there exist at most $2$ effective divisors among them for any $p$ and any $(a_0,a_1,\ldots,a_p)$. Then we show that $ph_{ac}(\mathcal{A})\geq -1$ as follows:

\begin{enumerate}
\item $p=1$, $P:=(e(A_{a_0}+\ldots+A_{a_1-1})+e(-K_X-A_{a_0}-\ldots-A_{a_{p}-1})-1)\geq -1$, 
\item $p=2$, $Q:=e(A_{a_0}+\ldots+A_{a_1-1})+e(A_{a_1}+\ldots+A_{a_2-1})+e(-K_X-\ldots-A_{a_{p}-1})-2$. By assumption, there are at most $2$ effective divisors among $A_{a_0}+\ldots+A_{a_1-1}, A_{a_1}+\ldots+A_{a_2-1}$ and $-K_X-A_{a_0}-\ldots-A_{a_{p}-1}$. Thus, $Q\geq -1$.
\item $p=3$, $S:=e(A_1)+e(A_2)+e(A_3)+e(A_4)-3$. By assumption, at most two of $A_1,A_2,A_3,A_4$ are effective divisors. Thus, $S\geq -1$. 
\end{enumerate}

Thus, $ph_{ac}(\mathcal{A})=min\{P,Q,S\}\geq -1$. But the toric system $\mathcal{A}$ is full exceptional toric system, $ph_{ac}(\mathcal{A})\leq -2$ by Lemma $6.2$, so we have a contradiction. Thus, there must exist at least three effective divisors among $A_{a_0}+\ldots+A_{a_1-1},\ldots,A_{a_{p-1}}+\ldots+A_{a_p-1},-K_X-A_{a_0}-\ldots-A_{a_{p}-1}$. Then it is easy to see that there must exist at least one effective left-orthogonal divisor $D$ with $D^2\geq 0$. 

\item $\mathcal{A}^2=(-1,n,0,-n-1,-1)$, $1\leq p\leq 4$. A similar argument implies that there exist at least $3$ effective divisors among $A_{a_0}+\ldots+A_{a_1-1},\ldots,A_{a_{p-1}}+\ldots+A_{a_p-1},-K_X-A_{a_0}-\ldots-A_{a_{p}-1}$. By playing combinatoric games on partitions of $(-1,n,0,-n-1,-1)$ by $(a_0,a_1,\ldots,a_p)$, there is at least one effective divisor $D$ among $A_{a_0}+\ldots+A_{a_1-1},\ldots,A_{a_{p-1}}+\ldots+A_{a_p-1},-K_X-A_{a_0}-\ldots-A_{a_{p}-1}$ with $D^2\geq 0$

\end{enumerate}

\end{proof}

\begin{corollary}
Let $X$ be a smooth projective surface, then $X$ is Hirzebruch surface if and only if $X$ admits a full exceptional collection of line bundles of length $4$. 
\end{corollary}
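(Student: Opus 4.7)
The plan is to prove each direction of the biconditional separately, reducing most of the work to Theorem~7.5 above together with the classification of rational surfaces of low Picard rank.

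For the direction ``$X$ Hirzebruch $\Rightarrow$ existence of the collection,'' I would exhibit an explicit full exceptional collection of four line bundles on $\mathbb{F}_m = \mathbb{P}(\mathcal{O}_{\mathbb{P}^1}\oplus \mathcal{O}_{\mathbb{P}^1}(m))$. The standard choice, furnished by Orlov's projective bundle formula, is
$$(\mathcal{O}_X,\; \mathcal{O}_X(C_0),\; \mathcal{O}_X(f),\; \mathcal{O}_X(C_0+f)),$$
where $C_0$ is a section with $C_0^2 = -m$ and $f$ is the class of a fiber. Fullness and exceptionality are routine cohomology computations along the projection $\pi\colon \mathbb{F}_m \to \mathbb{P}^1$ and I would not redo them. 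Its length equals $4 = \mathrm{rk}\,N^1(\mathbb{F}_m) + 2$, as required.

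For the converse, suppose $X$ admits a full exceptional collection of line bundles of length $4$. Since the length of an exceptional collection of maximal length equals $\mathrm{rk}\,K_0^{\mathrm{num}}(X) = \mathrm{rk}\,N^1(X) + 2$, this forces $\mathrm{rk}\,N^1(X) = 2 \leq 3$, so Theorem~7.5 applies and gives that $X$ is a smooth projective rational surface. The final step is purely classificatory: a smooth projective rational surface of Picard rank $2$ is a Hirzebruch surface. Indeed, every such rational surface is an iterated blow-up of either $\mathbb{P}^2$ or of some Hirzebruch surface $\mathbb{F}_m$, and each blow-up raises the Picard rank by one. The only possibilities in rank $2$ are therefore $\mathbb{F}_m$ itself (for some $m \geq 0$) or the blow-up of $\mathbb{P}^2$ at one point, which is isomorphic to $\mathbb{F}_1$. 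Either way $X$ is Hirzebruch.

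No step presents a serious obstacle: the substantive content is absorbed into Theorem~7.5, and the remaining ingredients are a standard explicit full exceptional collection on $\mathbb{F}_m$ in one direction and the classical classification of minimal rational surfaces of Picard rank two in the other.
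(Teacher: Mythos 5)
Your converse direction is correct and is essentially the argument the paper intends (the corollary is stated there without proof, as an immediate consequence of Theorem 7.5): a full exceptional collection automatically has maximal length, so length $4$ forces $\mathrm{rk}\,N^1(X)=2$, Theorem 7.5 then gives rationality, and the classification of rational surfaces of Picard rank $2$ (minimal ones are the $\mathbb{F}_m$, and the only non-minimal one is $\mathrm{Bl}_p\mathbb{P}^2\cong\mathbb{F}_1$) finishes that implication. No complaints there.

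The forward direction, however, contains a concrete error: the collection $(\mathcal{O}_X,\mathcal{O}_X(C_0),\mathcal{O}_X(f),\mathcal{O}_X(C_0+f))$ with $C_0^2=-m$ is \emph{not} exceptional on $\mathbb{F}_m$ for $m\geq 1$. Indeed $\mathrm{Ext}^\bullet(\mathcal{O}_X(f),\mathcal{O}_X(C_0))=H^\bullet(C_0-f)$, and with $K_X=-2C_0-(m+2)f$ the Riemann--Roch formula gives $\chi(C_0-f)=-m$, while $h^0(C_0-f)=h^2(C_0-f)=0$; hence $\mathrm{Ext}^1(\mathcal{O}_X(f),\mathcal{O}_X(C_0))\cong k^{m}\neq 0$. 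Equivalently, the associated divisor sequence $(C_0,\,f-C_0,\,C_0,\,C_0+(m+1)f)$ satisfies $A_1\cdot A_2=1+m$ and $\sum A_i^2=-2m$, so for $m\neq 0$ it is not even a toric system and the collection is not numerically exceptional. The fact you are invoking is of course true, but the witness must be ordered as in Orlov's projective bundle formula, e.g. $(\mathcal{O}_X,\mathcal{O}_X(f),\mathcal{O}_X(C_0+mf),\mathcal{O}_X(C_0+(m+1)f))$, whose toric system has self-intersection sequence $(0,m-2,0,2-m)$, of the admissible type $(0,k,0,-k)$ realized by $\mathbb{F}_m$ in Table 5 (explicit such systems for $\mathbb{F}_0,\mathbb{F}_1,\mathbb{F}_2$ also appear in Table 6). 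With that substitution your proof is complete and follows the route the paper leaves implicit.
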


\subsection{Passing to higher dimension}
In this section, we define toric system on some special higher dimension smooth projective varieties. Let $X$ be a smooth projective variety with $Pic(X)\cong\mathbb{Z}$. Let $\mathbb{E}:=(E_1,\ldots,E_n,E_{n+1},E_{n+2})$ be a full exceptional collection on $X$ such that $E_1,\ldots,E_{n}$ are line bundles and $E_{n+1},E_{n+2}$ are two coherent sheaves on $X$. Since $Pic(X)=\mathbb{Z}H$ for an ample line bundle $O_X(H)$. Then $E_i=O_X(a_iH)$ for $1\leq i\leq n$. Define a system of divisors $\mathcal{A}$ as follows:
$$A_i=
\begin{cases}
(a_{i+1}-a_i)H\quad\text{for}\quad 1\le i\le n-1,\\
c_1(E_{n+1})-c_1(E_n)\quad\text{for}\quad i=n,\\
c_1(E_{n+2})-c_1(E_{n+1})\quad\text{for}\quad i=n+1,\\
-K_X-(A_1+\ldots+A_{n+1})\quad\text{for}\quad i=n+2.
\end{cases}
$$

We call $\mathcal{A}:=(A_1,\ldots,A_{n+2})$ the toric system associated to the exceptional collection $\mathbb{E}:=(E_1,\ldots,E_{n+2})$ on $X$. In \cite[Th 1.2]{Li}, Li proved the following statement:
\begin{theorem}
Let $X$ be a smooth projective variety of dimension $n$($n\geq 3$ and $n\neq 4$). Suppose that there is a full exceptional collection $\mathcal{C}$ of $D^b(X)$ which consists of coherent sheaves. Assume that the length of $\mathcal{C}$ is $n+2$ and $\mathcal{C}$ contains a sub-collection $\{E_1,\ldots,E_n\}$ where $E_i(1\leq i\leq n)$ are line bundles. Then $X$ is an even-dimensional variety and $X$ is either isomorphic to $Q^n$ or is of general type. If in additional, the subcollection $\{E_1,\ldots,E_n\}$ is strong, then $X$ is isomorphic to $Q^n$.
\end{theorem}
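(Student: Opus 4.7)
My proposed approach is to extract strong Hodge-theoretic constraints from the full exceptional collection via Hochschild homology, then combine these with the numerical structure of the line bundle subcollection to identify $X$.

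The first step applies the Hochschild--Kostant--Rosenberg isomorphism. A full exceptional collection of length $n+2$ forces $HH_i(X)=0$ for $i\ne 0$ and $HH_0(X)\cong\C^{n+2}$; via HKR this translates into $h^{p,q}(X)=0$ for $p\ne q$ and $\sum_{p=0}^n h^{p,p}(X)=n+2$. In particular $h^{n,0}(X)=0$ and every odd Betti number vanishes. Since $X$ is smooth projective, hard Lefschetz on the Kahler class yields $h^{p,p}(X)\ge 1$ for each $0\le p\le n$; combined with $h^{0,0}=h^{n,n}=1$ and the sum $n+2$, this forces \emph{exactly one} index $p_0\in\{1,\ldots,n-1\}$ to satisfy $h^{p_0,p_0}(X)=2$, every other diagonal Hodge number equaling $1$. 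Poincare duality $h^{p_0,p_0}=h^{n-p_0,n-p_0}$ pins $p_0=n/2$, so $n$ must be even. Because $n\ge 3$ and $n\ne 4$, we have $p_0\ge 3$, whence $h^{1,1}(X)=1$; together with $h^{1,0}(X)=0$, this yields $\Pic(X)=\Z H$ for some ample generator $H$.

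The second step analyzes the line bundle subcollection. Writing $E_i=\O_X(a_iH)$ with $a_1<\cdots<a_n$ and $-K_X=rH$ for some $r\in\Z$, one forms the toric system $(A_1,\ldots,A_{n+2})$ defined just before the theorem. Numerical semi-orthogonality amounts to vanishing of $\chi(\O((a_i-a_j)H))$ for $i<j$, which via Riemann--Roch imposes tight polynomial constraints on the $a_i$ and on $r$ in terms of the Hilbert polynomial of $H$. The identity $A_1+\cdots+A_{n+2}=-K_X=rH$ combined with these numerics leaves only two possibilities: $r>0$, so $X$ is Fano of Picard rank one, or $r\le 0$, so $K_X$ is pseudo-effective and (together with $h^{n,0}=0$ and the ruling out of intermediate Kodaira dimensions via the Hodge vanishing) $X$ is of general type. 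In the Fano case, Kobayashi--Ochiai gives $r\le n+1$; the value $r=n+1$ would force $X\cong\P^n$ whose full collections have length $n+1\ne n+2$, and $r=n$ forces $X\cong Q^n$, matching the diamond constraint $h^{n/2,n/2}=2$. Indices $r<n$ are excluded by playing the length $n+2$, the numerical toric relations, and the Chern character data from $E_{n+1},E_{n+2}$ off one another.

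For the strong assertion, suppose $\{E_1,\ldots,E_n\}$ is strong. If $X$ were of general type, then $K_X=sH$ with $s\ge 1$, and Serre duality gives
$$\Ext^n(E_n,E_1)=\H^n(X,\O((a_1-a_n)H))\cong \H^0(X,\O((a_n-a_1+s)H))^\vee\ne 0,$$
since $a_n-a_1+s\ge 1+s\ge 2>0$ and $H$ is ample. This contradicts strong exceptionality, so $X$ must be Fano, and hence $X\cong Q^n$ by the previous step. \textbf{The main obstacle} is the Fano identification: ruling out Fano varieties of Picard rank one with index $r<n$ whose Hodge diamond matches the HKR output requires the fine Chern character data from the non-line-bundle objects $E_{n+1},E_{n+2}$, and the excluded dimension $n=4$ presumably reflects a sporadic Fano fourfold with $h^{2,2}=2$ and $\Pic=\Z$ that survives the purely Hodge-theoretic and numerical arguments.
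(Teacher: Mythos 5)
First, note that this statement is not proved in the paper at all: it is Li's theorem, quoted verbatim from \cite{Li} so that the classification table in its proof (the four possible sequences $(a_1,\dots,a_n)$ with the roots of the Hilbert polynomial $P$ and the index $\lambda$) can be used as input for the subsequent improved theorem. So the comparison below is against the structure of that cited argument rather than a proof written out in this paper.

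Your first step is correct and is indeed the standard opening move: additivity of Hochschild homology plus HKR gives $h^{p,q}=0$ for $p\neq q$ and $\sum_p h^{p,p}=n+2$, hard Lefschetz gives $h^{p,p}\geq 1$, and Poincar\'e duality forces the unique excess to sit at $p_0=n/2$, whence $n$ is even and $\mathrm{rk}\,N^1(X)=1$. The genuine gap is in your second step, which is the entire core of the theorem. Writing $E_i=\mathcal{O}_X(a_iH)$ and $-K_X=\lambda H$, the real work is to show that the vanishings $\chi(E_j,E_i)=P(a_i-a_j)=0$ ($i<j$), where $P(t)=\chi(\mathcal{O}_X(tH))$ is a degree-$n$ polynomial with at most $n$ integer roots satisfying $P(0)=1$ and the Serre-duality symmetry $P(t)=(-1)^nP(-t-\lambda)$, force the $a_i$ into an arithmetic progression of step $\pm 1$ and pin the root set of $P$ to $\{-1,\dots,-(n-1)\}$, $\{1,\dots,n-1\}$ or $\{1,\dots,n\}$, hence $\lambda\in\{n,-n,-n-1\}$; only then do Kobayashi--Ochiai (for $\lambda=n$) and ampleness of $K_X$ (for $\lambda<0$) give the dichotomy $Q^n$ versus general type. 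You assert this entire classification in one sentence (``indices $r<n$ are excluded by playing \dots\ off one another'') and explicitly flag it as ``the main obstacle,'' so the proposal does not actually establish the statement; the exclusion of $n=4$ also lives precisely inside this omitted combinatorial analysis, not in a ``sporadic Fano fourfold.''

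There is also a concrete error in your argument for the strong case: you claim $H^0(X,\mathcal{O}((a_n-a_1+s)H))\neq 0$ because the coefficient is positive and $H$ is ample, but ample multiples need not be effective --- indeed in the general-type cases one has $K_X=sH$ with $s\geq 1$ and yet $h^0(K_X)=h^{n,0}(X)=0$ by your own Step 1, so positivity alone proves nothing here. The correct mechanism is numerical: in the general-type cases the $a_i$ are strictly decreasing, so for $i<j$ one has $\Hom(E_i,E_j)=H^0(\mathcal{O}((a_j-a_i)H))=0$ because $a_j-a_i<0$, while $\chi(E_i,E_j)=P(a_j-a_i)\neq 0$ since $P$ has only positive roots there; hence some $\Ext^{k}(E_i,E_j)\neq 0$ with $k\geq 1$, contradicting strongness. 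Your Serre-duality route would need this kind of Euler-characteristic input to be salvaged.
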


It is a little unexpected that the general type variety was not excluded by assuming the fullness of exceptional collection on $X$. If we assume the Orlov's folklore conjecture, $X$ should be a rational variety, the general type varieties can be excluded automatically. Indeed, we are able to exclude the general type varieties by applying the techniques of anticanonical pseudo height of exceptional collection(or corresponding toric system) as in the proof of Theorem 7.5. Thus, we improve Li's theorem as follows:

\begin{theorem}
Let $X$ be a smooth projective variety of dimension $n$($n\geq 3$ and $n\neq 4$). Suppose that there is a full exceptional collection $\mathcal{C}$ of $D^b(X)$ which consists of coherent sheaves. Assume that the length of $\mathcal{C}$ is $n+2$ and $\mathcal{C}$ contains a sub-collection $\{E_1,\ldots,E_n\}$ where $E_i(1\leq i\leq n)$ are line bundles. Then $X$ is an even-dimensional variety and $X$ is isomorphic to $Q^n$.
\end{theorem}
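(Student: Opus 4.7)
The plan is to combine Li's classification (Theorem 7.9) with the anticanonical pseudo-height machinery of Section 6, following the blueprint of Theorem 7.5. Li's theorem already gives the dichotomy $X\cong Q^n$ or $X$ is of general type, so the task reduces to excluding the latter. Assume for contradiction that $X$ is of general type. Writing $\mathrm{Pic}(X)=\mathbb{Z}H$ and $E_i=\mathcal{O}_X(a_iH)$ for $1\leq i\leq n$, the exceptionality of the line-bundle sub-collection forces $H^0((a_i-a_j)H)=0$ for all $i<j$, so $a_1<a_2<\cdots<a_n$; consequently each partial sum $A_i+\cdots+A_{j-1}=(a_j-a_i)H$ inside the line-bundle block is very ample, has a nonzero section, and contributes $e(E_i,E_j)=0$ in the pseudo-height sum. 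Since $\mathrm{Pic}(X)=\mathbb{Z}H$, the classes $c_1(E_{n+1})=b_1H$ and $c_1(E_{n+2})=b_2H$ are also multiples of $H$, and exceptionality forces $b_1,b_2>a_n$.

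Next I would invoke the higher-dimensional analog of Kuznetsov's bound (Lemma 6.2), namely $ph_{ac}(\mathbb{E})\leq -n$ for any full exceptional collection on a smooth projective $n$-fold. Bigness of $K_X$ under general type implies that no line bundle of the form $-K_X-F$ with $F$ a positive multiple of $H$ admits global sections. Applied to the closing term of the pseudo-height expression, whose first Chern class is of this form for every admissible tuple $a_0<\cdots<a_p$ (since $c_1(E_{a_p})>c_1(E_{a_0})$), we get $e(E_{a_p},E_{a_0}\otimes\omega_X^{-1})\geq 1$ always. Combined with non-negativity of the other $e$-values and $p\leq n+1$, this yields
$$\sum_{j=1}^p e(E_{a_{j-1}},E_{a_j})+e(E_{a_p},E_{a_0}(-K_X))-p\geq 1-p\geq -n,$$
so $ph_{ac}(\mathbb{E})\geq -n$.

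It remains to upgrade this to a strict inequality in order to contradict $ph_{ac}\leq -n$. Equality would force $p=n+1$, $a_j=j$, and every intermediate $e$-value to vanish, i.e.\ both $\mathrm{Hom}(E_n,E_{n+1})$ and $\mathrm{Hom}(E_{n+1},E_{n+2})$ nonzero. I would rule this out by a case analysis analogous to the combinatorial games in the proof of Theorem 7.5: using $\mathrm{Ext}^*(E_{n+2},E_{n+1})=0$ together with Serre duality and bigness of $K_X$, one shows that the required configuration of $c_1(E_{n+1}),c_1(E_{n+2})$ on the $\mathbb{Z}H$-axis is incompatible with $-K_X-(b_2-a_1)H$ being non-effective. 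This extra obstruction furnishes the missing $+1$, so $ph_{ac}(\mathbb{E})\geq 1-n>-n$, contradicting Kuznetsov.

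The hard part will be this strictness step: the non-negativity estimate alone only yields $ph_{ac}\geq -n$, and the missing $+1$ must come from a careful numerical argument controlling the positions of $c_1(E_{n+1})$ and $c_1(E_{n+2})$ under both exceptionality and bigness of $K_X$. Once this is carried out, general type is excluded and Li's theorem forces $X\cong Q^n$; the evenness of $n$ is part of Li's statement.
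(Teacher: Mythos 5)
There is a genuine gap, and it originates in the very first step of your setup. You claim that exceptionality of the line-bundle block forces $H^0((a_i-a_j)H)=0$ for $i<j$ and hence $a_1<a_2<\cdots<a_n$, so that the partial sums $(a_j-a_i)H$ are ample with nonzero sections and contribute $e(E_i,E_j)=0$. This is backwards. First, the inference is invalid: on a variety of general type with $\mathrm{Pic}(X)=\mathbb{Z}H$, a small positive multiple $mH$ of the ample generator need not be effective, so $H^0((a_i-a_j)H)=0$ does not force $a_i<a_j$ (nor does ampleness of $(a_j-a_i)H$ give it a section). Second, Li's classification --- which is the input the paper actually uses --- shows that in the general-type cases (2)--(4) the sequence $(a_1,\ldots,a_n)$ is \emph{decreasing}, $a_{i+1}=a_i-1$, so the associated toric system has $A_i=-H$ for the line-bundle block, and by Kodaira vanishing $h^0(-mH)=0$ for $m>0$, whence $e(E_i,E_j)\geq 1$ for every pair of line bundles with $i<j$. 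These are precisely the terms you set to zero.

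This sign error is not cosmetic: it destroys the quantitative content of the argument. The paper's proof harvests the $n-1$ consecutive forward terms inside the line-bundle block, each contributing at least $1$, to obtain $ph_{ac}(\mathbb{E})\geq (n-1)-(n+1)=-2$, which contradicts Kuznetsov's bound $ph_{ac}(\mathbb{E})\leq -n$ as soon as $n\geq 3$. Your version, having discarded those contributions, only reaches $ph_{ac}(\mathbb{E})\geq 1-p\geq -n$, which does not contradict anything, and you then need a ``strictness step'' to gain one extra unit. You acknowledge that this step is the hard part and do not carry it out; the sketch offered (controlling $c_1(E_{n+1}),c_1(E_{n+2})$ via Serre duality and bigness of $K_X$) is not an argument. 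With the correct (decreasing) monotonicity from Li's table no such strictness step is needed, and the margin is $n-2$ units rather than zero. To repair your proposal you would need to start from the actual shape of the sequences in cases (2)--(4) of Li's theorem rather than deriving their monotonicity from effectivity considerations that do not hold on general-type varieties.
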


\begin{proof}
It is sufficient to exclude the $(2),(3),(4)$ sequence of $(a_1,\ldots,a_n)$ in the following tables copied from \cite[Th 1.2]{Li}. Thus, the only possibility left is $X$ to be quadrics.
$$\begin{tabular}{|c|c|c|c|c|}
  \hline
  &$(a_1,\cdots,a_n)$ & roots of $P(a)$ & $\lambda$ & $X $\\
  \hline
  (1)& $(a_1,a_1+1,\cdots,a_1+(n-1))$ & $\{-1,-2,\cdots,-(n-1)\}$ & $n$ & $Q^n$\\
  \hline

 (2)&$(a_1,a_1-1,\cdots,a_1-(n-1))$ &  $\{1,2,\cdots,(n-1)\}$ & $-n$ & general type\\
  \hline
  (3)&$(a_1,a_1-1,\cdots,a_1-(n-1))$ & $\{1,2,\cdots,n\}$ & $-n-1$ & general type \\
  \hline

 (4)&$(a_1,\cdots,\widehat{a_1-k},\cdots,a_1-n)$ & $\{1,2,\cdots,n\}$ & $-n-1$ & general type\\
  \hline
\end{tabular}$$

For sequence $(2)$, the corresponding full exceptional collection is $\mathbb{E}:=(O_X(a_1H),O_X((a_1-1)H),\ldots,O_X((a_1-(n-1))H),S_1,S_2)$, where $S_1,S_2$ are two coherent sheaves on $X$. Thus the corresponding full exceptional toric system is given by $\mathcal{A}:=(-H,-H,\ldots,-H,B_1,B_2,C_1)$, where $B_1=c_1(S_1)-(a_1-(n-1))H, B_2=c_1(S_2)-c_1(S_1), C_1=-K_X-(A_1+\ldots+A_{n+1})$. Note that any positive multiple of $H$ is an ample divisor, thus $h^0(O_X(-mH))=0, m>0$ by Kodaira vanishing theorem. Hence $-mH$ is not effective divisor for any $m>0$ and $e(A_i+\ldots+A_j)=e(-(j-i+1)H)\geq 1$ for any $j>i$. By playing the combinatoric game on partitions of the toric system $\mathcal{A}$ as in the proof of Theorem 7.5, it is not hard to see that the anti-canonical pseudo height of $\mathcal{A}$ or $\mathbb{E}$:

$$ph_{ac}(\mathcal{A})\geq (n-1)-p=(n-1)-(n+1)=-2$$ Where $p$ is equal to length of full exceptional collection substract by $1$ and the minimal value of $ph_{ac}$ is reached by assuming every divisor containing one of $B_1,B_2,C_1$ is effective(or $e(E_k,E_{k'})=0$ for $k$ or $k'\in\{n+1,n+2\}$ and $e(E_{a_p},E_{a_0}\otimes\omega^{-1}_X)=0$).On the otherhand, by \cite[Corollary 6.2]{Kuz}, $ph_{ac}(\mathbb{E})\leq -n$ since $\mathbb{E}$ is a full exceptional collection. By assumption, $n\geq 3$, there is a contradiction. Thus the full exceptional collection corresponding to $(2)$ is not possible. 

For sequence $(3),(4)$, almost the same argument will apply, we omit the details here and the full exceptional collection corresponding to these two sequences are either impossible.
\end{proof}

\begin{remark}
By Kodaira vanishing theorem, $h^q(O_X(-mH))=0$ for every $q\leq n-1$, thus $e(-(j-i+1)H)\geq n$. So $ph_{ac}(\mathbb{E})$ is usually very positive, thus $\mathbb{E}$ can not be full in the case of $(2),(3),(4)$. 
\end{remark}

\section{Equivalences of various notions of cyclic strong exceptional collection of line bundles}

In this section, we prove Theorem $1.8$. Let us recall the definition of tilting bundles on a smooth projective surface $X$.

\begin{definition}
Let $D^b(X)$ be a bounded derived category of coherent sheaves on a smooth projective surface $X$. An object, $T$, of $D^b(X)$ is called a tilting object if the following two conditions hold:\begin{enumerate}
    \item $Hom(T,T[i])=0$ for all $i\neq 0$;
    \item $T$ generates $D^b(X)$.
\end{enumerate}

$T$ is called a tilting bundle, if in addition, $T$ is a locally free sheaf on $X$
\end{definition}

The proof is divided into the following propositions:

\begin{predl}
A tilting bundle $T$ on $X$ is pull back iff $T$ is $2$-hereditary. In particular, the pull back exceptional collection of line bundles coincides with the $2$-hereditary exceptional collection of line bundles. 
\end{predl}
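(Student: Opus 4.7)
The plan is to translate the tilting condition for $\pi^*T$ on $Tot(\omega_X)$ into an $\Ext$-vanishing condition on $X$ via the affine projection formula, and to observe that the resulting condition is precisely the $2$-hereditary condition.

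First I would record that $Tot(\omega_X)=\Spec_X(\mathrm{Sym}^\bullet \omega_X^{-1})$, so $\pi$ is an affine morphism, $R\pi_*=\pi_*$ on quasi-coherent sheaves, and
\[
\pi_*\O_{Tot(\omega_X)}\;\cong\;\bigoplus_{i\ge 0}\omega_X^{-i}
\]
as quasi-coherent $\O_X$-algebras. Since $T$ is locally free, the projection formula yields
\[
R\Hom_{Tot(\omega_X)}(\pi^*T,\pi^*T)\;\cong\;R\Hom_X\!\Bigl(T,\,T\otimes\bigoplus_{i\ge 0}\omega_X^{-i}\Bigr),
\]
and, taking cohomology, the key identification
\[
\Ext^k_{Tot(\omega_X)}(\pi^*T,\pi^*T)\;\cong\;\bigoplus_{i\ge 0}\Ext^k_X(T,T\otimes\omega_X^{-i}),\qquad k\in\Z.
\]

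The equivalence of the two notions is now immediate. The left-hand side vanishes for every $k\ne 0$ precisely when $\Ext^k_X(T,T\otimes\omega_X^{-i})=0$ for all $i\ge 0$ and $k\ge 1$, because negative $\Ext$'s between the sheaves $T$ and $T\otimes\omega_X^{-i}$ vanish automatically. This is exactly the $2$-hereditary condition from Definition 1.7. For the remaining generation clause in the tilting condition on $Tot(\omega_X)$, I would invoke the standard fact that pullback along the affine morphism $\pi$ preserves generators: every object of $D^b(Tot(\omega_X))$ is built out of pullbacks from $X$, so if $T$ generates $D^b(X)$ then $\pi^*T$ generates $D^b(Tot(\omega_X))$. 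The final sentence of the proposition is the specialization of this equivalence to $T=\bigoplus_i E_i$.

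The entire content is the computation of $\pi_*\O_{Tot(\omega_X)}$ together with the projection formula, so there is no serious obstacle. The only minor subtlety is handling the generation statement on the non-compact variety $Tot(\omega_X)$, which is resolved by the affineness of $\pi$.
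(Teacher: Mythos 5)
Your proof is correct and follows essentially the same route as the paper: both compute $\pi_*\O_{Tot(\omega_X)}\cong\bigoplus_{i\ge 0}\omega_X^{-i}$ and use adjunction plus the projection formula to identify $\Ext^k_{Tot(\omega_X)}(\pi^*T,\pi^*T)$ with $\bigoplus_{i\ge 0}\Ext^k_X(T,T\otimes\omega_X^{-i})$. You are in fact slightly more careful than the paper, which omits the verification that $\pi^*T$ generates $D^b(Tot(\omega_X))$; your appeal to the affineness of $\pi$ fills that small gap.
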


\begin{proof}
The proof is borrowed from \cite[Lemma 3.19]{BF}:
We have $\pi_*O_{Tot(\omega_X)}=\bigoplus_{i\geq 0}\omega_X^{-i}$. $\pi^*(T)$ is tilting bundle on $Tot(\omega_X)$ means that:
$$ Hom_{Tot(\omega_X)}(\pi^*T,\pi^*T[k])\cong Hom_X(T,\pi_*\pi^*(T)[k])\cong Hom_X(T,T\otimes\pi_*O_{Tot(\omega_X)}[k])$$
$$\cong\bigoplus_{i\geq 0}Hom_X(T,T\otimes\omega_X^{-i}[k])=0 $$.

Then we have:
$$Ext^k(T,T\otimes\omega_X^{-i})=0$$ for $k\geq 1$ and $i\geq 0$.

Thus, $T$ is $2$-hereditary tilting bundle.
\end{proof}

\begin{predl}
Let $T$ be a tilting bundle on smooth projective surface $X$ which is a direct sum of an exceptional collection of line bundles. Then $T$ is almost pull back iff $T$ is cyclic strong. In particular, almost pull back exceptional collection of line bundles coincides with cyclic strong exceptional collection of line bundles.
\end{predl}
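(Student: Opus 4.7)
The plan is to verify both directions. One direction is essentially tautological: almost pull back asserts $\Ext^k(E_i, E_j \otimes \omega_X^{-1}) = 0$ for every pair $(i,j)$ and $k \geq 1$, and simply renaming the indices specializes this to the cyclic strong vanishing $\Ext^k(E_j, E_i \otimes \omega_X^{-1}) = 0$ for $j > i$. So all the content is in the converse, which I now sketch.

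Assume $T$ is cyclic strong; I must check $\Ext^k(E_i, E_j \otimes \omega_X^{-1}) = 0$ for all $i, j$ and $k \geq 1$. The case $i > j$ is again immediate from the cyclic strong hypothesis after relabeling, so only the cases $i \leq j$ remain. A crucial input is \cite[Lemma 15.1]{EXZ}, which forces $X$ to be a weak del Pezzo surface; in particular $-K_X$ is nef and big, $h^0(-K_X) = K_X^2 + 1 \geq 2$ so $-K_X$ is effective, and by Bertini a general member $C \in |{-K_X}|$ is a smooth integral curve of arithmetic genus $1$.

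For $i = j$ the desired vanishing reduces to $H^k(X, -K_X) = 0$ for $k \geq 1$, which is Kawamata-Viehweg applied to the nef big divisor $-K_X$. For $i < j$ the divisor $D := D_j - D_i = A_i + \cdots + A_{j-1}$ is strong left-orthogonal by Proposition~\ref{prop_straightforward}, so I must show $H^k(X, D - K_X) = 0$ for $k \geq 1$. For $k = 2$ I use Serre duality: $H^2(X, D - K_X) \cong H^0(X, 2K_X - D)^*$, and if $2K_X - D$ were effective then $-D = (2K_X - D) + 2(-K_X)$ would be a sum of effective divisors, contradicting the slo property $h^0(X, -D) = 0$.

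The key step is the $H^1$ vanishing, where I expect the main obstacle to sit. Twisting the defining sequence of $C$ by $\O_X(D - K_X)$ produces
\[
0 \to \O_X(D) \to \O_X(D - K_X) \to \O_C(D - K_X) \to 0,
\]
and since $D$ is slo the vanishings $H^i(X, D) = 0$ for $i \geq 1$ collapse the long exact sequence to an isomorphism $H^1(X, D - K_X) \cong H^1(C, \O_C(D - K_X))$. On the smooth elliptic curve $C$ this group vanishes whenever the restricted line bundle has positive degree, and indeed
\[
\deg_C \O_C(D - K_X) = (D - K_X) \cdot (-K_X) = D^2 + K_X^2 + 2 \geq 1,
\]
using $-D \cdot K_X = D^2 + 2$ from the nlo condition together with the bound $D^2 \geq -2$ from Theorem~\ref{th_checkonlyminustwo}(5) and $K_X^2 \geq 1$. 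The only delicate point is to ensure that a smooth integral anti-canonical member always exists on a weak del Pezzo surface; this is a standard Bertini argument, with the mild subtlety that in degree one the general member must pass through the unique base point of $|{-K_X}|$.
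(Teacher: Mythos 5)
Your argument is correct and follows essentially the same route as the paper: the nontrivial direction is reduced to the vanishing $H^k(X, D-K_X)=0$ for $D$ strong left-orthogonal, proved by restricting to a general anticanonical elliptic curve $C$, using $H^1(D)=H^2(D)=0$ to identify $H^1(D-K_X)$ with $H^1(C,\O_C(D-K_X))$, and killing the latter by the degree computation $\deg_C\O_C(D-K_X)=D^2+2+K_X^2>0$. Your minor variations (Serre duality for the $H^2$ term instead of reading it off the same long exact sequence, and the weaker bound $K_X^2\ge 1$ in place of the paper's $K_X^2\ge 3$ from the classification in \cite{EXZ}) are harmless.
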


\begin{proof}
One direction is trivial: Let $T$ be an almost pull back tilting bundle formed by a direct sum of an exceptional collection of line bundles: $(E_0,\ldots,E_n)$. Then by Definition $1.5$, $T$ is cyclic strong. 
Now, Let $(E_0,\ldots,E_n)$ be a cyclic strong exceptional collection of line bundles. We can assume that $E_0=O_X,E_1=O_X(A_1),E_2=O(A_1+A_2),\ldots, E_n=O(A_1+A_2+\ldots+A_n)$, where $A_1,A_2,\ldots,A_{n+1}$ is a cyclic strong exceptional toric system on $X$. Then $D:=\sum_{I\subset [n]}A_I$ is strong left-orthogonal divisor for any (cyclic) interval $I$ with length $l\leq n$. It is enough to verify the following three statements:\begin{enumerate}
\item $H^i(X,O(-D-K_X)=0,i\geq 1$
\item $H^i(X,O(D-K_X)=0,i\geq 1$
\item $H^i(X_X,O(-K_X))=0, i\geq 1$
\end{enumerate}

For $(1)$, since $-K_X-D=-K_X-\sum_{I\subset [n]}A_I=\sum_{J\subset [n]}A_J$, where $J$ is also a (cyclic) interval of length $l\leq n$. Thus $-K_X-D$ is also a slo divisor, so $H^i(X,O(-D-K_X)=0, i\geq 1$. For $(3)$: $X$ is a weak del Pezzo surface with $d=K_X^2\geq 3$ by Theorem $15.2$ in \cite{EXZ}. By Lemma $8.3.1$ in \cite{Do}, $H^i(X,O(-K_X))=0,i\geq 1$.

For $(2)$: we consider the standard exact sequence associated to a general member $C\in |-K_X|$, where $C$ is an elliptic curve since $|-K_X|$ is base point free. 
$$0\rightarrow O(K_X)\rightarrow O_X\rightarrow O_C\rightarrow 0 $$
Twisting by $O(D-K_X)$, we get:
$$0\rightarrow O(D)\rightarrow O(D-K_X)\rightarrow O_C(D-K_X)\rightarrow 0 $$

Thus we have long exact sequence of cohomology of line bundles:
$$0\rightarrow H^0(O(D))\rightarrow H^0(O(D-K_X))\rightarrow H^0(O_C(D-K_X))$$
$$\rightarrow H^1(O(D))\rightarrow H^1(O(D-K_X))\rightarrow H^1(O_C(D-K_X))\rightarrow H^2(O(D))$$
$$\rightarrow H^2(O(D-K_X)\rightarrow 0 $$

$H^1(O(D))=H^2(O(D))=0$ since $D$ is slo. Thus, it is enough to show that $H^1(O_C(D-K_X)=0$. $D$ slo divisor, so $D^2+D.K_X=-2$. Note that $C.(D-K_X)=-K_X.D+K_X^2=D^2+2+K_X^2$. By Theorem $4.7, (5)$, $D^2\geq -2$, so $C.(D-K_X)\geq K_X^2\geq 3$. Thus, $deg(O_C(K_X-D))=C.(K_X-D)=-K_X^2\leq -3$, so by Serre duality,  $H^1(C,O_C(D-K_X))=H^0(C,O_C(K_X-D))=0$. Therefore, $H^1(O(D-K_X))=0$ and $H^2(O(D-K_X))=0$

\end{proof}

\begin{predl}
Let $T$ be a tilting bundle on $X$ which is direct sum of an exceptional collection of line bundles. Then $T$ is pull back iff $T$ is cyclic strong. In particular, the pull back exceptional collection of line bundles coincides with the cyclic strong exceptional collection of line bundles.
\end{predl}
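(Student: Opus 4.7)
The plan is to combine Propositions 8.2 and 8.3 and then run an induction on the number of anticanonical twists. By Proposition 8.2, being pull back is equivalent to being $2$-hereditary, and by Proposition 8.3, being cyclic strong is equivalent to being almost pull back. It thus suffices to show that a tilting bundle which is a direct sum of an exceptional collection of line bundles is $2$-hereditary if and only if it is almost pull back. The forward direction is trivial: specializing the $2$-hereditary vanishing $\Ext^k(T, T\otimes\omega_X^{-i}) = 0$ to $i = 1$ yields precisely the almost pull back condition, hence cyclic strong by Proposition 8.3.

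For the nontrivial direction, assume $T = \bigoplus_{k=0}^n E_k$ is cyclic strong, with associated cyclic strong exceptional toric system $(A_1,\ldots,A_{n+1})$ satisfying $\sum_j A_j = -K_X$; by Theorem 15.2 of \cite{EXZ}, $X$ is a weak del Pezzo surface of degree $d = K_X^2 \ge 3$. I must show that $H^k(O_X(E_b - E_a + i(-K_X))) = 0$ for all $k \ge 1$, $i \ge 0$, and $0 \le a,b \le n$. Using the relation $\sum_j A_j = -K_X$, every such divisor can be rewritten as $D + j(-K_X)$ where $D$ is a cyclic segment of the toric system and $j \ge 0$: for $a \le b$ this is immediate, while for $a > b$ and $i \ge 1$ one observes
\[
E_b - E_a + i(-K_X) = (A_1+\ldots+A_b+A_{a+1}+\ldots+A_{n+1}) + (i-1)(-K_X),
\]
identifying the parenthesis with the complementary cyclic segment of length $\le n$. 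The only residual case is $a > b$, $i = 0$, which is handled directly by the exceptional condition $\Ext^k(E_a, E_b) = 0$ for all $k \ge 0$.

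The induction is now on $j$. The base case $j = 0$ is immediate: either $D$ is empty (and $H^{\ge 1}(O_X) = 0$ since $X$ is rational), or $D$ is a nonempty cyclic segment of length $\le n$, hence strong left-orthogonal by cyclic strong. For the inductive step, I pick a smooth elliptic curve $C \in |-K_X|$, which exists by base-point-freeness of $-K_X$ on a weak del Pezzo surface with $d \ge 3$ (see \cite[Corollary 8.3.3]{Do}), twist the standard sequence $0 \to O_X(K_X) \to O_X \to O_C \to 0$ by $O_X(D + (j+1)(-K_X))$, and take cohomology. The induction hypothesis kills the left term in the long exact sequence, the vanishing $H^{\ge 2}(O_C) = 0$ handles $k \ge 2$, and the remaining $k = 1$ case follows from
\[
\deg_C\bigl(D + (j+1)(-K_X)\bigr) = -D\cdot K_X + (j+1)d = (D^2 + 2) + (j+1)d \ge d \ge 3,
\]
using $D^2 \ge -2$ from Theorem 4.7(5), since a line bundle of positive degree on an elliptic curve has vanishing $H^1$. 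The main obstacle is the reduction step above: turning the apparently exotic divisors $E_b - E_a + i(-K_X)$ (for $a > b$) into the standard "cyclic segment plus nonnegative rotations" form, which uses the toric relation $\sum_j A_j = -K_X$ in an essential way. Once this reformulation is secured, the induction runs uniformly and the proof concludes.
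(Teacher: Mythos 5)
Your proof is correct and follows essentially the same route as the paper's: both reduce the pull back condition to the vanishing $H^k(D-rK_X)=0$ for $k\ge 1$, $r\ge 0$, and induct on $r$ via the short exact sequence of a smooth anticanonical elliptic curve, with the degree bound coming from $D^2\ge -2$ (Theorem 4.7(5)) and $K_X^2\ge 3$. Your upfront rewriting of $E_b-E_a+i(-K_X)$ as a cyclic segment plus a nonnegative multiple of $-K_X$ merely unifies the paper's separate cases ($D$ slo versus $-D$ slo), where the same complementary-segment observation appears at the $r=1$ step of the paper's third case.
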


\begin{proof}
One direction is trivial: If $T$ is direct sum of exceptional collection of line bundles: $(E_0,\ldots,E_n)$. By proof of Proposition $8.2$: $$Ext^k(E_i,E_j\otimes\omega_X^{-r})=0$$ for all $i,j$ and $r\geq 0$, $k\geq 1$. By definition $1.5$, $T$ is cyclic strong. On the other hand, if $T$ is cyclic strong. we can assume that $E_0=O_X,E_1=O_X(A_1),E_2=O(A_1+A_2),\ldots, E_n=O(A_1+A_2+\ldots+A_n)$, where $\mathcal{A}=(A_1,A_2,\ldots,A_{n+1})$ is a cyclic strong exceptional toric system on $X$. Then $D:=\sum_{I\subset [n]}A_I$ is a strong left-orthogonal divisor for any (cyclic) interval $I$ with length $l\leq n$. To show that 
$$ Ext^k(E_i,E_j\otimes\omega_X^{-r})=0$$ for all $k\geq 1$, all $i,j$ and all $r\geq 0$. In the language of toric system, it is enough to verify that:
$$ H^k(D-rK_X)=0 $$ for all $k\geq 1, r\geq 0$, where $D$ is either trivial, $D$ or $-D$ is a slo divisor. There are three cases:\begin{enumerate}
    \item $D=0$, $H^k(D-rK_X)=H^k(-rK_X)=0$ since $X$ is weak del Pezzo surface by\cite[Theorem $1.3$]{EXZ};
    \item $D$ is slo divisor. \begin{enumerate}
        \item $r=0$, $H^k(D-rK_X)=H^k(D)=0$ for $k\geq 1$ since $D$ is slo. 
        \item $r\geq 1$, we prove by the induction: if $r=1$, $H^k(D-rK_X)=H^k(D-K_X)$. Consider the standard exact sequence:
        $$ 0\rightarrow O(K_X)\rightarrow O_X\rightarrow O_C\rightarrow 0 $$ Where $C\in |-K_X|$ is smooth irreducible curve.
        Twisting by $O(D-K_X)$: 
        $$ 0\rightarrow O(D)\rightarrow O(D-K_X)\rightarrow O_C(D-K_X)\rightarrow 0 $$
        Then we have long exact sequence of cohomology of line bundles:
        $$ \ldots\rightarrow H^1(D)\rightarrow H^1(D-K_X)\rightarrow H^1(O_C(D-K_X))\rightarrow H^2(D)\rightarrow H^2(D-K_X)\rightarrow 0 $$
        $H^1(D)=H^2(D)=0$ since $D$ is slo. $D$ is $R$-class. This means that $D^2+D.K_X=-2$ and $D^2=R$, so $-D.K_X=D^2+2=R+2$. $deg_C(O_C(D-K_X))=-K_X.(D-K_X)=R+2+K_X^2$. Note that $K_X^2\geq 3$ and $R=D^2\geq -2$. Thus, $H^1(O_C(D-K_X))=0$, hence $H^1(D-K_X)=0$. 
        \item Assume that $H^k(D-(r-1)K_X)=0$. Consider the standard exact sequence twisted by $O_X(D-rK_X)$: 
        $$ 0\rightarrow O_X(D-(r-1)K_X)\rightarrow O_X(D-rK_X)\rightarrow O_C(D-rK_X)\rightarrow 0 $$  The long exact sequence of cohomology of line bundles is:
        $$ \ldots\rightarrow H^k(O_X(D-(r-1)K_X)\rightarrow H^k(D-rK_X)\rightarrow H^k(O_C(D-rK_X))\rightarrow\ldots $$
        Thus, $H^k(D-rK_X)=0$ for all $k\geq 1$ and $r\geq 2$ since $deg(O_C(D-rK_X))=R+2+rK_X^2\geq 3r\geq 6$
    \end{enumerate}
    \item $-D$ is slo divisor. \begin{enumerate}
        \item $r=0$, $H^k(D-rK_X)=H^k(D)=0$ since $h^k(-(-D))=h^k(D)=0$ for all $k\geq 1$. 
        \item $r=1$, $H^k(D-rK_X)=H^k(D-K_X)$. Again, we have long exact sequence of cohomology of line bundles:
        $$ \ldots\rightarrow H^1(D)\rightarrow H^1(D-K_X)\rightarrow H^1(O_C(D-K_X))\rightarrow H^2(D)\rightarrow H^2(D-K_X)\rightarrow 0 $$
        $-D$ is slo, then $-D$ is of the form $A_i+\ldots+A_j$ where $[i,j]$ is (cyclic)interval of length $\leq n$. But $-K_X=A_1+\ldots+A_{n+1}$. Then $D-K_X$ is also of the form $A_l+\ldots+A_s$, where $[l,s]$ is a (cyclic)interval of length $\leq n$. So $D-K_X$ is also a slo divisor since $\mathcal{A}$ is cyclic strong exceptional toric system. Then $H^k(D-K_X)=0$ for $k\geq 1$.
        \item $r\geq 2$. Similarly, we prove by the induction and assume that $H^k(D-(r-1)K_X)=0$ for $k\geq 1$ and $r\geq 2$. Then we also have long exact sequence:
        $$ \ldots\rightarrow H^k(O_X(D-(r-1)K_X)\rightarrow H^k(D-rK_X)\rightarrow H^k(O_C(D-rK_X))\rightarrow\ldots $$ 
        $-D$ is slo divisor, so $(-D)^2+(-D).(K_X)=-2$, thus $D^2-D.K_X=-2$. We have: $(-D).K_X=-2-D^2=-2-R$. $deg_C(O_C(D-rK_X))=(-K_X).D+rK_X^2=rK_X^2-R-2$.
        $-D$ is slo divisor of the form $A_i+\ldots+A_j$, where $[i,j]$ is a (cyclic)interval of length $\leq n$. Then $-2\leq (-D)^2=D^2=R\leq K_X^2-2$ by Theorem $4.7,(5)$. Thus $3\leq (r-1)K^2\leq rK_X^2-R-2\leq rK_X^2$. Therefore, $H^k(O_C(D-rK_X))=0$ for $r\geq 2,k\geq 1$. Then by the induction hypothesis, $H^k(D-rK_X)=0$. 
    \end{enumerate}
\end{enumerate}
Thus, $T$ is a pull back tilting bundle.
\end{proof}

\begin{remark}
In fact, under the assumption that $T$ is a direct sum of an exceptional collection of line bundles, one is also able to show that $T$ is a cyclic strong tilting bundle iff $T$ is of generation time $2$. Check \cite[Proposition 16.4]{EXZ} for more details. 
\end{remark}

Next, we give a complete classifications of surfaces having a $2$-hereditary tilting bundle $T$ which is given by a direct sum of exceptional collection of line bundles. The next proposition is due to D.Chan in \cite{CHAN}  We give a new proof by using criterion in Theorem $4.7$. The toric case is worked out in \cite{HP}, check Theorem $8.6$ in \cite{HP} for more details.

\begin{predl}\cite[Theorem 6.5]{CHAN}
Let $X$ be del Pezzo surface of degree $K_X^2\geq 3$. Then the bundle $T$ below are $2$-hereditary tilting bundles on $X$.
\begin{enumerate}
    \item If $r=1$ then $T=O_X(E_1)\oplus O_X(H)\oplus O_X(H+E_1)\oplus O_X(2H)$
    \item If $r=1$ then $T=O_X(E_{12})\oplus O_X(H+E_1)\oplus O_X(H+E_2)\oplus O_X(H+E_{12})\oplus O_X(2H)$
    \item If $3\leq r\leq 6$ then 
    $$T=O_X(E_{123})\oplus\ldots\oplus O_X(E_{123r})\oplus O_X(H+E_{23})\oplus O_X(H+E_{13})\oplus O_X(H+E_{12})\oplus O_X(H+E_{123})\oplus O_X(2H)$$
    
    \end{enumerate}
    where $E_{ij\ldots l}=E_i+E_j+\ldots+E_l$
\end{predl}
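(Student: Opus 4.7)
My plan is to reduce the claim to a statement about cyclic strong exceptional toric systems, where the machinery from Section 4 applies directly. By Propositions 8.2 and 8.4 combined, for a direct sum of an exceptional collection of line bundles on a weak del Pezzo surface of degree $\ge 3$, being a $2$-hereditary tilting bundle is equivalent to the underlying collection being cyclic strong. So it suffices, in each case (1)--(3), to reorder the summands so that the resulting sequence is a cyclic strong exceptional collection of line bundles of maximal length on $X$; fullness, and hence the generation part of the tilting property, will come for free (see below).

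The concrete plan is as follows. In each case I first normalize by a suitable twist so that the first summand becomes $\O_X$ and fix an ordering $(\O_X(D_1),\ldots,\O_X(D_n))$ of the $n=r+3$ summands. For case (3), a natural ordering is
$$\O_X(E_{123}),\,\O_X(E_{1234}),\,\ldots,\,\O_X(E_{123\ldots r}),\,\O_X(H+E_{23}),\,\O_X(H+E_{13}),\,\O_X(H+E_{12}),\,\O_X(H+E_{123}),\,\O_X(2H).$$
I then form the sequence $A_i=D_{i+1}-D_i$ for $i<n$ and $A_n=-K_X-(A_1+\ldots+A_{n-1})$. A direct computation shows that each $A_i$ is one of $E_k-E_{k+1}$, $E_r$, $H-E_{ab}$, $H-E_{123}$ or $H$, so $A_i^2\in\{-2,-1,0,1\}$, and the toric system relations $A_iA_{i+1}=1$ (cyclically) and $A_iA_j=0$ for non-adjacent $i,j$ can be read off by inspection. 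By Proposition 4.4 the ordered sequence is then a numerically exceptional collection, and since $A_i^2\ge -2$ for all $i$ and $X$ is del Pezzo (so there are no effective nor anti-effective $(-2)$-classes, by Corollary 3.2), Corollary 4.9 upgrades the toric system to a cyclic strong exceptional one. Cases (1) and (2) are shorter computations of the same type, producing toric systems of type 5a and 6a from Table 6.

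Fullness of the collection is automatic: on a del Pezzo surface of degree $\ge 2$ any exceptional collection of maximal length is full (Kuleshov--Orlov, quoted after Theorem 4.7 in the text). Hence the direct sum is indeed a tilting bundle, and Propositions 8.2 and 8.4 then yield the $2$-hereditary property. I expect the main difficulty to be purely bookkeeping: verifying that the explicit orderings in case (3) satisfy the toric system axioms uniformly in $r\in\{3,4,5,6\}$, and checking that the smaller cases (1) and (2) produce toric systems of the shapes claimed. Everything conceptual is already encoded in Corollary 4.9 and in the equivalences proved earlier in this section.
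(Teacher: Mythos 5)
Your proposal follows essentially the same route as the paper's proof: normalize the summands, form the toric system of successive differences, verify that all $A_i^2\ge -2$ and the toric-system axioms hold, invoke Corollary 4.9 (no effective or anti-effective $(-2)$-classes on a del Pezzo surface) to conclude the system is cyclic strong exceptional, and then apply the equivalences of Section 8 (Theorem 1.8) to get $2$-hereditarity. Your added remark that fullness follows from Kuleshov--Orlov merely makes explicit the generation half of the tilting property, which the paper leaves implicit.
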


\begin{proof}
For every collection of line bundles $O_X(D_1),\ldots ,O_X(D_n)$, we construct a collection of divisors $A_1,\ldots,A_n$ by $A_1=D_2-D_1,\ldots, A_{n-1}=D_n-D_{n-1}, A_n=-K_X-\sum_{i=1}^{n-1}A_i$. Then the corresponding collection of divisors are given by:\begin{enumerate}
    \item If $r=1$, $H-E_1,E_1,H-E_1,H$
    \item If $r=2$, $H-E_2,E_2-E_1,E_1,H-E_1-E_2,H$
    \item If $r=3$, $H-E_1,E_1-E_2,E_2-E_3,E_3,H-E_1-E_2-E_3, H$
    \item If $r=4$, $E_4,H-E_1-E_4,E_1-E_2,E_2-E_3,E_3,H-E_1-E_2-E_3, H-E_4$
    \item If $r=5$, $E_4,E_5-E_4,H-E_1-E_5,E_1-E_2,E_2-E_3,E_3,H-E_1-E_2-E_3,H-E_4-E_5$
    \item If $r=6$, $E_4,E_5-E_4,E_6-E_5,H-E_1-E_6,E_1-E_2,E_2-E_3,E_3,H-E_1-E_2-E_3,H-E_4-E_5-E_6$
\end{enumerate}

It is easy to verify that these collections of divisors are all toric systems with $A_i^2\geq -2$ on $X$, so by Corollary $4.9$, they are cyclic strong exceptional toric systems and the corresponding collections of line bundles are cyclic strong exceptional collection of line bundles, by Theorem $1.8$, $T$ is a quasi-canonical $2$-hereditary tilting bundle.
\end{proof}


\begin{predl}\cite[Theorem 15.3]{EXZ}
Toric systems in table $6$ are cyclic strong exceptional. Therefore weak del Pezzo surfaces from table $6$ admit cyclic strong exceptional toric systems and the $2$-hereditary tilting bundles are constructed as $T=O_X\oplus O_X(A_1)\oplus\ldots\oplus O_X(A_1+\ldots+A_{n-1})$, where $\mathcal{A}=(A_1,\ldots,A_n)$ are cyclic strong exceptional toric systems in the table. Moreover, those surfaces in table $6$ are the only surfaces admitting such systems.

\begin{table}[h]
\begin{center}
\caption{Cyclic strong exceptional toric systems}
\label{table_yes}
\begin{tabular}{|p{2cm}|p{2cm}|p{10cm}|}
 \hline
 degree & type & toric system \\
 \hline
 $9$ & $\P^2$ & $L,L,L$  \\
 \hline
 $8$ & $F_0$ & $H_1,H_2,H_1,H_2$  \\
 \hline
 $8$ & $F_1$ & $L_1,E_1,L_{1},L$  \\
 \hline
 $8$ & $F_2$ & $F,S-F,F,S-F$ (where $F^2=0, S^2=2, FS=0$)  \\
 \hline
 $7$ & any & $L_{1},E_1,L_{12},E_2,L_{2}$  \\
 \hline
 $6$ & any & $L_{13},E_1,L_{12},E_2,L_{23},E_3$  \\
 \hline
 $5$ &  $\emptyset$ & ${L_{134}},E_4,{E_1-E_4},L_{12},E_2,L_{23},E_3$\\
 \hline
 $5$ &  $A_1$ &  the above \\  \hline
 $5$ &  $2A_1$ & the above \\  \hline
 $5$ &  $A_2$ & the above  \\  \hline
 $5$ &  $A_1+A_2$ & the above \\  \hline
 $4$ & $\emptyset$ & ${L_{134}},E_4,{E_1-E_4},L_{12},{E_2-E_5},E_5,{L_{235}},E_3$  \\
 \hline
 $4$ & $A_1$ & the above  \\  \hline
 $4$ & $2A_1,9$ & the above  \\  \hline
 $4$ & $2A_1,8$ & the above  \\  \hline
 $4$ & $A_2$ & the above  \\  \hline
 $4$ & $3A_1$ & the above  \\  \hline
 $4$ & $A_1+A_2$ & the above  \\  \hline
 $4$ & $A_3,4$ & the above  \\  \hline
 $4$ & $4A_1$ & the above  \\  \hline
 $4$ & $2A_1+A_2$ & the above  \\  \hline
 $4$ & $A_1+A_3$ & the above  \\  \hline
 $4$ & $2A_1+A_3$ & the above  \\  \hline
 $3$ & $\emptyset$ & ${E_2-E_4,L_{125}},E_5,{E_1-E_5,L_{136}},E_6,{E_3-E_6,L_{234}},E_4$   \\  \hline
 $3$ & $A_1$ & the above  \\  \hline
 $3$ & $2A_1$ & the above  \\  \hline
 $3$ & $A_2$ & the above  \\  \hline
 $3$ & $3A_1$ & the above  \\  \hline
 $3$ & $A_1+A_2$ & the above  \\  \hline
 $3$ & $4A_1$ & the above  \\  \hline
 $3$ & $2A_1+A_2$ & the above  \\  \hline
 $3$ & $2A_2$ & the above  \\  \hline
 $3$ & $A_1+2A_2$ & the above  \\  \hline
 $3$ & $3A_2$ & the above  \\  \hline
\end{tabular}
\end{center}
\end{table}

\end{predl}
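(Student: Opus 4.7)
The plan is to split the statement into two directions. For the direct statement, that each toric system $\mathcal{A}=(A_1,\ldots,A_n)$ listed in Table~\ref{table_yes} is cyclic strong exceptional, I would invoke Theorem~\ref{th_checkonlyminustwo}(4): since each listed toric system satisfies $A_i^2\ge -2$ entry-wise, it suffices to check that for every cyclic segment $[k\ldots l]\subset [1\ldots n]$ with $A_{k\ldots l}^2=-2$, the divisor $A_{k\ldots l}$ is strong left-orthogonal, equivalently by Corollary~\ref{cor_effslo} that $A_{k\ldots l}$ is neither an effective nor an anti-effective $(-2)$-class. Using the identity \eqref{eq_Akl2} one enumerates the finite list of such cyclic segments in each row, expresses the corresponding sum in the basis $L,E_1,\ldots,E_n$ of $\Pic X$, and reads off non-effectiveness from intersections with the irreducible $(-2)$-curves (which are determined by the type of $X$). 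Since the roster of types in the table already fixes the lattice $(\Pic X, K_X)$ and the collection of negative curves, this is a finite case-by-case verification.

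For the converse, suppose $X$ is a smooth projective surface with $\chi(\O_X)=1$ carrying a cyclic strong exceptional toric system $\mathcal{A}=(A_1,\ldots,A_n)$. Each $A_i$ is slo, hence $A_i^2\ge -2$, so the sequence $\mathcal{A}^2=(A_1^2,\ldots,A_n^2)$ is cyclic strong admissible. By Proposition~\ref{prop_csadm} this sequence must, up to cyclic shift and symmetry, appear in Table~\ref{table_csadm}; in particular $n\le 9$ and $\deg X\ge 3$. Combined with Theorem~\ref{theorem_HP}, which matches the self-intersections with those of a toric surface, this restricts $X$ to a weak del Pezzo surface whose degree and $(-2)$-configuration are drawn from an explicit short list. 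For each such candidate type one then constructs (or shows non-existence of) a toric system realizing the required sequence and checks the slo conditions on all cyclic partial sums exactly as above.

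The main obstacle is the second half: ruling out those weak del Pezzo types of degree $\ge 3$ that do not appear in Table~\ref{table_yes}. For each such omitted type $X_{d,\Gamma,m}$ one must show that any toric system with an admissible $A^2$-sequence has some cyclic partial sum of square $-2$ that is forced to coincide (numerically) with a positive root of the $(-2)$-curve configuration $\Gamma$, and is therefore effective. Proposition~\ref{lemma_removeR} is crucial here: it lets one absorb a $(-2)$-curve $C$ with $C\cdot D=1$ into $D$ without affecting (strong) left-orthogonality, so by iterated contraction one reduces to the del Pezzo case or to a surface of smaller degree, turning the problem into a finite combinatorial search over configurations of simple roots.

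Finally, the tilting bundle assertion is immediate: once $\mathcal{A}$ is cyclic strong exceptional, the line bundles $E_0=\O_X,\ E_i=\O_X(A_1+\cdots+A_i)$ form a cyclic strong exceptional collection, and by Theorem~\ref{th_checkonlyminustwo} and Proposition~\ref{prop_straightforward} the direct sum $T=\bigoplus_i E_i$ is a tilting bundle on $X$; its $2$-hereditary property is then supplied by the equivalence of Theorem~1.8 (specifically the implication cyclic strong $\Leftrightarrow$ $2$-hereditary proved in Propositions~\ref{prop_csadm} style above in Section~8).
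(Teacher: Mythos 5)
The paper offers no argument of its own here: its entire ``proof'' is the citation to \cite{EXZ}, Proposition 15.3. Your outline is therefore a reconstruction rather than a comparison, but it correctly assembles the toolkit that this paper imports from \cite{EXZ} for precisely this purpose: Theorem~\ref{th_checkonlyminustwo}(4) together with Corollary~\ref{cor_effslo} reduces the positive direction to a finite check that no cyclic partial sum of square $-2$ is an effective or anti-effective $(-2)$-class, and effectiveness of a $(-2)$-class on a weak del Pezzo surface is indeed read off from the positive roots of the configuration of irreducible $(-2)$-curves, as recalled in Section~3. For the converse, Proposition~\ref{prop_csadm} and Proposition~\ref{lemma_removeR} are the right levers. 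In spirit this is the intended route.

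There is, however, one genuine gap in your converse. You assert that Proposition~\ref{prop_csadm} ``combined with Theorem~\ref{theorem_HP}'' restricts $X$ to a weak del Pezzo surface of degree $\ge 3$. Theorem~\ref{theorem_HP} only produces an auxiliary toric surface $Y$ whose torus-invariant divisors have the same self-intersections; it says nothing about $-K_X$ being nef and big on $X$ itself, nor even that $X$ is rational. The statement that a surface carrying a cyclic strong exceptional collection of maximal length must be a weak del Pezzo surface of degree $\ge 3$ is a separate, nontrivial theorem of \cite{EXZ} (quoted in the introduction of this paper and invoked as such in the proofs of Propositions 8.3 and 8.4); without citing it, none of the weak-del-Pezzo machinery (Proposition~\ref{prop_loslo}, Theorem~\ref{th_checkonlyminustwo}) is available for an arbitrary $X$. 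Beyond that, the ``only'' half --- excluding every omitted type in degrees $3$, $4$, $5$, for instance $A_3$ and $A_4$ in degree $5$, the $D_4$, $D_5$ and $A_4$ configurations in degree $4$, and all degree-$3$ types containing a component other than $A_1$ or $A_2$ --- is where essentially all of the content lies, and your proposal only gestures at it. Note also that the table contains $X_{4,A_3,4}$ but not $X_{4,A_3,5}$: the verification depends on the embedding of the $(-2)$-configuration into $\Pic X$ (equivalently on the full type, including the number of $(-1)$-curves), not merely on its Dynkin diagram, so a case analysis organized only by $\Gamma$ would be insufficient. The final assertion about the tilting bundle is fine once one adds that on a weak del Pezzo surface of degree $\ge 2$ an exceptional collection of maximal length is automatically full (stated in Section~4 via \cite{KU}), which is what gives generation and hence the tilting property.
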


\begin{proof}
Check Proposition $15.3$ in \cite{EXZ} for details.
\end{proof}

Before stating the next corollary, let us introduce a notion: $C(X):=\bigoplus_{r\geq 0}H^0(X,O(-rK_X))$. It is called the cone of a weak del Pezzo surface $X$. There is a natural map from $Tot(\omega_X)$ to $C(X)$ by contracting the zero section. Note that $Tot(\omega_X)$ has a trivial canonical line bundle and it is a crepant resolution of $C(X)$. 

The classification in Proposition $8.7$ gives a direct application in constructing noncommutative crepant resolutions in the sense of Van den Berg \cite[Definition 4.1]{Van}:

\begin{corollary}
Let $X$ be a weak del Pezzo surface admitting a cyclic strong exceptional collection of line bundles $\mathbb{E}:=(E_0,\ldots,E_n)$. If $T$ is the corresponding tilting bundle. Then:\begin{enumerate}
    \item $D^b(Tot(\omega_X))$ is equivalent to $D^b(A)$, where $A=End(\pi^*(T))$ and $\pi: Tot(\omega_X)\rightarrow X$ is the projection.
    \item $A$ is noncommutative crepant resolution of $C(X)$. 
    
\end{enumerate}
\end{corollary}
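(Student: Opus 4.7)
The first assertion is essentially formal, combining Theorem~1.8 with the standard tilting equivalence. Since $\mathbb{E}$ is cyclic strong, Proposition~8.4 (the ``cyclic strong $\iff$ pull back'' part of Theorem~1.8) implies that $\pi^*(T) = \bigoplus_i \pi^*(E_i)$ is a tilting bundle on $Tot(\omega_X)$. Because $Tot(\omega_X)$ is smooth and quasi-projective, the Bondal--Van den Bergh tilting theorem then yields a triangulated equivalence $D^b(Tot(\omega_X)) \simeq D^b(A)$ with $A = \End_{Tot(\omega_X)}(\pi^*T)$, giving~(1).

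For assertion~(2), the plan is to invoke Van den Bergh's construction of noncommutative crepant resolutions from tilting bundles on crepant resolutions. The key geometric input is that the contraction $\rho: Tot(\omega_X) \to C(X)$ of the zero section is a crepant resolution of a Gorenstein $3$-fold singularity: $Tot(\omega_X)$ is smooth with trivial canonical bundle, and $\rho_*\O_{Tot(\omega_X)} = \bigoplus_{i\ge 0} H^0(X,\omega_X^{-i}) = C(X)$ by the very definition of the cone. Combined with the projection formula, this yields
\[
A \;=\; \End_{Tot(\omega_X)}(\pi^*T) \;=\; \bigoplus_{i,j}\bigoplus_{k\ge 0}\Hom_X\bigl(E_i,\, E_j \otimes \omega_X^{-k}\bigr),
\]
which is visibly a module-finite graded $C(X)$-algebra.

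It then remains to check the three conditions in \cite[Definition~4.1]{Van}: $A$ is module-finite over the center $C(X)$ (just observed); $A$ has finite global dimension (immediate from the derived equivalence~(1) together with smoothness of $Tot(\omega_X)$); and $A$ is a maximal Cohen--Macaulay $C(X)$-module. The third condition, which I expect to be the main technical point, is most cleanly obtained by pushing the locally free sheaf $\EEnd_{Tot(\omega_X)}(\pi^*T)$ down through the crepant resolution $\rho$ and applying Grothendieck duality with $\omega_{Tot(\omega_X)} \cong \O$; alternatively, one may appeal to Van den Bergh's general theorem in its packaged form, which asserts that any tilting bundle on a crepant resolution of a Gorenstein singularity produces an NCCR of the base, thereby subsuming all three conditions into a single reference. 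Normality and the Gorenstein property of $C(X)$ follow from the weak del Pezzo hypothesis (with $K_X^2 \ge 3$ forced by Proposition~8.7) combined with the triviality of $\omega_{Tot(\omega_X)}$ and the equality $R\rho_*\O_{Tot(\omega_X)} = \O_{C(X)}$.
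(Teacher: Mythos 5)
Your proposal is correct and follows essentially the same route as the paper: for part (1) the paper likewise uses Proposition~8.4 to conclude that $\pi^*(T)$ is tilting on $Tot(\omega_X)$ and then invokes the standard tilting equivalence, and for part (2) the paper simply cites the proof of \cite[Proposition 7.2]{Van}, whose content (crepancy of $Tot(\omega_X)\to C(X)$ via triviality of $\omega_{Tot(\omega_X)}$ and $R\rho_*\O=\O$, plus verification of the NCCR conditions) is exactly what you spell out in more detail.
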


\begin{proof}

\begin{enumerate}
    \item $T$ is a tilting bundle formed by cyclic strong exceptional collection of line bundles, by Proposition $8.4$ $\pi^*(T)$ is a tilting bundle on $Tot(\omega_X)$, then the statement follows from Rickardson's result.
    \item It directly follows from the proof of \cite[Proposition 7.2]{Van}.
    \end{enumerate}
\end{proof}

\section{Torsion Exceptional Sheaves on Weak del Pezzo surfaces}
Let $X$ be a smooth projective surface over an algebraically closed field $k$ of characteristic $0$. Let $D^b(X)$ be bounded derived category of coherent sheaves on $X$. We say an object $M\in D^b(X)$ is spherical if we have $M\otimes\omega_X\cong M$ and $Hom(M,M[i])=0$ for $i=1$, $Hom(M,M[i])\cong k$ for $i=0,2$. We consider the mapping cone: 
$$C=Cone(\pi_1^*M^{\vee}\otimes\pi_2^*M\rightarrow O_{\Delta})$$
where $\Delta\subset X\times X$ is the diagonal and $\pi_i$ is the projection from $X\times X$ to the $i$-th factor. Then the integral functor $T_M:=\Phi^C_{X\rightarrow X}$ defined an auto-equivalence of $D^b(X)$, called the twisted functor associated to spherical object $M$. By definition, for $N\in D^b(X)$, we have an exact triangle:
$$\mathbb{R}Hom(M,N)\otimes M\xrightarrow{ev}N\rightarrow T_M(N) $$

We are ready to prove the following statement:
\begin{theorem}
Let $X$ be a weak del Pezzo surface such that $K_X^2\geq 2$, let $\mathcal{E}$ be torsion exceptional sheaf of the form $O_D$, where $D$ is subscheme of codimension $1$, not neccesarily reduced. Then there exists a series of spherical twist functors associated to line bundles on $(-2)$-curves: $T_i\in Auteq(D(X))$ and an $(-1)$-curve $D$ on $X$, such that $T_n\circ T_{n-1}\circ\ldots\circ T_1 (O_D)=O_E$.
\end{theorem}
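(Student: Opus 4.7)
The plan is to proceed by induction on a complexity measure of $D$ --- for instance the number of irreducible components of $\Supp(\O_D)$ weighted by multiplicities --- and to reduce this complexity one step at a time by applying spherical twists along line bundles on individual $(-2)$-curves, in the spirit of the Cao--Jiang argument of Theorem~$1.11$ but adapted to the non-type-$A$ setting and the non-reduced case.

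I would begin by invoking Kuleshov's classification (Theorem~$1.10$) to restrict the possible shape of $\O_D$. Since $\O_D$ is torsion and exceptional, it must lie in case~$3$ or case~$4$ of that list: either $\O_D = \O_E$ for a $(-1)$-curve $E$ (which serves as the base case, taking zero twists), or $\Supp(\O_D)$ consists of exactly one $(-1)$-curve $E$ together with some $(-2)$-curves $C_1, \ldots, C_m$. Case~$2$ is to be excluded by a direct argument: an exceptional structure sheaf whose support lies entirely in $(-2)$-curves would force a contradiction between $\Hom(\O_D,\O_D)=\k$, connectedness of $D$, and the $\Ext^2$-computation on a chain of $(-2)$-curves on a weak del Pezzo surface with $K_X^2 \ge 2$. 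The bound $K_X^2 \ge 2$ ensures, via the classification of weak del Pezzo surfaces in Section~\ref{section_wdp}, that the possible configurations of $(-2)$-curves are drawn from a finite, explicit list.

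For the inductive step, given $\O_D$ with support containing at least one $(-2)$-curve, I would choose a ``leaf'' $(-2)$-curve $C$ appearing in $D$, namely one which meets $D-C$ in as few points as possible, and form the spherical twist $T_{\O_C(d)}(\O_D)$ for a carefully chosen $d$. The computation of $\mathrm{R}\Hom(\O_C(d),\O_D)$ proceeds via the short exact sequence
\[
0 \to \O_{D-C}(-C) \to \O_D \to \O_C \to 0
\]
(together with its refinement when $C$ appears in $D$ with multiplicity greater than one). Since $K_X\cdot C=0$, the object $\O_C(d)$ is spherical on $X$ for every $d$, and $\Ext$-groups between $\O_C(d)$ and the structure sheaf of a transverse neighbour are controlled by intersection numbers. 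The twist $d$ is chosen so that $\mathrm{R}\Hom(\O_C(d),\O_D) \cong \k$ concentrated in a single degree; the resulting distinguished triangle
\[
\mathrm{R}\Hom(\O_C(d),\O_D) \otimes \O_C(d) \to \O_D \to T_{\O_C(d)}(\O_D) \to [1]
\]
should then identify $T_{\O_C(d)}(\O_D)$ with the structure sheaf $\O_{D'}$ of a strictly smaller codimension-$1$ subscheme $D' \subset D$. Iterating peels off the $(-2)$-components one by one until only the $(-1)$-curve $E$ remains.

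The main obstacle is precisely this last identification: verifying that the cone of the evaluation map is again the structure sheaf of a subscheme, rather than a two-term complex with unwanted cohomology sheaves. This requires tight control of the $\Ext$-groups between $\O_C(d)$ and the components of $D$ adjacent to $C$, as well as, in the non-reduced case, control over the scheme structure of $D$ along $C$. A natural preliminary step is to show that exceptionality of $\O_D$ forces $D$ to be reduced --- or, failing that, to include a separate twist procedure to peel off nilpotent thickenings along each $(-2)$-curve before touching its reduced components. Once the problem is reduced to the case of reduced $D$, the argument becomes largely combinatorial in the dual graph of $D$, and the finite list of possible $(-2)$-configurations for $K_X^2 \ge 2$ (Table~\ref{table_root}) makes a case-by-case verification feasible.
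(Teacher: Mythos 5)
Your overall strategy (peel off $(-2)$-curves one at a time by spherical twists, with a single $(-1)$-curve as the base case) matches the paper's, but your execution differs in a way that leaves the hardest step unproved. You propose to compute $\mathrm{R}\Hom(\O_C(d),\O_D)$ directly from a filtration of the torsion sheaf and then identify the cone of the evaluation map with $\O_{D'}$ for a smaller subscheme; you yourself flag this identification as ``the main obstacle,'' and the proposal never resolves it. That is a genuine gap: a priori the cone of the evaluation map is a two-term complex, and controlling its cohomology sheaves together with the scheme structure of $D$ along $C$ is exactly where the Cao--Jiang-style computation becomes delicate (and is why their result is restricted to type $A$). Moreover, your suggested remedy --- first proving that exceptionality forces $D$ to be reduced --- is false: the divisor underlying a torsion exceptional structure sheaf has the form $E+\sum k_iC_i$ with multiplicities $k_i$ possibly greater than $1$, which is precisely why the theorem is stated for non-reduced $D$.

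The paper avoids this obstacle entirely by never twisting the torsion sheaf directly. Lemma 9.2 shows that $\O_D$ torsion exceptional forces $D$ to be a strong left-orthogonal divisor with $D^2=-1$, sitting in the structure sequence $0\to\O_X(-D)\to\O_X\to\O_D\to 0$. If some irreducible $(-2)$-curve $C_1$ satisfies $C_1\cdot D=-1$, then $(-D)\cdot C_1=1$, and Hochenegger's lemma (Lemma 9.3) says the spherical twist $T_{\O_{C_1}(-1)}$ sends the line bundle $\O_X(-D)$ to the line bundle $\O_X(-D+C_1)$ while fixing $\O_X$; applying this equivalence to the whole structure sequence identifies $T_{\O_{C_1}(-1)}(\O_D)$ with $\O_{D-C_1}$ with no cone analysis at all, and Proposition 3.4 guarantees $D-C_1$ is again strong left-orthogonal with square $-1$. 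The process terminates because once $D\cdot C\ge 0$ for every $(-2)$-curve $C$, the class $D$ is an irreducible $(-1)$-curve. If you wish to salvage your route you must carry out the cone identification with full multiplicity bookkeeping; the reduction to line bundles via the structure sequence is the idea you are missing, and it also handles the non-reduced case for free.
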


The next lemma gives a one to one correspondence between torsion exceptional sheaf $O_D$ and $(-1)$-slo divisor $D$: 

\begin{lemma}
Let $D$ be a strong left-orthogonal divisor with $D^2=-1$ on weak del Pezzo surface $X$ with $K_X^2\geq 2$. Let $(O_X(-D),O_X)$ be an exceptional collection of line bundles of length $2$(it is called exceptional pair). Then the right mutation of the exceptional pair $(O_X(-D),O_X)$ gives exceptional pair $(O_X,O_D)$. Conversely, if $\mathcal{E}$ is a torsion exceptional sheaf of the form $O_D$, then $D$ is a slo divisor with $D^2=-1$. 
\end{lemma}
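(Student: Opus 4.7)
My plan is to handle the two directions separately, in each case exploiting the short exact sequence
$0 \to O_X(-D) \to O_X \to O_D \to 0$ attached to the effective divisor $D$.

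For the forward direction, assume $D$ is slo with $D^2 = -1$. Numerical left-orthogonality gives $D \cdot K_X = -1$, and Riemann-Roch yields $\chi(D) = D^2 + 2 = 1$; combined with the slo vanishings $h^i(D) = 0$ for $i \ge 1$, this forces $h^0(D) = 1$. Hence $R\Hom(O_X(-D), O_X) = H^{*}(X, O_X(D)) = k$, concentrated in degree zero. By the definition of right mutation, $R_{O_X} O_X(-D)$ is the cone of the evaluation map $O_X(-D) \to O_X \otimes R\Hom(O_X(-D), O_X)^{\vee} \cong O_X$; since $h^0(O_X(D)) = 1$, this map is (a nonzero scalar multiple of) the tautological inclusion determined by the effective divisor $D$, whose cone is $O_D$. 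Thus the right mutation produces the pair $(O_X, O_D)$.

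For the converse, assume $O_D$ is an exceptional torsion sheaf, so $D$ is a nonzero effective divisor. Additivity of the Euler pairing on the two-term resolution gives $\chi(O_D, O_D) = \chi(O_D) - \chi(O_D(D)) = -D^2$; exceptionality then forces $\chi(O_D, O_D) = 1$, so $D^2 = -1$. To determine $D \cdot K_X$, combine the Riemann-Roch formula $\chi(O_D) = -D(D+K_X)/2 = (1 - D \cdot K_X)/2$ with the identity $h^0(O_D) = \dim \End(O_D) = 1$ (which follows from $\Hom(O_D, O_D) = k$) to obtain $h^1(O_D) = (1 + D \cdot K_X)/2$. Non-negativity and integrality of this quantity force $D \cdot K_X$ to be an odd integer $\ge -1$; since $-K_X$ is nef and $D$ is effective, also $D \cdot K_X \le 0$, so necessarily $D \cdot K_X = -1$, and $D$ is numerically left-orthogonal.

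Finally, the slo vanishings $h^i(-D) = 0$ for all $i$ and $h^i(D) = 0$ for $i \ne 0$ are read off from the long exact sequences obtained by tensoring the short exact sequence with $O_X(\pm D)$, using $h^{*}(O_X) = (1,0,0)$, $h^0(O_D) = 1$, $h^1(O_D) = 0$, and $h^0(O_X(-D)) = 0$ (which follows from $D$ being nonzero and effective). The main delicate step is the parity argument pinning down $D \cdot K_X = -1$ in the converse direction; the mutation identification in the forward direction and the surrounding cohomological bookkeeping are then purely formal.
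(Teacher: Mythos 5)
Your forward direction is the same as the paper's: compute $h^0(D)=D^2+2=1$ and the slo vanishings to see that $R\Hom(O_X(-D),O_X)=k$ in degree $0$, so the mutation is the cone of the canonical injection $O_X(-D)\hookrightarrow O_X$, namely $O_D$. Your converse, however, is a genuinely different and more self-contained route. The paper gets $D^2=-1$ from Riemann--Roch and then cites Cao--Jiang (Lemma 2.9, Lemma 5.1 of \cite{CJ}) to identify the support of $O_D$ as one $(-1)$-curve plus $(-2)$-curves, which yields $D\cdot K_X=-1$; it then concludes that the effective $(-1)$-class $D$ is slo by invoking Proposition 3.2 (this is where $K_X^2\ge 2$ enters, via the row $-1\le r\le d-3$ of the table). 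Your parity argument --- $h^1(O_D)=(1+D\cdot K_X)/2$ must be a non-negative integer while $D\cdot K_X\le 0$ by nefness of $-K_X$ --- replaces the external citation entirely and is cleaner; it also makes the degree hypothesis essentially invisible.

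There is one under-justified step at the end. To get $h^1(O_X(D))=0$ from the twisted sequence $0\to O_X\to O_X(D)\to O_D(D)\to 0$ you need $h^1(O_D(D))=0$, and this is not among the facts you list ($h^{*}(O_X)$, $h^0(O_D)$, $h^1(O_D)$, $h^0(O_X(-D))$ only control the untwisted sequence). It is true, but you must use exceptionality of $O_D$ beyond $\hom(O_D,O_D)=1$: applying $\Hom(-,O_D)$ to the resolution gives $H^1(O_D)\to H^1(O_D(D))\to \Ext^2(O_D,O_D)$ with outer terms zero (the first by your parity computation, the second by exceptionality), hence $h^1(O_D(D))=0$; alternatively, once you know $D$ is an effective numerically left-orthogonal class with $D^2=-1$, you can simply quote Proposition 3.2 as the paper does, which is where $K_X^2\ge 2$ is actually used. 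With that one line added, your argument is complete.
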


\begin{proof}
$D$ is slo divisor, then $D$ is connected and $h^0(O_X(D))=D^2+2=1$. $Hom_{O_X}(O_D,O_D)\cong Hom_{O_D}(O_D,O_D)\cong\mathbb{C}$. Note that $Ext^i(O_X(-D),O_X)=0,i\geq 1, Hom(O_X(-D),O_X)=\mathbb{C}$ because $D$ is slo with $D^2=-1$. Thus the right mutation of the exceptional pair $(O_X(-D),O_X)$ is just $(O_X,R_{O_X(-D)}(O_X))$. Consider standard exact sequence:
$$0\rightarrow O_X(-D)\xrightarrow{j} O_X\rightarrow O_D\rightarrow 0$$
$j$ is injective, thus $R_{O_X(-D)}(O_X)\cong coker(j)\cong O_D$ by \cite[Section 3.1]{KU}. Thus $(O_X,O_D)$ is an exceptional pair and $O_D$ is exceptional sheaf. $Supp(D)$ is a tree of projective lines since $D$ is an effective lo divisors. $X$ is weak del Pezzo surface, $C^2\geq -2$ for all smooth rational curves $C$. By \cite[Corollary 6.1]{E}, Effective lo divisor $D$ with $D^2=-1$ means that $D$ is a tree of one $(-1)$-curve and $(-2)$-curves(it is possible that there is no $(-2)$-curve). By the classification of exceptional sheaves on surface with nef anticanonical bundle in [Ku], $O_D$ is a torsion exceptional sheaf. Conversely, let $O_D$ be a torsion exceptional sheaf. Thus $c_1(O_D)^2=D^2=-1$ by Hirzebruch-Riemann-Roch theorem. $Supp(O_D)$ is connected and consists of only one $(-1)$-curve and $(-2)$-curves on weak del Pezzo surfaces by \cite[Lemma 2.9, Lemma 5.1]{CJ}. Thus $D.K_X=-1$, hence $D^2+D.K_X=-2$. This means that $D$ is a $(-1)$-class on weak del pezzo surface. By Proposition $3.2$, it is a slo divisor with $D^2=-1$.
\end{proof}

The next lemma belongs to \cite[Lemma 1]{H}
\begin{lemma}
Let $X$ be a smooth projective surface and $C$ is a $(-2)$-curve on $X$. Then the spherical twist $T_{O_C(-1)}$ maps a line bundle $L=O(D)$ to another line bundle $L'$ if and only if one of the following two cases apply\begin{enumerate}
   \item $D.C=0$, in which case $L$ and $L'$ are equal, or 
   \item $D.C=1$, in which case $L'=L\otimes O(C)=O(D+C)$.
\end{enumerate}
\end{lemma}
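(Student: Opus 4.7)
The plan is to compute the spherical twist directly from its defining triangle
$$\mathbb{R}\mathrm{Hom}_X(O_C(-1), L) \otimes O_C(-1) \xrightarrow{\mathrm{ev}} L \to T_{O_C(-1)}(L),$$
via an Ext calculation on the $(-2)$-curve $C$, and to identify the outcome with the claimed extension in the two positive cases.

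The first step is to compute $\mathbb{R}\mathrm{Hom}_X(O_C(-1), O_X(D))$. Since $C$ is a $(-2)$-curve, adjunction gives $\omega_X|_C \cong O_C$; applying Serre duality on $X$ together with $O_X(-D)|_C \cong O_{\mathbb{P}^1}(-n)$, where $n := D \cdot C$, yields
$$\mathbb{R}\mathrm{Hom}_X(O_C(-1), O_X(D)) \cong \mathbb{R}\Gamma(\mathbb{P}^1, O(-1-n))^\vee[-2].$$
Cohomology on $\mathbb{P}^1$ then shows the Ext complex vanishes for $n = 0$, equals $k[-1]$ for $n = 1$, equals $k^n[-1]$ for $n \geq 2$, and equals $k^{|n|}[-2]$ for $n \leq -1$.

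For the positive direction I dispatch the two cases. When $n = 0$ all Exts vanish and the triangle collapses to $T_{O_C(-1)}(L) = L$. When $n = 1$, the triangle has the form $O_C(-1)[-1] \to L \to T_{O_C(-1)}(L)$. Twisting the structure sequence $0 \to O_X(-C) \to O_X \to O_C \to 0$ by $O_X(D+C)$ and using $(D+C)\cdot C = n - 2 = -1$ gives the nonsplit short exact sequence
$$0 \to O_X(D) \to O_X(D+C) \to O_C(-1) \to 0,$$
whose extension class is a generator of the one-dimensional $\mathrm{Ext}^1(O_C(-1), L)$. Since the evaluation map $\mathrm{ev}$ corresponds to the identity under the tautological self-adjunction of $\mathbb{R}\mathrm{Hom}(O_C(-1), -)$ and is therefore nonzero, its class is also a generator; the two triangles agree up to scalar, giving $T_{O_C(-1)}(L) \cong O_X(D+C)$.

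For the converse, I must rule out all remaining values of $n$. When $n \leq -1$, taking cohomology sheaves of the triangle $O_C(-1)^{|n|}[-2] \to O_X(D) \to T_{O_C(-1)}(L)$ gives $H^0 = O_X(D)$ and $H^1 = O_C(-1)^{|n|} \neq 0$, so the output is not even a sheaf. When $n \geq 2$, the cone is the genuine sheaf extension $0 \to O_X(D) \to T_{O_C(-1)}(L) \to O_C(-1)^n \to 0$; were it a line bundle $O_X(D')$, the inclusion of $O_X(D)$ would force $D' = D + kC$ with $k \geq 1$, and the quotient $O_{kC}(D + kC)$ has a filtration by the line bundles $O_C(n - 2k + 2i)$, $i = 0, \ldots, k-1$, an arithmetic progression of length $k$ with common difference $2$ that never matches the direct sum $O_C(-1)^n$. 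Hence $T_{O_C(-1)}(L)$ is a line bundle only in the two cases listed. The main (minor) technical point is the generator identification in the $n = 1$ case, which is settled by the tautological nonvanishing of $\mathrm{ev}$; everything else reduces to cohomology on $\mathbb{P}^1$ and bookkeeping of the long exact sequence of the triangle.
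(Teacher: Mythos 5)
The paper does not actually prove this lemma: it is quoted from Hochenegger \cite[Lemma 1]{H} and used as a black box, so there is no in-paper argument to compare against. Your proof is a correct, self-contained verification along the standard lines: compute $\mathbb{R}\mathrm{Hom}(O_C(-1),O(D))$ by Serre duality and restriction to the $(-2)$-curve (using $\omega_X|_C\cong O_C$), then identify the cone of the evaluation map case by case in $n=D\cdot C$. The two positive cases are handled cleanly; in particular, recognizing that the evaluation map is adjoint to the identity and hence realizes a generator of the one-dimensional $\mathrm{Ext}^1(O_C(-1),L)$, which is also the class of the non-split sequence $0\to O(D)\to O(D+C)\to O_C(-1)\to 0$, is exactly the right point, and the $n\le -1$ exclusion via the two nonvanishing cohomology sheaves is correct.

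One step in the exclusion of $n\ge 2$ should be tightened. The assertion that the degree sequence $n-2,n-4,\dots,n-2k$ of the graded pieces of $O_{kC}(D+kC)$ ``never matches the direct sum $O_C(-1)^n$'' is not by itself conclusive: $O_C(-1)^n$ does admit filtrations whose graded pieces are line bundles of varying degrees (for instance $O_C(-2)\subset O_C(-1)^{2}$ with quotient $O_C$), so comparing graded pieces of some filtration does not immediately rule out an isomorphism. The conclusion is nevertheless correct and the gap closes in one line: for $k\ge 2$ the quotient $O_{kC}(D+kC)$ is a line bundle on the non-reduced scheme $kC$, hence is not annihilated by the ideal sheaf $I_C$ and cannot be isomorphic to the $O_C$-module $O_C(-1)^n$; for $k=1$ the quotient is the single line bundle $O_C(n-2)$, which has rank one on $C$ rather than $n\ge 2$. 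With that patch the argument is complete.
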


\begin{proof}(Proof of Theorem $9.1$ or $1.12$)
Let $O_D$ be a torsion exceptional sheaf on $X$. then by Lemma $9.2$, $D$ is a slo divisor with $D^2=-1$. By \cite[Corollary 6.1]{E}, $D=E+\sum_{i=1}^nk_iC_i$, where $E$ is a $(-1)$-curve and $C_i$ is a $(-2)$-curve and the underlying dual graph of $D$ is a tree. Note that for any $(-2)$-curve $C_i\subset D$, $-1\leq C_i.D\leq 1$ and if $D.C_i\geq 0$ for all $(-2)$-curves $C_i$, then $D$ is already a $(-1)$-curve by \cite{Do}. 
If $C_1.D=-1$, let $D_1=D-C_1$. Then $D_1^2=-1$ and by Proposition $3.4$, $D_1$ is a $(-1)$-slo divisor. $O_{C_1}(-1)$ is a spherical object in $D^b(X)$. Consider standard exact sequence:
$$0\rightarrow O_X(-D)\rightarrow O_X\rightarrow O_D\rightarrow 0 $$
Apply the spherical twist functor $T_1=T_{O_{C_1}(-1)}$ to the exact sequence and note that $(-D).C_1=1$. By Lemma $9.3$, we get an exact sequence:
$$0\rightarrow T_{O_{C_1}(-1)}(O(-D))\rightarrow T_{O_{C_1}(-1)}(O_X)\rightarrow T_{O_{C_1}(-1)}(O_D)\rightarrow 0 $$
i.e:
$$ 0\rightarrow O(-D_1)\rightarrow O_X\rightarrow T_{O_{C_1}(-1)}(O_D)\rightarrow 0 $$

Thus, we have $T_{O_{C_1}(-1)}(O_D)\cong O_{D_1}$. 

If $C_2.D_1=-1$, again, let $D_2=D_1-C_2$. Then $D_2$ is still a $(-1)$-slo divisor and $(-D_1).C_2=1$. Applying the spherical functor $T_{O_{C_2}(-1)}$ to the exact sequence above, we get:
$$ 0\rightarrow O(-D_2)\rightarrow O_X\rightarrow T_{O_{C_2}(-1)}\circ T_{O_{C_1}(-1)}(O_D)\rightarrow 0 $$

Thus $T_{O_{C_2}(-1)}\circ T_{O_{C_1}(-1)}(O_D)\cong O_{D_2}$ 

Continuing  in this way, it will terminate after finitely many steps since there are finitely many $(-1)$-classes on $X$. 
Eventually, we get the following exact sequence:
$$ 0\rightarrow O(-E)\rightarrow O_X\rightarrow T_{O_{C_n}(-1)}\circ\ldots\circ T_{O_{C_2}(-1)}\circ T_{O_{C_1}(-1)}(O_D)\rightarrow 0 $$

Hence, $T_{O_{C_n}(-1)}\circ\ldots\circ T_{O_{C_2}(-1)}\circ T_{O_{C_1}(-1)}(O_D)\cong O_E$, 
where $E$ is an irreducible $(-1)$-curve. 
\end{proof}






\end{document}